\newtheorem{theorem}{Theorem}[section]
\newtheorem*{theorem*}{Theorem}
\newtheorem{corollary}[theorem]{Corollary}
\newtheorem{lemma}[theorem]{Lemma}
\newtheorem{rem}[theorem]{Remark}
\newtheorem{proposition}[theorem]{Proposition}
\newtheorem{example}{Example}[section]
\theoremstyle{definition}
\newtheorem{definition}[theorem]{Definition}
\newcommand{\nn}{\mathbb{N}}
\newcommand{\ee}{\varepsilon}
\newcommand{\upp}{\upharpoonright}
\begin{document}

\title[Uniform subsequential estimates]{Uniform subsequential estimates on weakly null sequences}

\author{M. Brixey}
\address{Department of Mathematics, Miami University, Oxford, OH 45056, USA}
\email{brixeym@miamioh.edu}

\author{R.M. Causey}
\address{Department of Mathematics, Miami University, Oxford, OH 45056, USA}
\email{causeyrm@miamioh.edu}

\author{P. Frankart}
\address{Department of Mathematics, Miami University, Oxford, OH 45056, USA}
\email{frankapa@miamioh.edu}

\begin{abstract} We provide a generalization of two results of Knaust and Odell from \cite{KO2} and \cite{KO}. We prove that if $X$ is a Banach space and $(g_n)_{n=1}^\infty$ is a right dominant Schauder basis such that every normalized, weakly null sequence in $X$ admits a subsequence dominated by a subsequence of $(g_n)_{n=1}^\infty$, then there exists a constant $C$ such that every normalized, weakly null sequence in $X$ admits a subsequence $C$-dominated by a subsequence of $(g_n)_{n=1}^\infty$. We also prove that if every spreading model generated by a normalized, weakly null sequence in $X$ is dominated by some spreading model generated by a subsequence of $(g_n)_{n=1}^\infty$, then there exists $C$ such that every spreading model generated by a normalized, weakly null sequence in $X$ is $C$-dominated by every spreading model generated by a subsequence of $(g_n)_{n=1}^\infty$. We also prove a single, ordinal-quantified result which unifies and interpolates between these two results. 

\end{abstract}

\thanks{2010 \textit{Mathematics Subject Classification}. Primary: 46B03, Secondary: 46B15}
\thanks{\textit{Key words}: Banach spaces, Ramsey theory}

\maketitle

\section{Introduction}

The Principle of Uniform Boundedness is one of the cornerstone theorems of Banach space theory.  When they are available, uniform estimates are highly desirable.  In certain situations, the prototypical example of which is the Principle of Uniform Boundedness,   a certain, non-uniform bound automatically implies a uniform one. Several instances of such a phenomenon have appeared (\cite{KO2,KO, F}), in which  the estimates take the form of domination of subsequences of normalized, weakly null sequences by a fixed basis Let us recall that for sequences $(e_n)_{n=1}^\infty$, $(f_n)_{n=1}^\infty$ in possibly different Banach spaces, we say that  $(f_n)_{n=1}^\infty$ $C$-\emph{dominates} $(e_n)_{n=1}^\infty$ if for any finitely supported scalar sequence $(a_n)_{n=1}^\infty$, $$\|\sum_{n=1}^\infty a_ne_n\|\leqslant C\|\sum_{n=1}^\infty a_nf_n\|.$$ We say that $(f_n)_{n=1}^\infty$ \emph{dominates} $(e_n)_{n=1}^\infty$ if $(f_n)_{n=1}^\infty$ $C$-dominates $(e_n)_{n=1}^\infty$ for some $C$. 

The following result was shown for $c_0$ in \cite{KO2} and for $\ell_p$, $1<p<\infty$, in  \cite{KO}. 

\begin{theorem} Fix $1<p<\infty$. If $X$ is a Banach space in which every normalized, weakly null sequence admits a subsequence dominated by the $\ell_p$ basis, then there exists a constant $C$ such that every normalized, weakly null sequence in $X$ has a subsequence $C$-dominated by the $\ell_p$ basis. The analogous result holds if $\ell_p$ is replaced by $c_0$. 

\label{KO}
\end{theorem}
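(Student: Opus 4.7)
The plan is to argue by contradiction. Suppose no uniform constant $C$ exists for which the conclusion holds. Then for each integer $n\geqslant 1$, one can select a normalized, weakly null sequence $(x_k^{(n)})_{k=1}^{\infty}\subset X$ such that no subsequence is $n$-dominated by the $\ell_p$-basis $(e_k)$. Exploiting the subsymmetry of $(e_k)$ (for both $\ell_p$ and $c_0$, every subsequence of the basis is $1$-equivalent to the basis itself), this says: for every infinite $M\subset \nn$ there exist finitely supported scalars $(a_k)_{k\in M}$ with
$$\left\|\sum_{k\in M}a_k x_k^{(n)}\right\| > n\left(\sum_{k\in M}|a_k|^p\right)^{1/p},$$
with the obvious modification in the $c_0$ case.

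The first substantive step would be a Ramsey-theoretic stabilization. I would apply the infinite Ramsey theorem (or the Nash--Williams / Galvin--Prikry theorem) to a coloring of finite subsets of $\nn$ induced by the existence of witnessing scalar sequences above a fixed fraction of $n$. Passing to appropriate subsequences, one arranges that each family $(x_k^{(n)})_k$ is \emph{hereditarily bad}: every infinite further subsequence still admits a witness above the threshold $n/2$.

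The heart of the argument is to amalgamate these stabilized families into a single normalized, weakly null sequence $(y_j)_{j=1}^{\infty}\subset X$ with no subsequence dominated by $(e_k)$, thereby contradicting the hypothesis. I would carry this out by a diagonal construction, letting $y_j$ be drawn from the tail of the $n_j$-th stabilized sequence with $n_j\to\infty$, and using a diagonal argument against a countable total subset of $X^\ast$ (combined with the weak-nullness of each family) to guarantee weak-nullness of $(y_j)$.

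The main obstacle is verifying that every subsequence of $(y_j)$ fails to be dominated by $(e_k)$, since a subsequence might cherry-pick single elements across distinct families and the badness of individual families does not transfer directly to cross-family subsequences. Surmounting this step likely requires either an inductive preservation of witnessing functionals at every stage, or an iterated-averaging / repeated-spreading-model device in which each $y_j$ is itself a normalized linear combination (rather than a single term) designed so that scalar combinations of $(y_j)$ unfold into bad combinations within a single family. The subsymmetry of $(e_k)$ is essential here, as it permits uniform transfer of bad estimates across different indices; this also suggests why the general theorem for right-dominant bases stated in the abstract requires a more intricate argument than the $\ell_p$/$c_0$ case.
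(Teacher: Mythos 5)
Your overall strategy --- contradiction, an array of increasingly bad rows, amalgamation into a single bad weakly null sequence --- has the right shape, but the proposal stalls exactly at the step you yourself flag as the main obstacle, and the specific construction you offer there cannot work. Drawing each $y_j$ as a single vector from the tail of a different row is hopeless: the badness of a row is a property of linear combinations \emph{within} that row, and a subsequence of your diagonal sequence meets each row in at most one vector, so no bad combination survives passage to subsequences. The amalgamation actually used (by Knaust--Odell, by Freeman, and in this paper) is to sum the rows \emph{pointwise} with absolutely summable weights, $x_n=\sum_l w_l x^l_n$, so that every subsequence of the combined sequence contains a full subsequence of every row. Your fallback remark that each $y_j$ might itself be a normalized linear combination is not this construction and is not developed.

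Even granting the weighted-sum construction, the substantive content of the theorem is showing that the badness of row $k$ is not cancelled by the rows $l\neq k$, and your proposal supplies no mechanism for this. The Ramsey ``hereditary badness'' stabilization is essentially vacuous (if no subsequence of a row is $n$-dominated, the same already holds for every further subsequence) and gives no quantitative control. What is needed is a bound on $\|\sum_{n\in F}a_nx^l_{M(n)}\|$ for $l\neq k$ on the very set $F$ and scalars $(a_n)$ that witness the badness of row $k$. In this paper that is achieved by grading witnessing sets by the fine Schreier families $\mathcal{F}_\xi$: one proves the uniform bounds $\Gamma(\xi)$ are finite and subadditive below the first level $\mu=\omega^\gamma$ at which they blow up (Lemma \ref{work}, Corollary \ref{break}), chooses row $k$ so that its badness is witnessed on sets in $\mathcal{F}_{\xi_k}\cap\mathcal{F}_\mu$ with $\xi_k<\mu$ while every row is uniformly controlled on $\mathcal{F}_{\xi_l}$ for $l<k$ and on $\mathcal{F}_\mu$ (Corollary \ref{home}), and then splits the witnessing set $F$ into a short initial segment, handled by the triangle inequality, and a tail lying in the controlled families. (Knaust and Odell instead transfer the problem to a $C(K)$ space with $K$ countable and compact.) Without some such device the contradiction cannot be extracted; identifying the obstacle is not the same as surmounting it.
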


Freeman gave the following generalization.

\begin{theorem} If $(v_n)_{n=1}^\infty$ is any seminormalized Schauder basis and $X$ is some Banach space such that every normalized, weakly null sequence in $X$ admits a subsequence dominated by $(v_n)_{n=1}^\infty$, then there exists a constant $C$ such that every normalized, weakly null sequence in $X$ admits a subsequence $C$-dominated by $(v_n)_{n=1}^\infty$. 

\label{Freeman}
\end{theorem}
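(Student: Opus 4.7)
The plan is to argue by contradiction. Suppose no uniform constant $C$ exists. Then for each $k \in \nn$ there is a normalized, weakly null sequence $(x_i^{(k)})_{i=1}^\infty$ in $X$ no subsequence of which is $k$-dominated by $(v_n)$. By passing to subsequences, I may assume each $(x_i^{(k)})_{i=1}^\infty$ is a normalized basic sequence. The aim is to build from this family a single normalized, weakly null sequence $(y_n)_{n=1}^\infty$ in $X$ no subsequence of which is dominated by $(v_n)$ with any constant, contradicting the hypothesis applied to $(y_n)$.

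The construction of $(y_n)$ would proceed by a diagonal interlacing: partition $\nn$ into consecutive finite intervals $I_1<I_2<\cdots$, and on $I_k$ place a subsequence of $(x_i^{(k)})$ drawn from sufficiently late indices so that $(y_n)$ is weakly null. By choosing $|I_k|$ large enough in terms of witnesses for non-$k$-domination of $(x_i^{(k)})$, one hopes every infinite subsequence of $(y_n)$ captures enough of some block $I_k$ to furnish a finite witness against $k$-domination for arbitrarily large $k$.

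The main obstacle is a position-shift issue: when a subsequence $(y_{n_\ell})_\ell$ of $(y_n)$ is extracted and we ask whether it is $C$-dominated by $(v_n)$, the comparison uses $v_\ell$ at the outer positions $\ell$, not $v_j$ at the positions $j\in I_k$ inside the parent block. Because $(v_n)$ is only seminormalized Schauder (not unconditional or spreading), the norm of $\sum_{\ell\in F}a_\ell v_\ell$ at an arbitrarily shifted finite set $F$ of positions is not comparable to $\|\sum_{j=1}^{|F|}a_j v_j\|$. To handle this, the plan is first to apply a Galvin--Prikry stabilization to the closed sets $\{M\in[\nn]^\omega:(x_i^{(k)})_{i\in M}\text{ is }k\text{-dominated by }(v_n)\}$, obtaining subsequences of each $(x_i^{(k)})$ in which non-$k$-domination is strongly hereditary, and then to refine further with a Schreier-type combinatorial scheme chosen so that the witnesses produced by the diagonal construction remain effective at whatever outer positions they end up occupying.

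The hardest step will be this reconciliation between the Ramsey preprocessing and the diagonal interlacing, ensuring that weak nullity, uniform basic constants, and position-robust non-domination all hold simultaneously. In the $\ell_p$ and $c_0$ special cases of Theorem \ref{KO} the position-shift issue disappears because the basis is $1$-spreading, so essentially all the new difficulty in Theorem \ref{Freeman} lies in carrying out the argument using only the bare Schauder basis hypothesis on $(v_n)$.
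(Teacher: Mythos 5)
Your proposal is an outline rather than a proof, and the central construction it rests on does not work. The fatal issue is the interlacing itself: if you place a finite (or even infinite) block of terms from row $k$ on the positions $I_k$, then an infinite subsequence of $(y_n)_{n=1}^\infty$ is free to select at most one index from each block $I_k$. Such a subsequence is just a diagonal sequence $(x^{(k)}_{i_k})_{k=1}^\infty$, one term per row; it is normalized and weakly null, so the hypothesis of the theorem hands it a subsequence dominated by $(v_n)_{n=1}^\infty$, and no choice of $|I_k|$ prevents this. Your hope that ``every infinite subsequence of $(y_n)$ captures enough of some block $I_k$'' is therefore false, and the contradiction you aim for cannot be reached this way. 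This is precisely why the known proofs (Knaust--Odell for $\ell_p$ and $c_0$, Freeman in general, and the present paper's proof of Theorem \ref{good}) combine the rows \emph{vertically}: one forms $y_n=\sum_{k}w_k x^{(k)}_n$ with absolutely summable weights $w_k$, so that \emph{every} subsequence $(y_{n_j})_j$ still contains, via the triangle inequality, the weighted contribution of the corresponding subsequence of \emph{every} row. The real difficulty is then to choose the constants $C_k$, $D_k$, $w_k$ so that the badness of row $k$ is not canceled by the other rows --- in this paper that is the role of Corollary \ref{home} and the estimates $(1)$ and $(2)$ in the proof of Theorem \ref{good}.

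Separately, you correctly identify the position-shift problem for a bare Schauder basis $(v_n)_{n=1}^\infty$ (no spreading or unconditionality), but your proposed remedy --- a Galvin--Prikry stabilization followed by an unspecified ``Schreier-type combinatorial scheme'' --- is left entirely undone; you explicitly defer ``the hardest step.'' In the literature this is handled by first transporting the problem into a $C(K)$ space for $K$ countable compact Hausdorff (as the paper notes at the end of the introduction), or, in the present paper's framework, by replacing domination by a fixed basis with domination by a \emph{subsequence} of a right dominant basis, where right dominance supplies exactly the position-robustness you need. As written, your argument has two gaps, one of which (the interlacing) is not repairable without changing the construction.
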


We recall that a Schauder basis $(g_n)_{n=1}^\infty$ is \emph{right dominant} if for any two sequences $m_1<m_2<\ldots$, $l_1<l_2<\ldots$ of positive integers such that $m_n\leqslant l_n$ for all $n\in\nn$, $(g_{m_n})_{n=1}^\infty$ is dominated by $(g_{l_n})_{n=1}^\infty$.  For $1\leqslant r<\infty$, we say $(g_n)_{n=1}^\infty$ is $r$-\emph{right dominant} if for any two sequences $m_1<m_2<\ldots$, $l_1<l_2<\ldots$ of positive integers such that $m_n\leqslant l_n$ for all $n\in\nn$, $(g_{m_n})_{n=1}^\infty$ is $r$-dominated by $(g_{l_n})_{n=1}^\infty$.  A standard gliding hump argument implies that a right dominant basis is $r$-right dominant for some $1\leqslant r<\infty$.

For a fixed seminormalized Schauder basis $(v_n)_{n=1}^\infty$, and $C>0$, we say a Banach space $X$ has property  $U_{(v_n)_{n=1}^\infty}$ (resp. $C$-$U_{(v_n)_{n=1}^\infty}$) if every  weakly null sequence in $B_X$ has a subsequence dominated (resp. $C$-dominated) by $(v_n)_{n=1}^\infty$. Then Freeman's theorem can be summarized as saying that a Banach space has $U_{(v_n)_{n=1}^\infty}$ if and only if it has $C$-$U_{(v_n)_{n=1}^\infty}$ for some $C>0$.  We wish to consider the following property, which for a fixed basis is  weaker than that studied by Freeman. Given a fixed seminormalized, right dominant Schauder basis $(g_n)_{n=1}^\infty$, a Banach space $X$, and $C>0$, we say $X$ has $S_{(g_n)_{n=1}^\infty}$ (resp. $C$-$S_{(g_n)_{n=1}^\infty}$) if every weakly null sequence in $B_X$ admits a subsequence dominated (resp. $C$-dominated) by a subsequence of $(g_n)_{n=1}^\infty$.  It is easy to see that $C$-$U_{(g_n)_{n=1}^\infty}\Rightarrow C$-$S_{(g_n)_{n=1}^\infty}$ and $U_{(g_n)_{n=1}^\infty}\Rightarrow S_{(g_n)_{n=1}^\infty}$.  The first part of this paper is devoted to proving an analogue of Freeman's theorem regarding the properties $S_{(g_n)_{n=1}^\infty}$ and $C$-$S_{(g_n)_{n=1}^\infty}$ in place of $U_{(g_n)_{n=1}^\infty}$ and $C$-$U_{(g_n)_{n=1}^\infty}$, and an ordinal generalization thereof.  We devote the final section of this paper to examples which show that the reverse implications do not hold. That is, $S_{(g_n)_{n=1}^\infty}\not\Rightarrow U_{(g_n)_{n=1}^\infty}$.  More generally, we give examples of $1$-right dominant bases $(g_n)_{n=1}^\infty$ and Banach spaces $X$ such that $X$ has $S_{(g_n)_{n=1}^\infty}$, but if $X$ has $U_{(v_n)_{n=1}^\infty}$, then $(v_n)_{n=1}^\infty$ must be equivalent to the $\ell_1$ basis.

We will find it convenient to work in more generality than with normalized, weakly null sequences. Throughout, given a Banach space $X$, we will deal with a (not necessarily closed) subspace $R$ of $\ell_\infty(X)$ such that $c_{00}(X)\subset R$ and $R$ is endowed with some norm $\|\cdot\|_R$ such that, with $B_R$ denoting the unit ball of $R$ with respect to the norm $\|\cdot\|_R$,  \begin{enumerate}[(i)]\item $(R, \|\cdot\|_R)$ is a Banach space, \item any subsequence of a member of $B_R$ is a member of $B_R$, \item $B_R\subset B_{\ell_\infty(X)}$.\end{enumerate} We will call such a space $(R, \|\cdot\|_R)$ a \emph{subsequential space on} $X$.

Our main result is the following.

\begin{theorem} Let $(g_n)_{n=1}^\infty$ be a seminormalized, right dominant Schauder basis. Let $X$ be a Banach space and let $(R, \|\cdot\|_R)$ be a subsequential space on $X$.  If every member of $R$ admits a subsequence dominated by a subsequence of $(g_n)_{n=1}^\infty$, then there exists $C>0$ such that every member of $B_R$ admits a subsequence $C$-dominated by a subsequence of $(g_n)_{n=1}^\infty$. 

\label{main}
\end{theorem}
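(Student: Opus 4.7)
\emph{Proof proposal.} The plan is to argue by contradiction: suppose no uniform $C$ works, and extract from the resulting family of ``bad'' sequences a single member of $R$ that contradicts the hypothesis.

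\emph{Setup.} If the conclusion fails, for each $n \in \nn$ we may fix $(x^n_k)_{k=1}^\infty \in B_R$ such that no subsequence of $(x^n_k)_k$ is $n$-dominated by any subsequence of $(g_m)_m$. The hypothesis, applied to each $(x^n_k)_k \in R$, produces some constant $C_n$ working for it, and so necessarily $C_n > n \to \infty$. The task is to build from $\{(x^n_k)_k\}_{n \in \nn}$ a single sequence $(y_k) \in R$ that admits no dominated subsequence at all, contradicting the hypothesis.

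\emph{Key tools.} First, since $(g_n)$ is $r$-right dominant for some $r \geq 1$, the property ``$(z_k)$ admits a subsequence $C$-dominated by a subsequence of $(g_n)$'' is robust to upward index shifts: if $(z_{k_j})$ is $C$-dominated by $(g_{l_j})$, then for any $l'_j \geq l_j$, $(z_{k_j})$ is $rC$-dominated by $(g_{l'_j})$. Consequently, the failure of $n$-domination of $(x^n_k)$ is witnessed by finite ``bad words'' $(k_1 < \dots < k_p;\, a_1,\dots,a_p)$ with $\|\sum a_j x^n_{k_j}\| > (n/r) \|\sum a_j g_{l_j}\|$ for every sufficiently late increasing $(l_j)$; and these witnesses may be found with $k_1$ arbitrarily large. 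Second, $B_R$ is closed under taking subsequences, so if we extract the bad sequence as a subsequence of some single $(x^{n_0}_k)$, it automatically lies in $R$.

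\emph{Main construction.} I plan to apply a Ramsey/tree argument (in the style of Bourgain's ordinal-tree encoding, or the Galvin--Prikry theorem) to the tree $\mathcal{T}$ of finite bad words arising from all the $(x^n_k)$'s. Using a diagonal fusion combined with right dominance, the idea is to align the bad words of levels $n$ and $n+1$ inside a common subsequence of some $(x^{n_0}_k)$, and then to extract an infinite branch that simultaneously witnesses failure of $C$-domination for every $C$. The extracted $(y_k)$ will be a subsequence of $(x^{n_0}_k) \in B_R$, hence $(y_k) \in B_R \subset R$, contradicting the hypothesis.

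\emph{Main obstacle.} The technical heart is the fusion step: producing a single $(y_k) \in R$ that inherits bad witnesses at arbitrarily high levels $n$. A priori, bad witnesses at distinct levels live in distinct sequences $(x^n_k)$, and $R$ is \emph{not} assumed to be closed under any form of gluing. Right dominance furnishes the compatibility required to align witnesses at different levels by pushing indices upward at bounded cost, while the subsequential axiom on $R$ preserves membership. Formalizing this alignment---probably via an ordinal-indexed family of trees, as suggested by the paper's promised ordinal-quantified generalization---is what I expect to require the most care.
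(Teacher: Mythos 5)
There is a genuine gap at the heart of your main construction. You propose to extract the final counterexample $(y_k)$ as a subsequence of a \emph{single} row $(x^{n_0}_k)$, relying on the subsequential axiom to keep it in $B_R$. But this cannot work: every subsequence of $(x^{n_0}_k)$ is itself a member of $B_R\subset R$, so by the hypothesis of the theorem it \emph{does} admit a subsequence dominated by a subsequence of $(g_n)_{n=1}^\infty$ (indeed $C_{n_0}$-dominated, for the constant you extracted). A single row is bad only at level $n_0$; no subsequence of it can simultaneously witness failure of $C$-domination for every $C$. The counterexample must genuinely mix infinitely many rows, and your stated worry that ``$R$ is not assumed to be closed under any form of gluing'' misses that it is: $R$ is a Banach space and the weights can be chosen absolutely summable with sum at most $1$, so $\varrho=\sum_{l}w_l\varrho_l$ converges in $R$ and lies in $B_R$. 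This summation is the gluing the argument needs, and avoiding it is what breaks your proof.

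Once you do sum the rows, the real difficulty — which your proposal names but does not resolve — is showing that the badness of row $k$ is not cancelled by the other rows. The paper handles this by quantifying badness over the fine Schreier families $\mathcal{F}_\xi$: one sets $\Gamma(\xi)$ to be the uniform constant needed over sets in $\mathcal{F}_\xi$, proves subadditivity $\Gamma(\zeta+\xi)\leqslant r(\Gamma(\zeta)+\Gamma(\xi))$ and a limit estimate, deduces that the minimal $\xi$ with $\Gamma(\xi)=\infty$ is some $\mu=\omega^\gamma$ with $\sup_{\xi<\mu}\Gamma(\xi)=\infty$, and then chooses row $k$ to be bad on some $\mathcal{F}_{\xi_k}$ with $\xi_k<\mu$ while being (a) dominated with some finite constant $C_k$ on all of $\mathcal{F}_\mu$ and (b) dominated with the \emph{uniform} constant $r\Gamma(\xi_j)+1/r$ on each $\mathcal{F}_{\xi_j}$, $j<k$. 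Splitting the bad set $F\in\mathcal{F}_{\xi_k}\cap\mathcal{F}_\mu$ into a short initial piece plus a tail lying in $\mathcal{F}_\mu$ (for rows $l<k$) or in $\mathcal{F}_{\xi_k}$ (for rows $l>k$) then controls all other rows and yields the contradiction. Your Ramsey/tree fusion sketch points in a plausible direction but supplies no mechanism for this interference control, which is the substantive content of the proof.
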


If we let $R=c_0^w(X)$, the space of weakly null sequences in $X$,  and $\|\cdot\|_R=\|\cdot\|_{\ell_\infty(X)}$, then by homogeneity, Theorem 
\ref{main} implies the following.

\begin{corollary} Let $(g_n)_{n=1}^\infty$ be a seminormalized, right dominant Schauder basis. If every normalized, weakly null sequence in $X$ has a subsequence dominated by a subsequence of $(g_n)_{n=1}^\infty$, then there exists $C>0$ such that every normalized, weakly null sequence in $X$ admits a subsequence $C$-dominated by a subsequence of $(g_n)_{n=1}^\infty$. 

\label{main5}
\end{corollary}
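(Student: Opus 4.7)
The plan is to apply Theorem \ref{main} with $R = c_0^w(X)$, the space of bounded weakly null sequences in $X$, equipped with $\|\cdot\|_R = \|\cdot\|_{\ell_\infty(X)}$. Verifying that $(R, \|\cdot\|_R)$ is a subsequential space on $X$ is routine: $c_0^w(X)$ is closed in $\ell_\infty(X)$ under the uniform norm by a three-$\ee$ argument against any fixed $f \in X^*$, so it is a Banach space; containment of $c_{00}(X)$, stability of $B_R$ under taking subsequences, and $B_R \subset B_{\ell_\infty(X)}$ are immediate from the definitions.

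The content of the argument is upgrading the normalized-sequence hypothesis of Corollary \ref{main5} to the every-member hypothesis of Theorem \ref{main}. Given $(x_n) \in R$, I would pass to a subsequence along which $\|x_{n_k}\|$ converges to some $c \in [0, \infty)$. If $c = 0$, refine further so that $\|x_{n_k}\| \leqslant 2^{-k}$; the uniform boundedness of the coordinate functionals of the seminormalized basic sequence $(g_n)$ then gives
\[
\Bigl\|\sum_k a_k x_{n_k}\Bigr\| \leqslant \sum_k 2^{-k}|a_k| \leqslant K \Bigl\|\sum_k a_k g_{m_k}\Bigr\|
\]
for any subsequence $(g_{m_k})$ of $(g_n)$, so $(x_{n_k})$ is dominated. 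If $c > 0$, refine so that $\bigl|\|x_{n_k}\| - c\bigr| \leqslant 2^{-k}$; the sequence $(x_{n_k}/\|x_{n_k}\|)$ is then normalized and weakly null, so the hypothesis yields a subsequence dominated by some $(g_{m_j})$. A perturbation argument using the same coordinate estimate absorbs the residual variation $\|x_{n_{k_j}}\| - c$ into a uniform constant, yielding domination of $(x_{n_{k_j}})$ itself by $(g_{m_j})$.

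With the hypothesis of Theorem \ref{main} verified, the theorem delivers $C > 0$ such that every element of $B_R$ has a subsequence $C$-dominated by a subsequence of $(g_n)$. Since every normalized weakly null sequence in $X$ lies in $B_R$, Corollary \ref{main5} follows at once. The only delicate step is the perturbation estimate in the $c > 0$ case, which requires the norms $\|x_{n_{k_j}}\|$ to approach $c$ fast enough to be dissolved by the coordinate bound for $(g_n)$; the bulk of the work is carried by Theorem \ref{main}, and the rest is standard bookkeeping.
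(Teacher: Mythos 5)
Your proof is correct and follows the paper's own route: the paper deduces Corollary \ref{main5} from Theorem \ref{main} by taking $R=c_0^w(X)$ with $\|\cdot\|_R=\|\cdot\|_{\ell_\infty(X)}$ and appealing to ``homogeneity.'' Your case split according to whether $\|x_{n_k}\|\to 0$ or $\|x_{n_k}\|\to c>0$, together with the coordinate-functional perturbation estimate, correctly supplies the reduction that the paper compresses into that single word.
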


Knaust and Odell also gave a proof of the following theorem in \cite{KO}.

\begin{theorem} Let $X$ be a Banach space in which every spreading model generated by a normalized, weakly null sequence in $X$ is dominated by the canonical $\ell_p$ basis. Then there exists $C$ such that every spreading model generated by a normalized, weakly null sequence in $X$ is $C$-dominated by the $\ell_p$ basis.

\label{fold}

\end{theorem}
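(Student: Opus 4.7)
The plan is to derive Theorem \ref{fold} from Corollary \ref{main5} applied with $(g_n)_{n=1}^\infty$ equal to the canonical $\ell_p$ basis (or the $c_0$ basis). Since $\ell_p$ and $c_0$ have symmetric bases, they are in particular $1$-right dominant and every subsequence of $(g_n)_{n=1}^\infty$ is $1$-equivalent to $(g_n)_{n=1}^\infty$ itself, so the ``dominated by a subsequence of $(g_n)_{n=1}^\infty$'' conclusion of Corollary \ref{main5} is identical to ``dominated by $(g_n)_{n=1}^\infty$''. The bridge between spreading models and subsequential domination is the standard approximation scheme that defines a spreading model: if a spreading model is $C_0$-dominated by $\ell_p$, then a sufficiently sparse subsequence of the original sequence is essentially $C_0$-dominated by $\ell_p$ as well.

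The first step is to verify the hypothesis of Corollary \ref{main5} with $(g_n)_{n=1}^\infty$ the $\ell_p$ basis, i.e.\ that every normalized, weakly null sequence in $X$ admits a subsequence dominated by the $\ell_p$ basis. Let $(x_k)_{k=1}^\infty$ be any normalized, weakly null sequence. By the Brunel--Sucheston construction, extract a subsequence $(x_{n_k})_{k=1}^\infty$ that generates a spreading model $(e_k)_{k=1}^\infty$. By hypothesis, $(e_k)_{k=1}^\infty$ is $C_0$-dominated by the $\ell_p$ basis for some $C_0>0$ depending on $(x_k)_{k=1}^\infty$. Using the defining property of the spreading model together with a standard diagonal extraction (controlling the approximation error at support size $N$ by $\epsilon_N$ with $\sum \epsilon_N$ small), one obtains a further subsequence $(x_{m_k})_{k=1}^\infty$ such that for every finitely supported scalar sequence $(a_i)_{i=1}^\infty$,
\[
\Bigl\|\sum_{i} a_i x_{m_i}\Bigr\| \leqslant (C_0+1)\Bigl(\sum_i |a_i|^p\Bigr)^{1/p},
\]
after scaling by $\max_i |a_i|$ so that the spreading-model approximation applies uniformly. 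Thus $(x_{m_k})_{k=1}^\infty$ is dominated by the $\ell_p$ basis, which verifies the hypothesis of Corollary \ref{main5}.

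The second step applies Corollary \ref{main5} to produce a single constant $C>0$ such that every normalized, weakly null sequence in $X$ admits a subsequence $C$-dominated by the $\ell_p$ basis. To translate this back into the language of spreading models, fix any normalized, weakly null $(x_k)_{k=1}^\infty$ generating a spreading model $(e_k)_{k=1}^\infty$, and extract a subsequence $(x_{m_k})_{k=1}^\infty$ that is $C$-dominated by the $\ell_p$ basis $(f_i)_{i=1}^\infty$. Because $(x_{m_k})_{k=1}^\infty$ generates the \emph{same} spreading model $(e_k)_{k=1}^\infty$, for any finitely supported $(a_i)_{i=1}^N$ we may pass to the limit along sparse further subsequences:
\[
\Bigl\|\sum_{i=1}^N a_i e_i\Bigr\|_{SM} \;=\; \lim_{\substack{i_1<\cdots<i_N\\ i_1\to\infty}}\Bigl\|\sum_{j=1}^N a_j x_{m_{i_j}}\Bigr\| \;\leqslant\; C\Bigl(\sum_{j=1}^N |a_j|^p\Bigr)^{1/p},
\]
so $(e_k)_{k=1}^\infty$ is $C$-dominated by the $\ell_p$ basis. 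The same argument works with the $\ell_p$ basis replaced by the $c_0$ basis.

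The only nontrivial technical step is the first one, the spreading-model approximation that converts $C_0$-domination of the spreading model into (approximate) $C_0$-domination of a genuine subsequence; this is completely routine, so the essence of the result is carried by Theorem \ref{main} itself, and no additional Ramsey or compactness input is needed beyond what is built into that theorem.
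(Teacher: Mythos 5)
Your second step (passing from a uniformly $C$-dominated subsequence back to the spreading model) is fine, but your first step --- the claim that the spreading-model hypothesis of Theorem \ref{fold} implies the subsequential hypothesis of Corollary \ref{main5} --- is false, and it carries the entire weight of the reduction. The defining iterated limit of a spreading model controls $\|\sum_{j=1}^{m} a_j x_{l_j}\|$ only as all of $l_1<\cdots<l_m$ tend to infinity; consequently the diagonal extraction you invoke produces a subsequence $(x_{m_k})_{k=1}^\infty$ for which the approximation $\bigl|\|\sum_{j} a_j x_{m_{l_j}}\|-\|\sum_j a_j e_j\|\bigr|<\epsilon_N$ holds only for combinations of length $N$ supported on indices $l_1<\cdots<l_N$ with $N\leqslant l_1$, i.e.\ only on Schreier-admissible sets. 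It gives no control whatsoever over, say, $\|\sum_{i=1}^{N} a_i x_{m_i}\|$ with support $\{1,\dots,N\}$ and $N$ large, and no further extraction helps, since re-extracting changes which vectors occupy the early positions. So the displayed inequality $\|\sum_i a_i x_{m_i}\|\leqslant (C_0+1)\bigl(\sum_i |a_i|^p\bigr)^{1/p}$ for \emph{arbitrary} finitely supported $(a_i)$ does not follow; the scaling trick with $\max_i|a_i|=1$ only absorbs the error on admissible supports, it does not create an estimate where none exists.

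This is not a repairable technicality: the two hypotheses are genuinely inequivalent. In the language of the paper (Theorem \ref{main2}), the hypothesis of Theorem \ref{fold} is equivalent to $\Gamma(\omega)<\infty$, whereas the hypothesis of Corollary \ref{main5} is that $\Gamma(\omega_1,\varrho)<\infty$ for every $\varrho$, and the paper records (following \cite{KO}; see also Proposition \ref{sc} and the discussion at the end of Section 4) that $\Gamma(\omega)<\infty$ does not imply $\Gamma(\omega_1)<\infty$. This is precisely why Theorem \ref{fold} and its generalization Theorem \ref{ffold} are obtained as the $\xi=\omega$ case of Theorem \ref{good}: the spreading-model hypothesis only yields estimates over the family $\mathcal{F}_\omega$ of Schreier-admissible sets, so the uniform-boundedness argument must be run over $\mathcal{F}_\omega$ rather than over all of $[\nn]^{<\omega}$ as in Corollary \ref{main5}. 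The correct route is $(i)\Rightarrow(iii)\Rightarrow(ii)$ of Theorem \ref{main2} with $(g_n)_{n=1}^\infty$ the canonical $\ell_p$ (or $c_0$) basis, not a reduction to Corollary \ref{main5}.
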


We provide the following generalization.

\begin{theorem} Let $(g_n)_{n=1}^\infty$ be a seminormalized, right dominant basis. Let $X$ be a Banach space and let $(R, \|\cdot\|_R)$ be a subsequential space on $X$. If every spreading model generated by a member of $R$ is dominated by a spreading model generated by a subsequence of $(g_n)_{n=1}^\infty$, then there exists a constant $C$ such that every spreading model generated by a member of $B_R$ is $C$-dominated by every spreading model generated by a subsequence of $(g_n)_{n=1}^\infty$.

\label{ffold}
\end{theorem}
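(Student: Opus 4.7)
The plan is to reduce Theorem \ref{ffold} to Theorem \ref{main} by constructing an auxiliary subsequential space whose elements are the spreading-model bases of sequences in $R$.

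First, I would pass to a Brunel–Sucheston subsequence of $(g_n)_{n=1}^\infty$ and relabel, so that $(g_n)_{n=1}^\infty$ itself generates a spreading model $(\bar g_n)_{n=1}^\infty$. A quick argument from the definition of the spreading model then shows that every subsequence of the (relabeled) $(g_n)_{n=1}^\infty$ with indices tending to infinity generates the same spreading model $(\bar g_n)_{n=1}^\infty$, and moreover the $r$-right-dominance estimate $\|\sum a_i g_{m_i}\|\leqslant r\|\sum a_i g_{m_{n_i}}\|$ for $n_i\geqslant i$, passed to the spreading-model limit as $n_1\to\infty$, yields $(g_{m_n})_{n=1}^\infty \prec_r (\bar g_n)_{n=1}^\infty$ for every subsequence $(g_{m_n})_{n=1}^\infty$.

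Next, I would construct the auxiliary subsequential space $\widetilde R$ as follows: fix a Banach space $Y$ large enough to contain isometric copies of every spreading-model space generated by a sequence in $R$ (for instance an $\ell_\infty$-direct sum of all such spaces, or a sufficiently rich universal space), and for each $(x_n)_{n=1}^\infty\in R$ generating a spreading model $(\tilde e^{(x)}_n)_{n=1}^\infty$, view this sequence as sitting inside $Y$ via the natural embedding. Let $\widetilde R\subset \ell_\infty(Y)$ be the subspace generated by such sequences together with all of their subsequences, endowed with a norm inherited from $\|\cdot\|_R$ via an infimum over admissible generators. The three subsequential-space axioms can then be verified: closure under subsequences is automatic because a subsequence of a spreading sequence is isometric to the sequence itself; the Banach-space axiom and the ball inclusion follow after completion and an appropriate rescaling.

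With these preparations the hypothesis of Theorem \ref{ffold} becomes the hypothesis of Theorem \ref{main} applied to $\widetilde R$, namely that every member of $\widetilde R$ admits a subsequence dominated by a subsequence of $(g_n)_{n=1}^\infty$. Theorem \ref{main} then yields a uniform constant $C>0$ such that every element of $B_{\widetilde R}$ admits a subsequence $C$-dominated by a subsequence of $(g_n)_{n=1}^\infty$. Since subsequences of spreading sequences are isometric to the sequences themselves, this upgrades to $C$-domination of the full spreading sequence; combining with the right-dominance comparison $(g_{m_n})_{n=1}^\infty \prec_r (\bar g_n)_{n=1}^\infty$ from the first paragraph delivers the desired conclusion with constant $Cr$.

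The principal technical obstacle I anticipate lies in the construction of $\widetilde R$: the space $Y$ must be chosen to avoid cardinality pathologies in the universal embedding; the induced norm must be shown to be well defined when distinct sequences of $R$ generate isometric spreading models in $Y$; and all three subsequential-space axioms must be rigorously verified. A secondary delicate point is ensuring that the Brunel–Sucheston relabeling does not lose information, and in particular that ``every spreading model of a subsequence of the original $(g_n)_{n=1}^\infty$'' is captured by $(\bar g_n)_{n=1}^\infty$ modulo a constant depending only on $r$; this is what the right dominance of $(g_n)_{n=1}^\infty$ is designed to provide.
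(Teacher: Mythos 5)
There is a genuine gap, and it is fatal to the reduction: the hypothesis of Theorem \ref{ffold} does not imply the hypothesis of Theorem \ref{main} for your auxiliary space $\widetilde R$. Your first paragraph correctly establishes that every subsequence $(g_{m_n})_{n=1}^\infty$ is $r$-dominated by the spreading model $(\bar g_n)_{n=1}^\infty$, but the reduction needs the reverse inequality: to feed $\widetilde R$ into Theorem \ref{main} you must show that each spreading model $(\tilde e_n)_{n=1}^\infty$ of a member of $R$ --- which by hypothesis is only dominated by $(\bar g_n)_{n=1}^\infty$ --- is dominated by some actual subsequence of $(g_n)_{n=1}^\infty$. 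Right dominance gives $\|\sum a_n g_{k_n}\|\leqslant r\|\sum a_n\bar g_n\|$, not the other way around, and the other direction is false in general. A concrete counterexample is supplied by Proposition \ref{sc} with $\nu=1$: take $X=\ell_2$, $R=c_0^w(\ell_2)$, and $(g_n)_{n=1}^\infty$ the basis of the $2$-convexified Schreier space $X_1^{(2)}$. Every spreading model generated by a subsequence of $(g_n)_{n=1}^\infty$ is the $\ell_2$ basis (for $l_1\geqslant m$ the set $\{l_1,\ldots,l_m\}$ lies in $\mathcal{S}_1$), so the hypothesis of Theorem \ref{ffold} holds with constant $1$; yet $\Gamma(\omega^2)=\infty$ there, i.e.\ the $\ell_2$ basis admits no subsequence dominated by any subsequence of $(g_n)_{n=1}^\infty$ (normalized $\mathcal{S}_2$-supported combinations make $\|\cdot\|_{X_1^{(2)}}$ arbitrarily small). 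So the member of $\widetilde R$ you would build from this spreading model fails the hypothesis of Theorem \ref{main}, and the paper stresses precisely this inequivalence: $\Gamma(\omega)<\infty$ (the spreading-model property) is strictly weaker than $\Gamma(\omega_1)<\infty$ (the subsequence property).

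The correct route, and the one the paper takes, is not to pass to $\xi=\omega_1$ but to stay at $\xi=\omega$: the spreading-model hypothesis is converted, via a finite-net and diagonalization argument, into the statement $\Gamma(\omega,\varrho)<\infty$ for each $\varrho\in R$ --- i.e.\ uniform control only over combinations supported on sets $F$ with $|F|\leqslant q_{\min F}$, the sets of $\mathcal{F}_\omega$ --- and then Theorem \ref{good} with $\zeta=\omega$ yields $\Gamma(\omega)<\infty$, which is translated back into uniform domination of spreading models (this is the equivalence $(i)\Leftrightarrow(iii)\Leftrightarrow(ii)$ of Theorem \ref{main2}). Separately, even setting aside the main issue, your construction of $\widetilde R$ is not adequately justified: the ``infimum over admissible generators'' norm must be shown to be complete and to satisfy $B_{\widetilde R}\subset B_{\ell_\infty(Y)}$, and neither is routine. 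I recommend abandoning the reduction to Theorem \ref{main} and working with the graded quantities $\Gamma(\xi,\varrho)$ directly.
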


Resembling one of the proofs of the Principle of Uniform Boundedness, the proofs of Theorem \ref{KO} by Knaust and Odell and Theorem \ref{Freeman} of Freeman, are obtained by assuming the sequence/subseqeuence hypothesis holds, but not uniformly. Then one obtains an array $(x^k_n)_{n,k=1}^\infty$ such that for each $k$, $(x^k_n)_{n=1}^\infty$ is a normalized, weakly null sequence which is $C_k$-dominated by $(v_n)_{n=1}^\infty$ (where $(v_n)_{n=1}^\infty$ is the canonical basis of $\ell_p$ or $c_0$ in \cite{KO}, and is an arbitrary seminormalized Schauder basis in \cite{F}), but such that row $k$ is not dominated by $(v_n)_{n=1}^\infty$ with any constant better than $D_k$. Then one combines the sequences with absolutely converging weights $w_k$.  One has to know that the combined sequence does not admit any subsequence dominated by $(v_n)_{n=1}^\infty$. Of course, this is done by showing that since for an arbitrary $k$, $(x^k_n)_{n=1}^\infty$ is not $D_k$-dominated by $(v_n)_{n=1}^\infty$, then a careful choice of $D_k, C_k$, and $w_k$, the combined sequence is not $f(D_k)$-dominated by $(v_n)_{n=1}^\infty$, where $f$ is some function such that $\lim_{x\to \infty}f(x)=\infty$.    The difficulty is knowing the badness of $(x^k_n)_{n=1}^\infty$ is not canceled out by $(x^l_n)_{n=1}^\infty$ for $l\neq k$ in the process of combining the rows of this array.

In the proof of Theorem \ref{fold} in \cite{KO}, there exist integers $k_1<k_2<\ldots$ such that the badness of row $n$ is witnessed by a linear combination of $k_n$ vectors. Then one can use the triangle inequality to uniformly control all linear combinations of at most $k_n$ vectors on each row.  Our proof of Theorem \ref{main} and \ref{ffold} will follow from a single result (Theorem \ref{good}) which interpolates therebetween, and is a transfinite version of the proof of Theorem \ref{fold}. This yields a unified approach to the two distinct Theorems \ref{KO}, \ref{fold} by quantifying using possibly infinite ordinals the complexity of linear combinations required to witness the non-uniformity of domination of subsequenes of members of $B_R$ by subsequences of $(g_n)_{n=1}^\infty$.  Theorem \ref{main} is the $\xi=\omega_1$ case and Theorem \ref{ffold} is the $\xi=\omega$ case of Theorem \ref{good}. In Section $4$, we discuss the distinction between the two extreme cases and the intermediate values of $\xi$.

Theorem \ref{KO} and Theorem \ref{Freeman} were shown by first proving the result for $C(K)$ spaces where $K$ is a countable, compact, Hausdorff space, and then proving the result in the general case by transporting the problem from a general $X$ into such a $C(K)$ space. Our approach avoids some of the technical difficulties incurred by transporting to an appropriate $C(K)$ space, and provides further, quantitative information not contained in those proofs. More precisely, if the conclusion of Theorem \ref{main} does not hold, one can produce a quantified measure of how the upper estimates tend to infinity on sets of prescribed complexity.

We note that neither of Theorems \ref{Freeman} and \ref{main} implies the other.  Our method of proof, however, can be adapted to provide an alternative proof of Theorem \ref{Freeman}. The final section of the paper compares Theorem \ref{main} to Theorem \ref{Freeman}, exhibiting many triples $X$, $R$, $(g_n)_{n=1}^\infty$ which satisfy the hypotheses of Theorem \ref{main}, but such that if $X$, $R$, $(v_n)_{n=1}^\infty$ satisfy the hypotheses of Freeman's theorem, then $(v_n)_{n=1}^\infty$ is equivalent to the canonical $\ell_1$ basis. Thus we show that our Theorem \ref{main} is genuinely distinct from Freeman's theorem. 

\section{Preliminaries}

Throughout, let $R$ be as described in the introduction. That is, $c_{00}(X)\subset R\subset \ell_\infty(X)$, $(R, \|\cdot\|_R)$ is a Banach space, $B_R\subset B_{\ell_\infty(X)}$,  and any subsequence of a member of $B_R$ is also a member of $B_R$.

Let us point out that for Theorems \ref{main} and \ref{ffold}, and for the later Theorem \ref{good}, it is sufficient to prove the theorem for a basis equivalent to $(g_n)_{n=1}^\infty$. Therefore we can and do assume that $(g_n)_{n=1}^\infty$ is normalized and bimonotone.  Also, since right dominance implies $r$-right dominance for some $1\leqslant r<\infty$, we will fix $1\leqslant r<\infty$ and assume throughout that $(g_n)_{n=1}^\infty$ is normalized, bimonotone, and $r$-right dominant.

In this work, we identify subsets of $\nn$ with strictly increasing sequences in the usual way.  We let $[\nn]^{<\omega}$ (resp. $[\nn]$) denote the finite (resp. infinite) subsets of $\nn$. For $M\in[\nn]$, we let $[M]^{<\omega}$ (resp. $[M]$) denote the set of finite (resp. infinite) subsets of $M$.   Given $E\subset \nn$ and $n\in\nn$, we write $n \leqslant E$ (resp. $n<E$) to mean that $n\leqslant \min E$ (resp. $n<\min E$).  We agree to the convention that $\min \varnothing=\infty$, so $n\leqslant \varnothing$ holds for any $n\in\nn$. 

Given $M\in[\nn]$, we let $M(n)$ denote the $n^{th}$ smallest member of $M$. If $F\in [\nn]^{<\omega}$, then for $n\leqslant |F|$, we let $F(n)$ denote the $n^{th}$ smallest member of $F$, so $F=(F(n))_{n=1}^{|F|}$.  Given $M\in[\nn]$ and $F\subset \nn$, we let $M(F)=(M(n): n\in F)$.  Given a collection $\mathcal{F}$ of subsets of $\nn$ and a subset $M$ of $\nn$, we let $\mathcal{F}\upp M$ denote the subset of $\mathcal{F}$ consisting of those members of $\mathcal{F}$ which are subsets of $M$.

Given a set $\Lambda$, we let $\Lambda^{<\omega}$ denote the set of finite sequences of members of $\Lambda$. We say $T\subset \Lambda^{<\omega}$ is a \emph{tree} provided that if $t\in T$ and $s$ is an initial segment of $t$, then $s\in T$. In particular, any non-empty tree includes the empty sequence, $\varnothing$.   Given a tree $T$, we  let $MAX(T)$ denote the set of maximal members of $T$ with respect to initial segment ordering. We then define the derived tree $T'$ of $T$ by $T'=T\setminus MAX(T)$, and note that $T'$ is also a tree.    We define by transfinite induction the derived trees $$T^0=T,$$ $$T^{\xi+1}=(T^\xi)',$$ and if $\xi$ is a limit ordinal, $$T^\xi=\bigcap_{\zeta<\xi}T^\zeta.$$   If there exists an ordinal $\xi$ such that $T^\xi=\varnothing$, then we say $T$ is \emph{well-founded} and define $\text{rank}(T)$ to be the minimum ordinal $\xi$ such that $T^\xi=\varnothing$. If for all $\xi$, $T^\xi\neq \varnothing$, then we say $T$ is \emph{ill-founded}. We have the following standard result regarding trees on countable sets $\Lambda$.  The statement and its proof are well-known, so we omit them.

\begin{theorem} If $\Lambda$ is a countable set and $T\subset \Lambda^{<\omega}$ is a tree, then either $T$ is ill-founded or $\text{\emph{rank}}(T)$ is countable. Furthermore, $T$ is ill-founded if and only if there exists a sequence $(\lambda_n)_{n=1}^\infty \subset \Lambda$ such that $(\lambda_n)_{n=1}^t\in T$ for all $t\in\nn$. 

\label{tree}
\end{theorem}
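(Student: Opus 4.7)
My plan is to exhibit both the dichotomy and the branch-characterization simultaneously by analyzing when the transfinite derived sequence $(T^\xi)_\xi$ stabilizes. The key observation is that $\Lambda^{<\omega}$ is countable (a countable union of finite products of the countable set $\Lambda$), so $T$ is countable, and this countability is what forces stabilization at a countable stage.

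First I would handle the easier direction: if there exists $(\lambda_n)_{n=1}^\infty \subset \Lambda$ with $(\lambda_n)_{n=1}^t \in T$ for every $t \in \nn$, then $T$ is ill-founded. I would show by transfinite induction on $\xi$ that every initial segment $(\lambda_n)_{n=1}^t$ belongs to $T^\xi$. The base case is the hypothesis. At a successor step, if every initial segment of the branch lies in $T^\xi$, then $(\lambda_n)_{n=1}^t$ admits the proper extension $(\lambda_n)_{n=1}^{t+1}$ within $T^\xi$, so it is not maximal and hence lies in $T^{\xi+1}$. Limit stages are immediate from $T^\xi = \bigcap_{\zeta<\xi} T^\zeta$. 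In particular $T^\xi \neq \varnothing$ for all $\xi$.

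For the converse together with the countability of the rank, I would argue by stabilization. The transfinite sequence $(T^\xi)_\xi$ is weakly decreasing under inclusion, all terms sitting inside the countable set $T$. At every ordinal $\xi$ where $T^{\xi+1} \subsetneq T^\xi$, pick an element of $T^\xi \setminus T^{\xi+1}$; this defines an injection from the set of strict-descent ordinals into $T$, so there can be only countably many such ordinals. Consequently there exists a countable $\xi_0$ with $T^{\xi_0+1}=T^{\xi_0}$, and by induction $T^\xi=T^{\xi_0}$ for all $\xi \geqslant \xi_0$. If $T^{\xi_0}=\varnothing$ then $T$ is well-founded with $\text{rank}(T)\leqslant \xi_0$, which is countable. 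Otherwise, since $T^{\xi_0}=(T^{\xi_0})'$, no element of $T^{\xi_0}$ is maximal; as $T^{\xi_0}$ is a non-empty intersection of trees it contains $\varnothing$, and then one recursively selects proper extensions $\varnothing \prec (\lambda_1) \prec (\lambda_1,\lambda_2) \prec \cdots$ inside $T^{\xi_0}\subset T$, producing the required infinite branch.

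There is no real obstacle here: the content is just the pigeonhole-style injection that bounds the strict-descent ordinals by the countability of $T$, paired with the observation that a tree equal to its own derivative admits an infinite branch; the rest is routine transfinite induction. This is why the authors elect to omit the proof.
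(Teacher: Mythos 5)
Your proof is correct: the paper deliberately omits the argument as well-known, and what you give is precisely the standard one (the injection from strict-descent ordinals into the countable set $T$ forces stabilization at a countable stage, and a non-empty stabilized tree has no maximal elements, hence an infinite branch). The only cosmetic imprecision is calling $T^{\xi_0}$ "a non-empty intersection of trees" when $\xi_0$ may be a successor; the relevant fact is simply that every $T^\xi$ is a non-empty tree and therefore contains $\varnothing$.
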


Given $(m_n)_{n=1}^t, (l_n)_{n=1}^t\in [\nn]^{<\omega}$, we say $(l_n)_{n=1}^t$ is a \emph{spread} of $(m_n)_{n=1}^t$ if $m_n\leqslant l_n$ for all $1\leqslant n\leqslant t$. 
 We say a subset $\mathcal{F}$ of $[\nn]^{<\omega}$ is \emph{spreading} if it contains all spreads of its members. We say a subset $\mathcal{F}$ of $[\nn]^{<\omega}$ is \emph{hereditary} if it contains all subsets of its members.  We collect the following standard facts about the Schreier and fine Schreier families, which can be found, for example, in \cite[Propositions $3.1$, $3.2$]{Con}. We note that our identification of subsets of $\nn$ with increasing sequences, we may view  a hereditary set $\mathcal{F}\subset [\nn]^{<\omega}$ as a tree on $\nn$ and consider its rank.  

We will need the following dichotomy, shown in \cite{G}. 

\begin{theorem} If $ \mathcal{F}, \mathcal{G}\subset [\nn]^{<\omega}$ are hereditary, then for any $L\in[\nn]$, there exists $M\in[L]$ such that either $\mathcal{F}\upp M\subset \mathcal{G}$ or $\mathcal{G}\upp M\subset \mathcal{F}$. 

\label{gasp}
\end{theorem}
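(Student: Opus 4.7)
The plan is to deduce the dichotomy from two successive applications of Galvin's theorem on initial segments, which asserts that for any family $\mathcal{H} \subset [\nn]^{<\omega}$ and any $L \in [\nn]$, there exists $M \in [L]$ such that either every $N \in [M]$ has an initial segment lying in $\mathcal{H}$, or no $N \in [M]$ has any initial segment in $\mathcal{H}$. This is the natural Ramsey-theoretic tool here, since $\mathcal{F} \setminus \mathcal{G}$ and $\mathcal{G} \setminus \mathcal{F}$ need not be hereditary.

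First, I apply Galvin's theorem to $\mathcal{H}_1 = \mathcal{F} \setminus \mathcal{G}$ to obtain $M_1 \in [L]$ satisfying one of the two alternatives. If no $N \in [M_1]$ has an initial segment in $\mathcal{H}_1$, then given any $F \in \mathcal{F} \upp M_1$, one can extend $F$ to some $N \in [M_1]$ having $F$ as an initial segment; this forces $F \notin \mathcal{F} \setminus \mathcal{G}$, and hence $F \in \mathcal{G}$. So $\mathcal{F} \upp M_1 \subset \mathcal{G}$, and the proof is complete. Otherwise, every $N \in [M_1]$ has an initial segment in $\mathcal{F} \setminus \mathcal{G}$. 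Now I repeat with $\mathcal{H}_2 = \mathcal{G} \setminus \mathcal{F}$ on $M_1$, producing $M_2 \in [M_1]$; if the ``no initial segment'' alternative holds, the identical argument yields $\mathcal{G} \upp M_2 \subset \mathcal{F}$.

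It remains to rule out the case in which every $N \in [M_2]$ admits both an initial segment $F_N \in \mathcal{F} \setminus \mathcal{G}$ and an initial segment $G_N \in \mathcal{G} \setminus \mathcal{F}$. Since any two initial segments of a common $N$ are comparable under inclusion, we may assume without loss of generality that $F_N \subseteq G_N$. But then $F_N \subseteq G_N \in \mathcal{G}$ together with the hereditariness of $\mathcal{G}$ forces $F_N \in \mathcal{G}$, contradicting $F_N \in \mathcal{F} \setminus \mathcal{G}$. The reverse inclusion is handled symmetrically using the hereditariness of $\mathcal{F}$. Hence this final case cannot occur, and one of the two earlier alternatives must produce the desired $M$.

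The main obstacle, at least conceptually, is selecting the correct Ramsey-theoretic input: a Nash--Williams type statement for thin families would not apply, since $\mathcal{F} \setminus \mathcal{G}$ and $\mathcal{G} \setminus \mathcal{F}$ carry no structural restrictions, but the Galvin initial-segment dichotomy (equivalent to the Galvin--Prikry theorem applied to clopen subsets of $[\nn]$) works in full generality. Once this is invoked, the hereditariness hypothesis does all the remaining work by closing off the obstructive case.
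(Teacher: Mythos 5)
Your proof is correct. Each step checks out: the Galvin initial-segment dichotomy (the clopen case of Galvin--Prikry, relativized to $[L]$) is correctly stated and legitimately invoked for an arbitrary family; in the ``no initial segment'' alternative the extension of a finite $F\subset M_1$ to an infinite $N\in[M_1]$ with $F$ as initial segment is always possible; and the exclusion of the doubly-bad case is exactly right, since two initial segments of the same infinite set are comparable and hereditariness of whichever family contains the larger one yields the contradiction. The degenerate situations (one of the families empty, or $\varnothing$ lying in one family but not the other) are also absorbed by the argument without further comment.

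For comparison: the paper offers no proof of this statement at all, citing it to Gasparis \cite{G}. Gasparis's original argument proceeds by transfinite induction on the rank of the hereditary families, passing to the derived families $\{F: \{n\}\cup F\in\mathcal{F},\ n<F\}$ and diagonalizing; your route instead pushes all the Ramsey-theoretic content into a single black box (Galvin's lemma) and lets hereditariness do the rest. This makes for a shorter and more conceptual derivation, at the cost of invoking a result that is itself nontrivial --- though it is entirely standard and no deeper than the dichotomy being proved. Either way, your write-up is a complete and correct proof of the theorem as stated.
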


 We recall the \emph{fine Schreier families} $(\mathcal{F}_\xi)_{\xi<\omega_1}$.  We let $$\mathcal{F}_0=\{\varnothing\},$$ $$\mathcal{F}_{\xi+1}= \{\varnothing\}\cup \{(n)\smallfrown F: n<F\in \mathcal{F}_\xi\},$$ and if $\xi<\omega_1$ is a limit ordinal, we fix $\xi_n\uparrow \xi$ and define $$\mathcal{F}_\xi=\{F: \exists n\leqslant F\in \mathcal{F}_{\xi_n}\}.$$

We also recall the \emph{Schreier families} $(\mathcal{S}_\xi)_{\xi<\omega_1}$.  We let $$\mathcal{S}_0=\mathcal{F}_1,$$ $$\mathcal{S}_{\xi+1}=\Bigl\{\bigcup_{n=1}^t F_n: t\leqslant F_1<\ldots <F_t, \varnothing\neq F_n\in \mathcal{S}_\xi\Bigr\},$$ and if $\xi$ is a limit ordinal, we fix $\xi_n\uparrow \xi$ and let $$\mathcal{S}_\xi=\{F: \exists n\leqslant F\in\mathcal{S}_{\xi_n}\}.$$

\begin{proposition}\begin{enumerate}[(i)]\item For every $\xi<\omega_1$, $\text{\emph{rank}}(\mathcal{F}_\xi)=\xi+1$ and $\text{\emph{rank}}(\mathcal{S}_\xi)=\omega^\xi+1$. \item $\mathcal{F}_\xi$ and $\mathcal{S}_\xi$ are hereditary and spreading.  \item The families $\mathcal{F}_\xi$, $\xi<\omega_1$,  have the \emph{almost monotone property}. That is, for each $\zeta<\xi\leqslant \omega_1$, there exists $l\in\nn$ such that $l<F\in \mathcal{F}_\zeta$ implies $F\in\mathcal{F}_\xi$.  \end{enumerate}

\label{sfacts}

\end{proposition}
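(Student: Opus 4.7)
The plan is to establish all three items by transfinite induction on $\xi$, handling $\mathcal{F}_\xi$ and $\mathcal{S}_\xi$ in parallel, with (ii) first, (iii) derived from (ii) via a short inclusion, and (i) done last by analyzing iterated derivatives. For (ii), I would run a simultaneous induction that $\mathcal{F}_\xi$ is hereditary and spreading. The base case is immediate. For the successor step, take $F = (n)\smallfrown F' \in \mathcal{F}_{\xi+1}$ with $n < F' \in \mathcal{F}_\xi$, and let $G \subset F$: if $n \in G$, write $G = (n)\smallfrown G'$ and apply the inductive hereditariness of $\mathcal{F}_\xi$ to $G' \subset F'$; if $n \notin G$, then $G \subset F'$, so $G \in \mathcal{F}_\xi$, and decomposing $G = (m) \smallfrown G''$ with $m > n$, hereditariness gives $G'' \in \mathcal{F}_\xi$ with $m < G''$ automatic, hence $G \in \mathcal{F}_{\xi+1}$. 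Spreading at the successor level is similar: a spread $G = (m)\smallfrown G'$ of $F = (n)\smallfrown F'$ has $m \geqslant n$ and $G'$ a spread of $F'$, so the inductive hypothesis applies. Limit ordinals are immediate from the definition. The argument for $\mathcal{S}_\xi$ is analogous; the only extra point is that subsets and spreads preserve the admissibility condition $t \leqslant F_1 < \cdots < F_t$, since shrinking $t$ only relaxes the condition and spreading increases $\min F_1$.

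For (iii), the key preliminary is the inclusion $\mathcal{F}_\xi \subset \mathcal{F}_{\xi+1}$, which falls out of (ii): for nonempty $F = (n)\smallfrown F' \in \mathcal{F}_\xi$, hereditariness yields $F' \in \mathcal{F}_\xi$ and $n < F'$ is automatic, so $F \in \mathcal{F}_{\xi+1}$. Fixing $\zeta$, I would then induct on $\xi \geqslant \zeta$: the successor step is trivial from this inclusion. For the limit step, use $\xi_n \uparrow \xi$, choose $m$ with $\zeta \leqslant \xi_m$, apply the inductive hypothesis to $(\zeta, \xi_m)$ to obtain some $l'$, and take $l = \max(l', m)$; then any $F$ with $l < F \in \mathcal{F}_\zeta$ lies in $\mathcal{F}_{\xi_m}$ and satisfies $m \leqslant F$, hence $F \in \mathcal{F}_\xi$. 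The same scheme works verbatim for $\mathcal{S}_\xi$.

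For (i), the rank computation rests on the identity
\[
\mathcal{F}_{\xi+1}^\alpha = \{\varnothing\} \cup \{(n) \smallfrown F : n < F \in \mathcal{F}_\xi^\alpha\}
\]
for all $\alpha \leqslant \mathrm{rank}(\mathcal{F}_\xi)$, proved by transfinite induction on $\alpha$; at each successor step the maximal elements are exactly $(n)\smallfrown F$ with $F \in MAX(\mathcal{F}_\xi^\alpha)$, and $\varnothing$ is non-maximal unless $\mathcal{F}_\xi^\alpha = \{\varnothing\}$, in which case one additional derivative suffices. Plugging in $\alpha = \xi+1 = \mathrm{rank}(\mathcal{F}_\xi)$ gives $\mathcal{F}_{\xi+1}^{\xi+1} = \{\varnothing\}$, whence $\mathrm{rank}(\mathcal{F}_{\xi+1}) = \xi+2$; the limit case follows from a parallel analysis combined with $\mathrm{rank}(\mathcal{F}_{\xi_n}) = \xi_n + 1 \uparrow \xi$. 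For the Schreier side, the successor step is the main obstacle: $\mathcal{S}_{\xi+1}$ is built by concatenating up to $t \leqslant \min F_1$ copies of $\mathcal{S}_\xi$, and a careful derivative bookkeeping (tracking how the admissibility constraint $t \leqslant F_1$ interacts with each derivative) yields rank $\omega \cdot \omega^\xi + 1 = \omega^{\xi+1}+1$. The hardest part throughout is this derivative bookkeeping at successor stages and the interaction of the derivative with the root $\varnothing$, which requires a small case analysis at the boundary between nonempty and degenerate stages of the iteration.
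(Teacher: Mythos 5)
The paper does not actually prove this proposition: it is presented as a collection of standard facts and cited from \cite[Propositions 3.1, 3.2]{Con}, so there is no in-paper argument to compare against. Your outline is the standard direct proof by transfinite induction. Parts (ii) and (iii) are correct and essentially complete as sketched: the simultaneous induction for hereditariness and spreading works (the one hidden point, which you use correctly, is that conditions like $m<G''$ are automatic because subsets of $\nn$ are identified with strictly increasing sequences), and deriving the almost monotone property from the inclusion $\mathcal{F}_\zeta\subset\mathcal{F}_{\zeta+1}$ together with the choice $l=\max(l',m)$ at limit stages is exactly right (the case $\xi=\omega_1$ being trivial since $\mathcal{F}_{\omega_1}=[\nn]^{<\omega}$).

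Part (i) is where the substance lies, and two things need attention. First, a concrete error: you write $\omega\cdot\omega^\xi+1=\omega^{\xi+1}+1$, but $\omega\cdot\omega^\xi=\omega^{1+\xi}$, which equals $\omega^\xi$ rather than $\omega^{\xi+1}$ for infinite $\xi$; the product you want is $\omega^\xi\cdot\omega=\omega^{\xi+1}$. Second, the steps you defer as ``bookkeeping'' are genuinely the content of the proof and are not routine: (a) at a limit ordinal $\xi$, the family $\mathcal{F}_\xi$ is an infinite union of trees whose ranks are merely cofinal in $\xi$, and the rank of an infinite union can strictly exceed the supremum of the ranks of the pieces; one needs that the derivative commutes with finite unions of trees and that the pieces of $\mathcal{F}_\xi$ are disjoint above the root, so that every nonempty node dies before stage $\xi$ while $\varnothing$ survives to exactly stage $\xi$; (b) for $\mathcal{S}_{\xi+1}$ one needs a lemma computing the rank of a concatenation of trees (the $t$-fold concatenation of copies of $\mathcal{S}_\xi$ has rank on the order of $\omega^\xi\cdot t$), combined with the same root-disjointness device to pass to the supremum over admissible $t$. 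Also, your identity for $\mathcal{F}_{\xi+1}^\alpha$ tacitly uses that $\mathcal{F}_\xi^\alpha$ remains spreading (so that a singleton $(n)$ is non-maximal whenever $\mathcal{F}_\xi^\alpha$ contains a nonempty member); this is true but needs a line of justification. None of this means the approach is wrong---it is the standard route, and since the paper itself defers to \cite{Con} the level of detail is arguably appropriate---but as written the rank computation in (i) is asserted rather than proved, and the displayed ordinal identity for the Schreier case is false as stated.
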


For convenience, we let $\mathcal{F}_{\omega_1}= [\nn]^{<\omega}$. Of course, $\mathcal{F}_{\omega_1}$ is hereditary, spreading and contains each $\mathcal{F}_\xi$, $\xi<\omega_1$.

\section{Proof of Main Theorem}

Throughout this section, $1\leqslant r<\infty$ is fixed and  $(g_n)_{n=1}^\infty$ is a fixed, normalized, bimonotone, $r$-right dominant Schauder basis.  The Banach space  $X$ is fixed and $(R, \|\cdot\|_R)$ is a fixed subsequential space on $X$. We recall the definition of subsequential space:  \begin{enumerate}[(i)]\item $c_{00}(X)\subset R\subset \ell_\infty(X)$, \item $(R, \|\cdot\|_R)$ is a Banach space, \item any subsequence of a member of $B_R$ is a member of $B_R$, \item $B_R\subset B_{\ell_\infty(X)}$.\end{enumerate}

For $t\in\nn\cup \{\infty\}$ and sequences $(x_n)_{n=1}^t$, $(y_n)_{n=1}^t$ in (possibly different) Banach spaces, we write $(x_n)_{n=1}^t\leqslant_C (y_n)_{n=1}^t$ if $$\|\sum_{n=1}^t a_nx_n\|\leqslant C\|\sum_{n=1}^t a_ny_n\|$$ for all finitely supported scalar sequences $(a_n)_{n=1}^t$. Of course, if $(x_n)_{n=1}^t\leqslant_{C_1} (y_n)_{n=1}^t \leqslant_{C_2}(z_n)_{n=1}^t$, then $(x_n)_{n=1}^t\leqslant_{C_1C_2}(z_n)_{n=1}^t$.

Given a sequence $\varrho=(x_n)_{n=1}^\infty \in R$ and $0<C<\infty$, let $$T(\varrho, C)=\{\varnothing\}\cup \{(m_n, l_n)_{n=1}^t \in (\nn\times \nn)^{<\omega}: (x_{m_n})_{n=1}^t \leqslant_C (g_{l_n})_{n=1}^t, m_1<\ldots <m_t, \text{\ and\ }l_1<\ldots <l_t\}.$$

For $\varrho\in R$ and an ordinal $\xi\leqslant \omega_1$, we let $\Gamma(\xi, \varrho)$ denote the infimum of $C>0$ such that there exist $M,L\in[\nn]$ such that $$\{(M(F), L(F)): F\in\mathcal{F}_\xi\}\subset T(\varrho, C)$$ if such a $C$ exists, and  $\Gamma(\xi,\varrho)=\infty$ if no such $C$ exists.     We define $$\Gamma(\xi)=\sup_{\varrho\in B_R} \Gamma(\xi, \varrho).$$   We observe that for the $\xi=\omega_1$ case, since $\mathcal{F}_{\omega_1}=[\nn]^{<\omega}$,  $\Gamma(\omega_1, \varrho)$ is simply the infimum of $C>0$ such that $\varrho$ admits a subsequence $C$-dominated by a subsequence of $(g_n)_{n=1}^\infty$ if such a $C$ exists, and $\Gamma(\omega_1, \varrho)=\infty$ otherwise. Similarly, $\Gamma(\omega_1)$ is the infimum of constants $C$, if any such a $C$ exist, such that every member of $B_R$ has a subsequence $C$-dominated by a subsequence of $(g_n)_{n=1}^\infty$.  Therefore one can restate Theorem \ref{main} in the equivalent way: If $\Gamma(\omega_1, \varrho)<\infty$ for every $\varrho\in R$, then $\Gamma(\omega_1)<\infty$.

We now state our interpolation between Theorems \ref{main} and \ref{main2}, from which we will deduce both theorems as natural consequences. The statement of the theorem invites direct comparison with the Principle of Uniform Boundedness. 

\begin{theorem} Fix $\zeta\leqslant \omega_1$. If $\Gamma(\zeta, \varrho)<\infty$ for each $\varrho\in R$, then $\Gamma(\zeta)<\infty$. 

\label{good}
\end{theorem}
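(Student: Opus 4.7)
The plan is to argue by contradiction, essentially following the Principle-of-Uniform-Boundedness template sketched in the introduction. I will assume $\Gamma(\zeta) = \infty$ and produce a single $\varrho \in R$ with $\Gamma(\zeta, \varrho) = \infty$, contradicting the hypothesis.

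First, using $\Gamma(\zeta) = \infty$, I extract $\varrho_k = (x^k_n)_n \in B_R$ with $\Gamma(\zeta, \varrho_k) > D_k$ for a rapidly growing sequence $D_k$ to be fixed later. The hypothesis forces $\Gamma(\zeta, \varrho_k) \leqslant C_k$ for some $C_k \geqslant D_k$, witnessed by $M_k, L_k \in [\nn]$ giving $C_k$-domination on $\mathcal{F}_\zeta$. Next, a fusion step that chooses $M\in[\nn]$ threading through all the $M_l$'s, combined with the spreading property of $\mathcal{F}_\zeta$ (Proposition \ref{sfacts}) and the $r$-right dominance of $(g_n)$, provides a common pair $(M, L) \in [\nn]\times[\nn]$ relative to which each row $(x^l_{M(n)})_n$ admits a uniform upper estimate of the form ``$rC_l$-dominated by $(g_{L(n)})_n$ on $F \in \mathcal{F}_\zeta$ with $F\subseteq[l,\infty)$.''

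With weights $w_k > 0$ chosen so that $\sum_k w_k < \infty$ and $\sum_k w_k C_k < \infty$ (for instance $w_k \asymp 1/(k^2 C_k)$), I set $\varrho = \sum_k w_k \varrho_k \in R$. Applying the hypothesis to $\varrho$ and to its subsequence along $M$, together with another round of right-dominance reconciliation to align the resulting indexings, yields a pair $(M^*, L^*)$ with $M^* = M\circ N$ such that simultaneously $(y_{M^*(n)})_n$ is $r\tilde C$-dominated by $(g_{L^*(n)})_n$ on $\mathcal{F}_\zeta$ for some $\tilde C<\infty$, and each $(x^l_{M^*(n)})_n$ retains the $r^2 C_l$-upper estimate on $F\in\mathcal{F}_\zeta$ with $F\subseteq[l,\infty)$. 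Since $\Gamma(\zeta,\varrho_k)>D_k$ holds for every $k$, the definition of $\Gamma$ applied to $(M^*,L^*)$ provides, for each $k$ and each prescribed threshold $N_0$, an $F\in\mathcal{F}_\zeta$ with $\min F\geqslant N_0$ (via the spreading property) and scalars $(a_n)_{n\in F}$ witnessing $\|\sum_n a_n x^k_{M^*(F(n))}\|>D_k\|\sum_n a_n g_{L^*(F(n))}\|$. Bounding the remaining rows by the upper estimates above when $l\leqslant\min F$, and by the trivial bimonotone bound $|a_j|\leqslant\|\sum_i a_i g_{L^*(F(i))}\|$ when $l>\min F$, the reverse triangle inequality produces
\[
\|\sum_n a_n y_{M^*(F(n))}\|\geqslant |w_k|D_k\|\sum_n a_n g_{L^*(F(n))}\|-\Bigl(\sum_{l\neq k}|w_l|E_l\Bigr)\|\sum_n a_n g_{L^*(F(n))}\|,
\]
where $E_l = r^2C_l$ for $l\leqslant\min F$ and $E_l\leqslant|F|$ otherwise. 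A coordinated choice of $D_k,w_k$ makes $|w_k|D_k$ exceed $r\tilde C$ plus the total interference for large $k$, contradicting the upper estimate on $\varrho$.

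The main obstacle is the delicate bookkeeping in the final step. Since the achievable upper bound $C_k$ may be only marginally larger than $D_k$, the weights cannot simply be $1/D_k$ but must strike a balance making $\sum_l|w_l|C_l$ summable yet $|w_k|D_k\to\infty$. The ``tail'' contribution from rows $l>\min F$ is controlled by bounds on $|F|$ in terms of $\min F$ inside $\mathcal{F}_\zeta$, which are sharp for $\zeta<\omega_1$ via the fine Schreier structure but degenerate at $\zeta=\omega_1$ where $\mathcal{F}_{\omega_1}=[\nn]^{<\omega}$. Handling the endpoint $\zeta=\omega_1$ uniformly within Theorem \ref{good} will require additionally arranging the bad witnesses of $\varrho_k$ themselves to have controlled length, which is precisely where the genuinely transfinite nature of the argument, and the interpolation between the extreme cases $\zeta=\omega$ (Theorem \ref{ffold}) and $\zeta=\omega_1$ (Theorem \ref{main}), comes into play.
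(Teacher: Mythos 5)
Your overall template (a uniform-boundedness-style contradiction: rows of growing badness, a weighted sum, reverse triangle inequality) is the same as the paper's, but two of the load-bearing steps do not work as written. First, the weights. You need $w_kD_k\to\infty$ to beat the upper estimate $\tilde{C}$ on the combined sequence, while simultaneously needing $\sum_l w_lC_l<\infty$ to tame the rows $l$ with $l\leqslant \min F$. These two demands are incompatible: $C_k$ is whatever finite constant the hypothesis happens to supply for $\varrho_k$, so $C_k/D_k$ can be unbounded, and your choice $w_k\asymp 1/(k^2C_k)$ gives $w_kD_k<1/k^2\to 0$, killing the lower bound. The paper escapes this by taking $w_k=D_k^{-1/2}$ and choosing $D_k$ \emph{after} $C_1,\ldots,C_{k-1}$ are known (condition $(1)$ in the proof), so the head $\sum_{l<k}w_l(l+rC_l)$ is controlled without ever requiring $\sum_l w_lC_l<\infty$; the constants $C_l$ for $l>k$ are simply never used. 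Second, your tail control. Bounding rows $l>\min F$ by $E_l\leqslant |F|$ requires $|F|$ to be bounded by a function of $\min F$ on $\mathcal{F}_\zeta$; this holds only for $\zeta\leqslant\omega$, not ``for $\zeta<\omega_1$'' as you claim, since for every $\zeta>\omega$ the family $\mathcal{F}_\zeta$ contains sets of every finite cardinality with arbitrarily large minimum (it is spreading and has rank $>\omega$). So your argument, as written, proves only the $\zeta\leqslant\omega$ case, and your closing paragraph essentially concedes that the witnesses $F$ need ``controlled length'' without saying how to arrange it.

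The missing idea is the paper's reduction to $\mu=\min\{\xi\leqslant\omega_1:\Gamma(\xi)=\infty\}$. Lemma \ref{work}$(ii)$ gives $\sup_{\xi<\mu}\Gamma(\xi)=\infty$, so the badness of row $k$ can be witnessed by some $F\in\mathcal{F}_{\xi_k}\cap\mathcal{F}_\mu$ with $\xi_k<\mu$ (Corollary \ref{home}$(iii)$); the point is not that $|F|$ is small but that $F$ lies in a family $\mathcal{F}_{\xi_k}$ on which $\Gamma(\xi_k)<\infty$. Each later row $\varrho_l$, $l>k$, is then arranged in advance (Corollary \ref{home}$(ii)$, with $S=\{\xi_1,\ldots,\xi_{l-1}\}$) to satisfy the \emph{uniform} estimate $r\Gamma(\xi_k)+1/r$ on $\mathcal{F}_{\xi_k}$ --- a bound known at stage $k$ and independent of $C_l$ --- which is what makes the tail summable via condition $(2)$. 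Without this replacement of the per-row constants $C_l$ by the uniform constants $\Gamma(\xi_k)$ on sets of bounded transfinite complexity, the tail cannot be controlled for any $\zeta>\omega$, and the theorem (in particular Theorem \ref{main}, the $\zeta=\omega_1$ case) is not reached.
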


\begin{rem}\upshape Note that if $\Gamma(\xi)<C$, then for any $K\in[\nn]$ and $\varrho\in B_R$, there exist $M\in[K]$ and $L\in[\nn]$ such that $$\{(M(F), L(F)): F\in \mathcal{F}_\xi\}\subset T(\varrho, C).$$ That is, the definition of $\Gamma(\xi)$ yields that there exists such a set $M$ which is a subset of $\nn$, but our hypotheses yield that, once $K\in[\nn]$ is fixed, $M$ can be taken to be a subset of $K$. To see this, note that if $\varrho=(x_n)_{n=1}^\infty$, then the subsequence $\varsigma=(y_n)_{n=1}^\infty = (x_{K(n)})_{n=1}^\infty\in B_R$ by the properties of $R$. From this and the definition of $\Gamma(\xi)$, it follows that there exist $N,L\in[\nn]$ such that $$\{(N(F), L(F)): F\in\mathcal{F}_\xi\}\subset T(\varsigma, C).$$  Now if $M(n)=K(N(n))$ for all $n\in\nn$, then $M\in[K]$ and for each $F\in\mathcal{F}_\xi$, $$(x_{M(n)})_{n\in F}= (x_{K(N(n))})_{n\in F}=(y_{N(n)})_{n\in F}\leqslant_C (g_{L(n)})_{n\in F},$$ since $(N(F), L(F))\in T(\varsigma, C)$.  Therefore $$\{(M(F), L(F)):F\in\mathcal{F}_\xi\}\subset T(\varrho, C).$$  

\label{Am}
\end{rem}

\begin{lemma} \begin{enumerate}[(i)]\item For any $\varrho\in R$ and $\zeta\leqslant \xi\leqslant \omega_1$, $\Gamma(\zeta, \varrho)\leqslant \Gamma(\xi, \varrho)$. Consequently, $\Gamma(\zeta)\leqslant \Gamma(\xi)$. \item For any limit ordinal $\xi\leqslant \omega_1$,  $\Gamma(\xi)\leqslant r\sup_{\zeta<\xi}\Gamma(\zeta)$. \item   $\Gamma(0)=0$. \item For any $\zeta, \xi<\omega_1$, $\Gamma(\zeta+\xi)\leqslant r(\Gamma(\zeta)+\Gamma(\xi))$. \end{enumerate} 

\label{work}

\end{lemma}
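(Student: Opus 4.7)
The lemma splits into four assertions of increasing depth; I sketch each in turn, reserving the countable-limit case of (ii) as the main difficulty.

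\textbf{Parts (iii) and (i).} Since $\mathcal{F}_0=\{\varnothing\}$ and the empty tuple lies in every $T(\varrho,C)$, we have $\Gamma(0,\varrho)=0$, giving (iii). For (i), given $\zeta\leqslant\xi\leqslant\omega_1$, invoke Proposition \ref{sfacts}(iii) to obtain $l\in\nn$ such that $l<F\in\mathcal{F}_\zeta$ implies $F\in\mathcal{F}_\xi$. If $(M,L)$ witnesses $\Gamma(\xi,\varrho)<C$, then the shifted pair $M'(k)=M(k+l)$, $L'(k)=L(k+l)$ witnesses $\Gamma(\zeta,\varrho)<C$: for any $F\in\mathcal{F}_\zeta$, the upshift $F+l$ is a spread of $F$ (hence in $\mathcal{F}_\zeta$) with $\min>l$, so $F+l\in\mathcal{F}_\xi$, yielding $(M'(F),L'(F))=(M(F+l),L(F+l))\in T(\varrho,C)$. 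Taking the sup over $\varrho\in B_R$ gives $\Gamma(\zeta)\leqslant\Gamma(\xi)$.

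\textbf{Part (iv).} The engine is the combinatorial decomposition
\[
\mathcal{F}_{\zeta+\xi}\subset\{F_1\cup F_2:\ F_1<F_2,\ F_1\in\mathcal{F}_\xi,\ F_2\in\mathcal{F}_\zeta\},
\]
established by transfinite induction on $\xi$ (the successor step reads off the prepend definition of $\mathcal{F}_{\zeta+\eta+1}$; the limit step uses the inductive hypothesis together with the almost monotone property, whose constant for $(\xi_n,\xi)$ is bounded by $n$ for the standard choice of $\xi_n\uparrow\xi$). Given $\varrho\in B_R$ and $\varepsilon>0$, choose $(M_\xi,L_\xi)$ witnessing $\mathcal{F}_\xi$-domination of $\varrho$ at $\Gamma(\xi)+\varepsilon$; the subsequence $(x_{M_\xi(k)})_k\in B_R$ admits $(N,L_\zeta)$ witnessing $\mathcal{F}_\zeta$-domination at $\Gamma(\zeta)+\varepsilon$. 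Set $M=M_\xi\circ N$; by the spreading of $\mathcal{F}_\xi$, the pair $(M,L_\xi\circ N)$ still has $\mathcal{F}_\xi$-domination, while $(M,L_\zeta)$ has $\mathcal{F}_\zeta$-domination. Unify the two $L$-sequences by picking $L\in[\nn]$ with $L(k)\geqslant\max(L_\xi(N(k)),L_\zeta(k))$; a single application of $r$-right dominance then gives $(M,L)$ with both $\mathcal{F}_\xi$- and $\mathcal{F}_\zeta$-dominations at constants $r(\Gamma(\xi)+\varepsilon)$ and $r(\Gamma(\zeta)+\varepsilon)$ respectively. For $F=F_1\cup F_2\in\mathcal{F}_{\zeta+\xi}$, bimonotonicity of $(g_n)$ gives
\[
\Bigl\|\sum_{n\in F}a_n x_{M(n)}\Bigr\|\leqslant\Bigl\|\sum_{n\in F_1}a_n x_{M(n)}\Bigr\|+\Bigl\|\sum_{n\in F_2}a_n x_{M(n)}\Bigr\|\leqslant r(\Gamma(\xi)+\Gamma(\zeta)+2\varepsilon)\Bigl\|\sum_{n\in F}a_n g_{L(n)}\Bigr\|,
\]
so $\Gamma(\zeta+\xi,\varrho)\leqslant r(\Gamma(\zeta)+\Gamma(\xi))+O(\varepsilon)$; letting $\varepsilon\to 0$ and taking sup over $\varrho$ finishes (iv).

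\textbf{Part (ii).} For $\xi=\omega_1$, I argue by tree rank. Set $C=\sup_{\zeta<\omega_1}\Gamma(\zeta)$ and fix $\varrho\in B_R$, $\varepsilon>0$. For every countable $\zeta$, the hypothesis furnishes an $\mathcal{F}_\zeta$-embedding into $T(\varrho,C+\varepsilon)$ of the form $F\mapsto(M(F),L(F))$; this forces $\mathrm{rank}(T(\varrho,C+\varepsilon))\geqslant\zeta+1$ for every $\zeta<\omega_1$, so by Theorem \ref{tree} the tree is ill-founded, and an infinite branch delivers $M,L\in[\nn]$ with $(x_{M(n)})\leqslant_{C+\varepsilon}(g_{L(n)})$; hence $\Gamma(\omega_1,\varrho)\leqslant C+\varepsilon$, with no factor $r$ incurred. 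For $\xi<\omega_1$ a limit with $\xi_n\uparrow\xi$, the plan is a diagonal construction: iteratively select a nested chain $\tilde M^{(n+1)}\subset\tilde M^{(n)}$ and $\tilde L^{(n+1)}\subset\tilde L^{(n)}$ of infinite subsets such that $(\tilde M^{(n)},\tilde L^{(n)})$ witnesses $\mathcal{F}_{\xi_n}$-domination at $C+\varepsilon$ (using Remark \ref{Am} to prescribe $\tilde M^{(n)}\subset\tilde M^{(n-1)}$, and a spreading step to prescribe $\tilde L^{(n)}\subset\tilde L^{(n-1)}$). Setting $m_n=\tilde M^{(n)}(1)$ and $l_n=\tilde L^{(n)}(1)$, for $F\in\mathcal{F}_\xi$ with $\min F\geqslant n$ and $F\in\mathcal{F}_{\xi_n}$, the entries $(x_{m_k})_{k\in F}$ form a subsequence of $\tilde M^{(n)}$ at positions $G\subset\nn$, and a single application of $r$-right dominance on the $L$-side bridges the discrepancy between $F$ and $G$, producing the desired estimate at constant $r(C+\varepsilon)$.

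\textbf{Main obstacle.} The countable-limit case of (ii) is the genuinely delicate point. Unlike the $\omega_1$ case, the tree-rank bound $\mathrm{rank}(T(\varrho,C+\varepsilon))\geqslant\sup_{\zeta<\xi}(\zeta+1)=\xi$ falls one short of $\xi+1=\mathrm{rank}(\mathcal{F}_\xi)$, so one cannot conclude directly from Theorem \ref{tree}. The diagonal construction must be arranged so that the position-set $G$ of the $m_k$'s inside $\tilde M^{(n)}$ (a downshift-spread of $F$) can be related to an element of $\mathcal{F}_{\xi_n}$ and so that the unavoidable factor $r$ is absorbed into one spreading step rather than compounded across the infinite diagonal; maintaining the nested chain of subsets throughout and deferring every spreading correction to a single post-processing via $r$-right dominance is what yields the bound $r\sup_{\zeta<\xi}\Gamma(\zeta)$.
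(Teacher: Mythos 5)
Parts (i), (iii), and the $\xi=\omega_1$ case of (ii) match the paper's argument and are fine. The two remaining pieces each contain a concrete step that fails as written.

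In the countable-limit case of (ii), your diagonal $m_n=\tilde M^{(n)}(1)$ does not do what you need. For $F\in\mathcal{F}_\xi$ with $n\leqslant F\in\mathcal{F}_{\xi_n}$, you must know that the position set $G$ of $(m_k)_{k\in F}$ inside $\tilde M^{(n)}$ is a \emph{spread} of $F$, so that $G\in\mathcal{F}_{\xi_n}$ and the $\mathcal{F}_{\xi_n}$-domination of $(\tilde M^{(n)},\tilde L^{(n)})$ applies. With first elements this fails: if $\tilde M^{(k)}=\{k,k+1,\ldots\}$ then $m_k=k$ sits at position $k-n+1<k$ in $\tilde M^{(n)}$, so $G$ is a compression, not a spread, of $F$ (and the $m_k$ need not even be strictly increasing). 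The missing idea is to take the $n$-th element of the $n$-th set, $M(n)=M_n(n)$: since $M_n\subset M_k$ for $k\leqslant n$, the $n$-th smallest element of $M_n$ cannot occur before position $n$ in $M_k$, which forces the positions $s^n_k\geqslant n$ and makes $G=(s^n_k:n\in F)$ a spread of $F$. Relatedly, your plan to nest $\tilde L^{(n+1)}\subset\tilde L^{(n)}$ has no mechanism behind it (Remark \ref{Am} lets you prescribe the $M$-side only, and forcing the $L$-side into a prescribed set by spreading costs a factor $r$ at every stage); the clean fix is not to nest the $L$'s at all but to set $L(n)=\max\{L_1(s^n_1),\ldots,L_n(s^n_n)\}$ and pay $r$ exactly once at the end.

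In (iv), the literal inclusion $\mathcal{F}_{\zeta+\xi}\subset\{F_1\cup F_2: F_1<F_2,\ F_1\in\mathcal{F}_\xi,\ F_2\in\mathcal{F}_\zeta\}$ is not available by induction. At a limit stage, $\mathcal{F}_{\zeta+\xi}$ is built from an arbitrary fixed sequence $\eta_n\uparrow\zeta+\xi$ that need not have the form $\zeta+\xi_n$, and the almost monotone property only converts $\mathcal{F}_{\eta_n}$ into $\mathcal{F}_{\zeta+\xi_m}$ for sets with sufficiently large minimum; sets with small minimum escape the inclusion. What is true, and what the argument needs, is the relativized statement: the family $\mathcal{F}=\{G\cup F:G\in\mathcal{F}_\xi,\ F\in\mathcal{F}_\zeta,\ G<F\}$ is regular of rank $\zeta+\xi+1=\mathrm{rank}(\mathcal{F}_{\zeta+\xi})$, so there exists $P\in[\nn]$ with $\{P(F):F\in\mathcal{F}_{\zeta+\xi}\}\subset\mathcal{F}$; one then runs your triangle-inequality estimate along $M(n)=M_2(P(n))$, $L(n)=L_3(P(n))$. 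With these two repairs your argument coincides with the intended proof.
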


\begin{proof} We will show several inequalities in the proof. If the majorizing quantity is infinite in any of these inequalities, the inequality holds trivially. Therefore we omit this trivial case in each inequality in the proof.

$(i)$  We first show that for a fixed $\varrho\in R$, $\xi\mapsto \Gamma(\xi, \varrho)$ is non-decreasing. Fix $\zeta\leqslant \xi\leqslant \omega_1$. Suppose that $\Gamma(\xi, \varrho)<C<\infty$.   Fix $M_1, L_1\in[\nn]$ such that $$\{(M_1(F), L_1(F)): F\in\mathcal{F}_\xi\}\subset T(\varrho, C).$$   By the almost monotone property of the fine Schreier families (or from the fact that $\mathcal{F}_\zeta\subset \mathcal{F}_{\omega_1}$ if $\xi=\omega_1$), there exists $l\in\nn$ such that if $l<F\in\mathcal{F}_\zeta$, then  $F\in\mathcal{F}_\xi$.   Let $M(n)=M_1(n+l)$ and $L(n)=L_1(n+l)$ for all $n\in\nn$.   Now fix $F\in\mathcal{F}_\zeta$ and let $G=(n+l: n\in F)$. Since $G$ is a spread of $F$, $G\in\mathcal{F}_\zeta$. Since $l<G$, $G\in\mathcal{F}_\xi$.  Then $$(x_{M(n)})_{n\in F} = (x_{M_1(n+l)})_{n\in F}= (x_{M_1(n)})_{n\in G}\leqslant_C (g_{L_1(n)})_{n\in G} = (g_{L_1(n+l)})_{n\in F}= (g_{L(n)})_{n\in F}.$$  Thus $$\{(M(F), L(F)):F\in\mathcal{F}_\zeta\}\subset T(\varrho, C),$$ and $\Gamma(\zeta, \varrho)\leqslant C$. Since $C>\Gamma(\xi, \varrho)$ was arbitrary, $\Gamma(\zeta, \varrho)\leqslant \Gamma(\xi, \varrho)$.  For the second statement of $(i)$, we note that $$\Gamma(\zeta)=\sup_{\varrho\in B_R} \Gamma(\zeta, \varrho) \leqslant \sup_{\varrho\in B_R} \Gamma(\xi, \varrho)=\Gamma(\xi).$$

$(ii)$ We next show that for each limit ordinal $\xi\leqslant \omega_1$, $\Gamma(\xi)\leqslant r\sup_{\zeta<\xi}\Gamma(\zeta)$.  Assume that $\xi\leqslant \omega_1$ is a limit ordinal and $\sup_{\zeta<\xi}\Gamma(\zeta)<C<\infty$.  We consider the cases $\xi<\omega_1$ and $\xi=\omega_1$.   

First suppose that $\xi<\omega_1$. Let $\xi_k\uparrow \xi$ be such that $$\mathcal{F}_\xi=\{F: \exists k\leqslant F\in \mathcal{F}_{\xi_k}\}.$$ Fix $\varrho\in B_R$. As noted in  Remark \ref{Am}, we may select $M_1\supset M_2\supset \ldots$ and $L_1, L_2, \ldots$ such that for all $k\in\nn$, $$\{(M_k(F), L_k(F)): F\in\mathcal{F}_{\xi_k}\}\subset T(\varrho, C).$$  That is, the content of Remark \ref{Am} explains how  we know that $M_{n+1}$ may be chosen as a subset of $M_n$.  For each $n\in\nn$, we note that $M_n(n)\in \cap_{k=1}^n M_k$. Therefore  we can choose integers $s^n_1, \ldots, s^n_n$ such that $M_n(n)=M_k(s^n_k)$ for each $1\leqslant k\leqslant n$. For $1\leqslant k\leqslant n\in\nn$,  since the $n^{th}$ smallest member of $M_n$ cannot occur before the $n^{th}$ position in $M_k$, $s^n_k\geqslant n$ for each $1\leqslant k\leqslant n$.  For each $n\in\nn$, let $L(n)=\max\{L_1(s^n_1), L_2(s^n_2), \ldots, L_n(s^n_n)\}$ and let $M(n)=M_n(n)$. We claim that $$\{(M(F), L(F)): F\in\mathcal{F}_\xi\}\subset T(\varrho, rC).$$  To see this, fix $F\in\mathcal{F}_\xi$ and fix $k\in\nn$ such that  $k\leqslant F\in \mathcal{F}_{\xi_k}$.  Since $s^n_k$ is defined for each $ k\leqslant n\in \nn$ and since $k\leqslant F$, $s^n_k$ is defined for each $n\in F$.    Since $\mathcal{F}_{\xi_k}$ is spreading and since $s^n_k\geqslant n$ for each $ k\leqslant n\in\nn$, $G:=(s^n_k: n\in F)$ is a spread of $F$, and therefore also a member of $\mathcal{F}_{\xi_k}$. Then by $r$-right dominance and the properties of $M_k$, $L_k$, and since $L_k(s^n_k) \leqslant L(n)$ for each $n\in F$, $$(x_{M(n)})_{n\in F}= (x_{M_n(n)})_{n\in F} = (x_{M_k(s^n_k)})_{n\in F} =(x_{M_k(n)})_{n\in G} \leqslant_C (g_{L_k(n)})_{n\in G} = (g_{L_k(s^n_k)})_{n\in F} \leqslant_r (g_{L(n)})_{n\in F}.$$   This gives the desired inclusion and completes the $\xi<\omega_1$ case. 

Now suppose that $\xi=\omega_1$.   Fix $\varrho\in B_R$.   For  any $\zeta<\omega_1$, there exist $M_\zeta, L_\zeta\in [\nn]$ such that $$\{(M_\zeta(F), L_\zeta(F)): F\in \mathcal{F}_\zeta\}\subset T(\varrho, C).$$   The map $\mathcal{F}_\zeta\ni F\mapsto (M_\zeta(F), L_\zeta(F))\in T(\varrho, C)$ defines a tree embedding of $\mathcal{F}_\zeta$ into $T(\varrho, C)$. From this it follows that $\text{rank}(T(\varrho, C))\geqslant \text{rank}(\mathcal{F}_\zeta)=\zeta+1$. Since this is true for any $\zeta<\omega_1$, $T(\varrho, C)$ is ill-founded by Theorem \ref{tree}. From this it follows that there exist $M,L\in [\nn]$ such that $(M(n), L(n))_{n=1}^t\in T(\varrho, C)$ for all $t\in\nn$. Therefore $(x_{M(n)})_{n=1}^\infty\leqslant_C (g_{L(n)})_{n=1}^\infty$, and $$\{(M(F), N(F)): F\in \mathcal{F}_{\omega_1}\}\subset T(\varrho, C).$$   Thus $\Gamma(\omega_1)\leqslant C\leqslant rC$, and this completes the $\xi=\omega_1$ case.

$(iii)$ Since $$\{\varnothing\}= \{(F,F): F\in\mathcal{F}_0\}\subset T(\varrho, C)$$ for any $C>0$, $\Gamma(0)=0$.

$(iv)$ Fix $\zeta, \xi<\omega_1$.  Fix $C_1>\Gamma(\zeta)$ and $C_2>\Gamma(\xi)$.  Fix $\varrho\in B_R$ and note that there exist $M_1, L_1\in [\nn]$ such that $$\{(M_1(F), L_1(F)):F\in\mathcal{F}_\zeta\}\subset T(\varrho, C_1).$$   As noted in Remark \ref{Am}, we can find $M_2\in [M_1]$ and $L_2$ such that $$\{(M_2(F), L_2(F)):F\in\mathcal{F}_\xi\}\subset T(\varrho, C_2).$$  For each $n\in\nn$, we can choose $s_n\in\nn$ such that $M_2(n)=M_1(s_n)$. It follows that $s_1<s_2<\ldots$ and for each $n\in\nn$,  $n\leqslant s_n$. For each $n\in\nn$, define $$L_3(n)=\max\{L_2(n), L_1(s_n)\}.$$   Note that for any $F\in \mathcal{F}_\zeta$ and $G\in \mathcal{F}_\xi$, with $F_2=(s_n: n\in F)\in \mathcal{F}_\zeta$,  $$(x_{M_2(n)})_{n\in F}= (x_{M_1(s_n)})_{n\in F}=(x_{M_1(n)})_{n\in F_2}\leqslant_{C_1} (g_{L_1(n)})_{n\in F_2}= (g_{L_1(s_n)})_{n\in F}\leqslant_r (g_{L_3(n)})_{n \in F}$$   and $$(x_{M_2(n)})_{n\in G} \leqslant_{C_2} (g_{L_2(n)})_{n\in G}\leqslant_r (g_{L_3(n)})_{n\in G}.$$

Let $$\mathcal{F}=\{G\cup F: G\in \mathcal{F}_\xi, F\in \mathcal{F}_\zeta,  G< F\}.$$ As shown in \cite[Proposition $3.1$]{Con}, $\mathcal{F}$ is regular with $$\text{rank}(\mathcal{F})=\zeta+\xi+1=\text{rank}(\mathcal{F}_{\zeta+\xi}),$$ which implies the existence of some $P\in[\nn]$ such that $$\{P(F):F\in \mathcal{F}_{\zeta+\xi}\}\subset \mathcal{F}.$$  For each $n\in\nn$,  define $M(n)=M_2(P(n))$   and $L(n)=L_3(P(n))$.  Fix $H\in \mathcal{F}_{\zeta+\xi}$ and scalars $(a_n)_{n\in H}$.  It follows from our choice of $P$ that $P(H)\in \mathcal{F}$, so that  $P(H)=G\cup F$ for some $G<F$ such that $G\in\mathcal{F}_\xi$ and $F\in \mathcal{F}_\zeta$.  Define $(b_n)_{n\in P(H)}=(b_{P(n)})_{n\in H}$ by letting $b_{P(n)}=a_n$.       Then by the preceding paragraph and the bimonotonicity of $(g_n)_{n=1}^\infty$, \begin{align*} \|\sum_{n\in H} a_n x_{M(n)}\| & = \|\sum_{n\in P(H)} b_n x_{M_2(n)}\| \leqslant \|\sum_{n\in G} b_n x_{M_2(n)} \|+\|\sum_{n\in F}b_nx_{M_2(n)}\|\\ &  \leqslant rC_2\|\sum_{n\in G}b_n g_{L_3(n)}\|+rC_1\|\sum_{n\in F} b_n g_{L_3(n)}\| \\ & \leqslant r(C_1+C_2) \|\sum_{n\in P(H)} b_n g_{L_3(n)}\| = r(C_1+C_2)\|\sum_{n\in H} a_n g_{L(n)}\|.\end{align*}   This shows that $$\{(M(F), L(F)): F\in\mathcal{F}_{\zeta+\xi}\}\subset T(\varrho, r(C_1+C_2)).$$ Since $C_1>\Gamma(\zeta)$ and $C_2>\Gamma(\xi)$ were arbitrary, we are done.

\end{proof}

\begin{corollary} Either $\{\xi\leqslant \omega_1: \Gamma(\xi)=\infty\}$ is empty or there exists $0<\gamma<\omega_1$ such that $\min \{\xi\leqslant \omega_1: \Gamma(\xi)=\infty\}=\omega^\gamma$.  Moreover, in the case that $\{\xi\leqslant \omega_1: \Gamma(\xi)=\infty\}\neq \varnothing$ and $\omega^\gamma=\min\{\xi\leqslant \omega_1: \Gamma(\xi)=\infty\}$, $\sup \{\Gamma(\xi): \xi<\omega^\gamma\}=\infty$.

\label{break}
\end{corollary}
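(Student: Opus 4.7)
My plan is to assume the set $S := \{\xi\leqslant \omega_1: \Gamma(\xi)=\infty\}$ is nonempty, let $\xi_0 := \min S$, and use the four parts of Lemma \ref{work} to show $\xi_0$ must have the form $\omega^\gamma$ with $0 < \gamma < \omega_1$, before handling the supremum claim at the end.

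The first stage is to establish that $\xi_0$ is a nonzero countable limit ordinal. The case $\xi_0 = 0$ is ruled out by (iii). To rule out a successor $\xi_0 = \eta + 1$, I first note that $\Gamma(1) \leqslant 1$: since $\mathcal{F}_1 = \{\varnothing\}\cup\{(n) : n \in \nn\}$, the requirement is just that a single vector $x_{M(n)}$ is $1$-dominated by a single $g_{L(n)}$, which is immediate from $B_R \subset B_{\ell_\infty(X)}$ and $\|g_n\| = 1$. Then with $\Gamma(\eta) < \infty$ by minimality of $\xi_0$, property (iv) applied to $\zeta = \eta$, $\xi = 1$ gives $\Gamma(\xi_0) \leqslant r(\Gamma(\eta) + 1) < \infty$, a contradiction. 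To exclude $\xi_0 = \omega_1$, I would argue that if $\Gamma(\zeta) < \infty$ for every $\zeta < \omega_1$, then $\sup_{\zeta<\omega_1}\Gamma(\zeta)$ is necessarily finite: otherwise one can pick $\zeta_n < \omega_1$ with $\Gamma(\zeta_n) \geqslant n$, and the ordinal $\zeta^* := \sup_n \zeta_n$ remains countable by the regularity of $\omega_1$, yet (i) forces $\Gamma(\zeta^*) = \infty$, a contradiction; property (ii) would then yield $\Gamma(\omega_1) < \infty$, contradicting $\xi_0 = \omega_1$.

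Next I would invoke the Cantor normal form to pin down the shape of $\xi_0$. Writing $\xi_0 = \omega^{\gamma_1}n_1 + \cdots + \omega^{\gamma_k}n_k$ with $\gamma_1 > \cdots > \gamma_k$ and $n_i \geqslant 1$, if either $k \geqslant 2$ or $n_1 \geqslant 2$, I can decompose $\xi_0 = \zeta + \xi$ as an ordinal sum with $0 < \zeta, \xi < \xi_0$: when $k \geqslant 2$, take $\xi = \omega^{\gamma_k}n_k$ with $\zeta$ the remaining prefix; when $k = 1$ and $n_1 \geqslant 2$, take $\zeta = \omega^{\gamma_1}(n_1-1)$ and $\xi = \omega^{\gamma_1}$. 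By minimality both $\Gamma(\zeta)$ and $\Gamma(\xi)$ are finite, so (iv) forces $\Gamma(\xi_0) < \infty$, contradicting $\xi_0 \in S$. Hence $k = n_1 = 1$ and $\xi_0 = \omega^\gamma$; and since $\xi_0$ is a nonzero countable limit ordinal, it follows that $0 < \gamma < \omega_1$.

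For the supremum claim, I would argue by contradiction: if $K := \sup\{\Gamma(\xi) : \xi < \omega^\gamma\} < \infty$, then because $\gamma > 0$ makes $\omega^\gamma$ a limit ordinal, property (ii) would give $\Gamma(\omega^\gamma) \leqslant rK < \infty$, contradicting $\omega^\gamma \in S$. The step I expect to be the main obstacle is the elimination of $\xi_0 = \omega_1$, which is the only part of the argument that does not reduce to the purely algebraic identities of Lemma \ref{work} and instead leans on the set-theoretic fact that any countable supremum of countable ordinals is itself countable.
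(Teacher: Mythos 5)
Your proposal is correct and follows essentially the same route as the paper: monotonicity plus the subadditivity estimate (iv) to force additive indecomposability of the minimum (the paper cites this directly rather than via Cantor normal form, but it is the same idea), the countable-supremum/regularity-of-$\omega_1$ trick combined with (ii) to exclude $\omega_1$ and get $\gamma<\omega_1$, and (ii) again for the final supremum claim. No gaps.
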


\begin{proof} Assume $\varnothing\neq \{\xi\leqslant \omega_1: \Gamma(\xi)=\infty\}$.    Let $\mu=\min\{\xi\leqslant \omega_1: \Gamma(\xi)=\infty\}$. Note that by Lemma \ref{work}$(i)$, $\{\xi\leqslant \omega_1: \Gamma(\xi)=\infty\}=[\mu, \omega_1]$.  Since $\Gamma(0)=0$, $\mu>0$.    If $\zeta, \xi<\mu$, then by Lemma \ref{work}$(iv)$, $\Gamma(\zeta+\xi)\leqslant r(\Gamma(\zeta)+\Gamma(\xi))<\infty$, and $\zeta+\xi<\mu$.  From this and standard properties of ordinals, there exists $\gamma$ such that $\mu=\omega^\gamma$. To complete the first statement, it remains to show that $0<\gamma<\omega_1$. Since $\mathcal{F}_1$ consists of sets of cardinality at most $1$, $\Gamma(1)\leqslant 1$, from which it follows that $\Gamma(1)=\Gamma(\omega^0)<\infty$, and $\gamma>0$.   If $\sup_{\xi<\omega_1}\Gamma(\xi)<\infty$, then by Lemma \ref{work}$(i)$ and $(ii)$,  $$\infty =\Gamma(\mu) \leqslant \Gamma(\omega_1)=\Gamma(\omega_1)\leqslant r\sup_{\xi<\omega_1} \Gamma(\xi)<\infty,$$ a contradiction.  Therefore $\sup_{\xi<\omega_1}\Gamma(\xi)=\infty$.  From this it follows that for each $n\in\nn$, $\{\xi<\omega_1: \Gamma(\xi)>n\}$ is non-empty, and $$\nu:=\sup_n \min \{\xi<\omega_1: \Gamma(\xi)>n\}<\omega_1.$$  Again by Lemma \ref{work}$(i)$, $\Gamma(\nu)=\infty$, so $\omega^\gamma=\mu\leqslant \nu<\omega_1$.  From this it follows that $\gamma<\omega_1$. 

From Lemma \ref{work}$(ii)$, $$\infty= \frac{1}{r} \infty= \frac{1}{r}\Gamma(\omega^\gamma)\leqslant \sup_{\xi<\omega^\gamma} \Gamma(\xi).$$

\end{proof}

The next corollary is what we will use to choose the rows of our array which we combine to produce a contradiction in our proof of Theorem \ref{good}.  We note that the hypotheses $(a)$-$(c)$ of the following corollary are the contradiction hypotheses for our eventual proof of Theorem \ref{good}, and, as we shall see, it is not possible for these three hypotheses to simultaneously hold.

\begin{corollary} Suppose that for $0<\mu<\omega_1$, \begin{enumerate}[(a)]\item $\Gamma(\xi)<\infty$ for all $\xi<\mu$, \item $\Gamma(\mu)=\infty$, \item for each $\varrho\in R$, $\Gamma(\mu, \varrho)<\infty$. \end{enumerate} Then for any $D>0$ and any finite subset $S$ of $[0, \mu)$, there exist $\xi<\mu$, $\varrho=(x_n)_{n=1}^\infty\in B_R$, $N\in[\nn]$, and $C<\infty$ such that \begin{enumerate}[(i)]\item for any $F\in \mathcal{F}_\mu$ and scalars $(a_n)_{n\in F}$, $\|\sum_{n\in F} a_nx_n\|\leqslant C\|\sum_{n\in F}a_ng_{N(n)}\|$, \item for any $\zeta\in S$, $F\in \mathcal{F}_\zeta$, and scalars $(a_n)_{n\in F}$, $\|\sum_{n\in F}a_nx_n\|\leqslant (r\Gamma(\zeta)+\frac{1}{r})\|\sum_{n\in F}a_ng_{N(n)}\|$, \item for any $M,L\in[\nn]$, there exist $F\in \mathcal{F}_\xi\cap \mathcal{F}_\mu$ and scalars $(a_n)_{n\in F}$ such that $\|\sum_{n\in F}a_n x_{M(n)}\|>D\|\sum_{n\in F}x_n g_{L(n)}\|$. \end{enumerate}

\label{home}
\end{corollary}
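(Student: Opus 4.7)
The plan is to produce the desired $\varrho$ by starting from a witness $\varrho'\in B_R$ of the failure $\Gamma(\xi,\varrho')>D$ at a carefully chosen $\xi<\mu$ and then refining it by a simultaneous diagonalization over the levels $\{\mu\}\cup S$ to obtain the sharp upper bounds (i) and (ii) while preserving the badness needed for (iii).

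First, by Corollary \ref{break} together with hypotheses (a) and (b), $\sup_{\zeta<\mu}\Gamma(\zeta)=\infty$; choose $\xi<\mu$ with $\Gamma(\xi)>D$ and use almost-monotonicity (Proposition \ref{sfacts}(iii)) to fix $l_*\in\nn$ such that every $F\in\mathcal{F}_\xi$ with $l_*<F$ lies in $\mathcal{F}_\mu$. Pick $\varrho'=(x'_n)\in B_R$ with $\Gamma(\xi,\varrho')>D$. Enumerate $S=\{\zeta_1<\ldots<\zeta_k\}$, set $\epsilon=1/r^2$, and define $C_\mu'=\Gamma(\mu,\varrho')+\epsilon$ (finite by (c)) and $C_{\zeta_j}'=\Gamma(\zeta_j)+\epsilon$ (finite by (a)).

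Second, iteratively construct a chain $\nn\supset M_0\supset M_1\supset\ldots\supset M_k$ in $[\nn]$ and sets $L_0,L_1,\ldots,L_k\in[\nn]$: the definition of $\Gamma(\mu,\varrho')<C_\mu'$ yields $M_0,L_0$ with $\{(M_0(F),L_0(F)):F\in\mathcal{F}_\mu\}\subset T(\varrho',C_\mu')$, and for each $1\leqslant j\leqslant k$, Remark \ref{Am} applied with $K=M_{j-1}$ (legitimate since $\Gamma(\zeta_j)<C_{\zeta_j}'$) delivers $M_j\in[M_{j-1}]$ and $L_j$ with $\{(M_j(F),L_j(F)):F\in\mathcal{F}_{\zeta_j}\}\subset T(\varrho',C_{\zeta_j}')$. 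Then imitate the limit step from the proof of Lemma \ref{work}(ii): put $M=M_k$ (so $M\subset M_j$ for every $0\leqslant j\leqslant k$), choose $s^n_j\geqslant n$ with $M(n)=M_j(s^n_j)$, define $N(n)=\max_{0\leqslant j\leqslant k}L_j(s^n_j)$, and set $\varrho=(x_n)=(x'_{M(n)})\in B_R$. For each relevant level $\zeta\in\{\mu\}\cup S$ and each $F$ in the corresponding family, $G_j:=(s^n_j:n\in F)$ is a spread of $F$, hence still in that family, which lets the prescribed domination for $\varrho'$ pass to $\varrho$; one invocation of $r$-right dominance then converts $(g_{L_j(s^n_j)})_{n\in F}$ into $(g_{N(n)})_{n\in F}$. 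This yields (i) with $C=rC_\mu'<\infty$ and (ii) with constant $rC_{\zeta_j}'=r\Gamma(\zeta_j)+1/r$.

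Finally, for (iii), note that the subsequence operation only makes the infimum defining $\Gamma(\xi,\cdot)$ weakly larger, so $\Gamma(\xi,\varrho)\geqslant\Gamma(\xi,\varrho')>D$. Given arbitrary $M,L\in[\nn]$, apply $\Gamma(\xi,\varrho)>D$ to the shifted pair $(M(n+l_*),L(n+l_*))$ to produce $F\in\mathcal{F}_\xi$ and scalars $(a_n)_{n\in F}$ with the desired strict inequality; relabelling via $G=F+l_*\in[\nn]^{<\omega}$ and $b_{n+l_*}=a_n$ rewrites that inequality as $\|\sum_{m\in G}b_m x_{M(m)}\|>D\|\sum_{m\in G}b_m g_{L(m)}\|$, and $G$ is a spread of $F$ (so $G\in\mathcal{F}_\xi$) with $\min G>l_*$ (so $G\in\mathcal{F}_\mu$ by choice of $l_*$).

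The principal difficulty is securing the upper bounds in (ii) for several distinct levels $\zeta\in S$ \emph{and} the level $\mu$ using a common $N$, without the constants accumulating factors of $r$ for each level; this is precisely what the diagonal-$L$ trick of Lemma \ref{work}(ii) achieves, and its transfer to the finite list $\{\mu\}\cup S$ is the core step of the argument.
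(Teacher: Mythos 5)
Your proposal is correct and follows essentially the same route as the paper's proof: choose $\xi<\mu$ with $\Gamma(\xi)>D$ and a witness $\varrho'$, build the nested chain $M_0\supset\cdots\supset M_k$ with the levels $\mu,\zeta_1,\ldots,\zeta_k$, diagonalize the $L_j$'s via $N(n)=\max_j L_j(s^n_j)$ paying one factor of $r$, and recover (iii) by shifting past the almost-monotonicity threshold $l_*$. The only (harmless) cosmetic difference is that you justify (iii) by noting $\Gamma(\xi,\cdot)$ is monotone under passing to subsequences, where the paper instead unwinds the composed index map $P=K\circ P_1$ explicitly.
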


\begin{proof}  By Lemma \ref{work}$(ii)$, condition $(b)$ yields that there exists $\xi<\mu$ such that $\Gamma(\xi)>D$.    By the definition of $\Gamma(\xi)$, there exists $\varsigma=(y_n)_{n=1}^\infty \in B_R$ such that for any $P,Q\in[\nn]$, $$\{(P(F), Q(F)): F\in\mathcal{F}_\xi\}\not\subset T(\varsigma, D).$$

Condition $(c)$ yields the existence of some $M_0, L_0\in [\nn]$ and $C<\infty$ such that $$\{(M_0(F), L_0(F)): F\in\mathcal{F}_\mu\}\subset T(\varsigma, C/r).$$   If $S=\varnothing$,  let $K=M_0$ and $N=L_0$.  Otherwise we enumerate $S=\{\xi_1, \ldots, \xi_t\}$ and recursively select $M_0\supset M_1\supset \ldots \supset M_t\in[\nn]$ and $L_1, \ldots, L_t$ such that for each $1\leqslant i\leqslant t$, $$\{(M_i(F), L_i(F)): F\in\mathcal{F}_{\xi_i}\}\subset T(\varsigma, \Gamma(\xi_i)+1/r^2).$$  We recall that we are able to choose $M_{n+1}\in [M_n]$, as noted in Remark \ref{Am}.   For each $0\leqslant i\leqslant t$ and $n\in\nn$, choose $s_i^n\in\nn$ such that $M_t(n)=M_i(s_i^n)$ and note that $s_i^n\geqslant n$.    For each $n\in\nn$, let $K(n)=M_t(n)$ and $$N(n)=\max\{L_0(s^n_0), L_1(s_1^n),  \ldots, L_t(s_t^n)\}.$$    

Let $\varrho=(x_n)_{n=1}^\infty=(y_{K(n)})_{n=1}^\infty\in B_R$. We now verify that $(i)$, $(ii)$, and $(iii)$ are satisfied with this $\varrho$, $\xi$, $N$, $C$.  For $F\in\mathcal{F}_\mu$, $G:= (s_0^n)_{n\in F}\in \mathcal{F}_\mu$. Note that $K(F)=M_0(G)$, and since $(M_0(G), L_0(G))\in T(\varsigma, C/r)$ and $N(n)\geqslant L_0(s_0^n)$ for all $n\in\nn$, condition $(i)$ is satisfied.   Indeed, fix scalars $(a_n)_{n\in F}$, let $b_{s_0^n}=a_n$ for $n\in F$, and observe that  \begin{align*} \|\sum_{n\in F}a_n x_n\| & = \|\sum_{n\in F} a_n y_{K(n)}\| = \|\sum_{n\in G} b_n y_{M_0(n)}\| \leqslant \frac{C}{r}\|\sum_{n\in G} b_n g_{L_0(n)}\| = \frac{C}{r} \|\sum_{n\in F}a_ng_{L_0(s_0^n)}\| \\ &  \leqslant C\|\sum_{n\in F}a_n g_{N(n)}\|.\end{align*}

If $S=\varnothing$, $(ii)$ is vacuous. If $S\neq \varnothing$, fix $1\leqslant i\leqslant t$, $F\in\mathcal{F}_{\xi_i}$, and let $G=(s_i^n)_{n\in F}\in \mathcal{F}_{\xi_i}$. Since $K(F)=M_i(G)$, and since $(M_i(G), L_i(G))\in T(\varsigma, \Gamma(\xi_i)+1/r^2)$ and $N(n)\geqslant L_i(s_i^n)$ for all $n\in\nn$, condition $(ii)$ is satisfied, analogously to the last part of the previous paragraph.

We now prove $(iii)$. Now fix $M,L\in[\nn]$.  By the almost monotone property of the fine Schreier families, there exists $l\in \nn$ such that $l<F\in \mathcal{F}_\xi$ implies $F\in \mathcal{F}_\mu$.    For each $n\in\nn$, let $P_1(n)=M(n+l)$, $P(n)=K(P_1(n))$, and  $Q(n)=L(n+l)$.  By our choice of $\varsigma$,  $$\{(P(F), Q(F)): F\in\mathcal{F}_\xi\}\not \subset T(\varsigma, D).$$ From this it follows that there exist $G\in \mathcal{F}_\xi$ and scalars $(b_n)_{n\in G}$ such that $\|\sum_{n\in G} b_n y_{P(n)} \| >D\|\sum_{n\in G} b_n g_{Q(n)}\|$. Let $F=(n+l: n\in G)\in \mathcal{F}_\xi\cap \mathcal{F}_\mu$.  Define $a_{n+l}=b_n$ for each $n\in G$. Then \begin{align*}  \|\sum_{n\in F} a_n x_{M(n)}\| & = \|\sum_{n\in G} a_{n+l} x_{M(n+l)}\| = \|\sum_{n\in G}b_n y_{K(P_1(n))}\| = \|\sum_{n\in G} b_ny_{P(n)}\| \\ & >D\|\sum_{n\in G}b_n g_{Q(n)}\|=D\|\sum_{n\in G} a_{n+l} g_{L(n+l)}\| = D\|\sum_{n\in F}a_n g_{L(n)}\|. \end{align*}

\end{proof}

\begin{proof}[Proof of Theorem \ref{good}] Seeking a contradiction, assume $\zeta\leqslant \omega_1$ is such that for every $\varrho\in R$, $\Gamma(\zeta, \varrho)<\infty$, but $\Gamma(\zeta)=\infty$.  Let $\mu=\min\{\xi\leqslant \omega_1: \Gamma(\xi)=\infty\}\leqslant \zeta$. By Lemma \ref{work}$(i)$, $\Gamma(\mu, \varrho)<\infty$ for each $\varrho\in R$.   Moreover, by Lemma \ref{work}, items $(a)$, $(b)$, and $(c)$ of Corollary \ref{home} are satisfied.  

Fix $D_1\geqslant 4^1$ and apply Corollary \ref{home} with $D=D_1$ and $S=\varnothing$ to find $\varrho_1\in B_R$, $\xi_1<\mu$, $N_1\in[\nn]$, and $C_1<\infty$ satisfying $(i)$-$(iii)$ of Corollary \ref{home}.

Assume constants $D_1, \ldots, D_{k-1}$, $C_1, \ldots, C_{k-1}$, sequences $\varrho_1, \ldots, \varrho_{k-1}\in B_R$, sets $N_1, \ldots, N_{k-1}\in[\nn]$, and ordinals $\xi_1, \ldots, \xi_{k-1}<\mu$ have been chosen.  Fix $D_k\geqslant 4^k$ such that \begin{equation}\sum_{l=1}^{k-1} l+rC_l<D_k^{1/2}/3\tag{1}\end{equation} and \begin{equation}\underset{1\leqslant l<l}{\max} \frac{r^2\Gamma(\xi_l)+l}{D_k^{1/2}D_l^{1/2}}< \frac{1}{3\cdot 2^k}.\tag{2}\end{equation}    Now apply Corollary \ref{home} with $D=D_k$ and $S=\{\xi_1, \ldots, \xi_{k-1}\}$ to obtain a sequence $\varrho_k\in B_R$, a constant $C_k$, an ordinal $\xi_k<\mu$, and a set $N_k\in[\nn]$ satisfying the conclusion of Corollary \ref{home}.   This completes the recursive construction.

For each $l\in\nn$, let $\varrho_l=(x^l_n)_{n=1}^\infty$. Let $$\varrho=\sum_{l=1}^\infty \frac{1}{D_l^{1/2}}\varrho_l\in B_R$$ and for each $n\in\nn$, $$x_n=\sum_{l=1}^\infty \frac{1}{D_l^{1/2}} x^l_n.$$ Note that $\varrho=(x_n)_{n=1}^\infty$.   Here we are using that $(R, \|\cdot\|_R)$ is a Banach space and $$\sum_{l=1}^\infty \frac{1}{D_l^{1/2}}\leqslant \sum_{l=1}^\infty \frac{1}{2^l} =1.$$  

By our hypotheses, $\Gamma(\mu, \varrho)<\infty$.     This means there exist $0<C<\infty$ and $M,L_0\in[\nn]$ such that for any $F\in\mathcal{F}_\mu$ and scalars $(a_n)_{n\in F}$, $\|\sum_{n\in F}a_n x_{M(n)}\|\leqslant (C/r)\|\sum_{n\in F} a_n g_{L_0(n)}\|$.  For each $n\in\nn$, let $$L(n)=\max\{L_0(n), N_1(M(n)), \ldots, N_n(M(n))\}$$   and note that for any $F\in\mathcal{F}_\mu$ and scalars $(a_n)_{n\in F}$, $\|\sum_{n\in F}a_n x_{M(n)}\|\leqslant C\|\sum_{n\in F} a_n g_{L(n)}\|$.  Now fix $k\in\nn$ so large that $D_k^{1/2}/3>C$.     By our recursive choices, namely the fact that our choices satisfy Corollary \ref{home}$(iii)$, there exist $F\in \mathcal{F}_{\xi_k}\cap \mathcal{F}_\mu$ and scalars $(a_n)_{n\in F}$ such that $\|\sum_{n\in F}a_n g_{L(n)}\|=1$ and $\|\sum_{n\in F}a_n x^k_{M(n)}\|>D_k$.

For $l\in\nn$,  if $|F|<l$, let $E_l=F$ and $F_l=\varnothing$. Otherwise let $E_l$ denote the set consisting of the $l-1$ smallest members of $F$ and let $F_l=F\setminus E_l$.   It follows from this definition that for each $l\in\nn$, $l\leqslant F_l$.   Note that $\max_{n\in F}|a_n|\leqslant 1$ and $\|\sum_{n\in F_l} a_ng_{L(n)}\|\leqslant 1$ since $(g_n)_{n=1}^\infty$ is normalized and bimonotone.  For any $l\in\nn$, since $\varrho_l\in B_R$ and $\max_{n\in F}|a_n|\leqslant 1$, $$\|\sum_{n\in E_l}a_n x^l_{M(n)}\|\leqslant |E_l|\max_{n\in E_l} |a_n| \leqslant l-1.$$

Fix $l<k$.  Note that  $F_l\in \mathcal{F}_\mu$, from which it follows that $M(F_l)\in \mathcal{F}_\mu$. Furthermore, for any $n\in F_l$, since $l\leqslant n$, $$L(n)=\max\{L_0(n), N_1(M(n)), \ldots, N_n(M(n))\}\geqslant N_l(M(n)).$$   Then by the properties of $\varrho_l$, $$(x^l_{M(n)})_{n\in F_l} = (x^l_n)_{n\in M(F_l)} \leqslant_{C_l} (g_{N_l(n)})_{n\in M(F_l)}= (g_{N_l(M(n))})_{n\in F_l}\leqslant_r (g_{L(n)})_{n\in F_l}.$$   Therefore $$\|\sum_{n\in F}a_n x^l_{M(n)}\| \leqslant l-1+\|\sum_{n\in F_l}a_n x^l_{M(n)}\|\leqslant l+rC_l \|\sum_{n\in F_l} a_ng_{L(n)}\|\leqslant l+rC_l.$$   

Now fix $l>k$. Note that  $F_l\in \mathcal{F}_{\xi_k}$, from which it follows that $M(F_l)\in \mathcal{F}_{\xi_k}$. Furthermore, for any $n\in F_l$, since $l\leqslant n$, $$L(n)=\max\{L_0(n), N_1(M(n)), \ldots, N_n(M(n))\}\geqslant N_l(M(n)).$$   Then by the properties of $\varrho_l$, $$(x^l_{M(n)})_{n\in F_l} = (x^l_n)_{n\in M(F_l)}\leqslant_{r\Gamma(\xi_k)+1/r} (g_{N_l(n)})_{n\in M(F_l)}=(g_{N_l(M(n))})_{n\in F_l} \leqslant_r (g_{L(n)})_{n\in F_l}.$$   Therefore using $(2)$, $$\|\sum_{n\in F}a_n x^l_{M(n)}\| \leqslant l-1+\|\sum_{n\in F_l}a_Nx^l_{M(n)}\|\leqslant l-1+r(r\Gamma(\xi_k)+1/r) \|\sum_{n\in F_l} a_ng_{L(n)}\|\leqslant r^2\Gamma(\xi_k)+l.$$

Then, using $(1)$ and $(2)$,  \begin{align*} C & =C\|\sum_{n\in F}a_ng_{L(n)}\| \geqslant \|\sum_{n\in F}a_nx_{M(n)}\|\\ &  \geqslant \frac{1}{D_k^{1/2}}\|\sum_{n\in F}a_nx^k_{M(n)}\| - \sum_{l=1}^{k-1} \frac{1}{D_l^{1/2}}\|\sum_{n\in F}a_n x^l_{M(n)}\|   - \sum_{l=k+1}^\infty \frac{1}{D_l^{1/2}} \|\sum_{n\in F}a_n x^l_{M(n)}\|  \\ & \geqslant D_k^{1/2}-\sum_{l=1}^{k-1} l+rC_l - \sum_{l=k+1}^\infty \frac{r^2\Gamma(\xi_k)+l}{D_l^{1/2}} \\ & \geqslant D_k^{1/2} - \sum_{l=1}^{k-1} l+rC_l - \sum_{l=k+1}^\infty \frac{D_k^{1/2}}{3\cdot 2^l} \\ & \geqslant D_k^{1/2}-D_k^{1/2}/3-D_k^{1/2}/3 = D_k^{1/2}/3>C.   \end{align*} This contradiction finishes the proof.

\end{proof}

\begin{proof}[Proof of Theorem \ref{main}]

Since $\mathcal{F}_{\omega_1}=[\nn]^{<\omega}$, the hypothesis that every member of $R$ admits a subsequence dominated by a subsequence of $(g_n)_{n=1}^\infty$ is equivalent to the hypothesis that $\Gamma(\omega_1, \varrho)<\infty$ for all $\varrho\in R$. By Theorem \ref{good}, $\Gamma(\omega_1)<\infty$. This yields that for any $\Gamma(\omega_1)<C$, any member of $B_R$ admits a subsequence $C$-dominated by a subsequence of $(g_n)_{n=1}^\infty$.

\end{proof}

\section{Spreading models}

Let $(e_n)_{n=1}^\infty$ be a sequence in some seminormed vector space. Let us say that  that the sequence $(x_n)_{n=1}^\infty$ in some Banach space $X$ generates the $*$-\emph{spreading model} $(e_n)_{n=1}^\infty$ provided that for any $m\in\nn$ and scalars $(a_n)_{n=1}^m$, $$\underset{l_1\to \infty}{\lim} \ldots \underset{l_m\to \infty}{\lim} \|\sum_{n=1}^m a_n x_{l_n}\|= \|\sum_{n=1}^m a_n e_n\|.$$  We say a $*$-spreading model is a \emph{spreading model} if the sequence $(e_n)_{n=1}^\infty$ is normalized, basic, and contained in a normed, rather than just semi-normed, space.

 Standard results from Ramsey theory yield that any $(x_n)_{n=1}^\infty\in \ell_\infty(X)$ has a subsequence which generates some $*$-spreading model. Moreover, any normalized, bimonotone basic sequence has a subsequence which generates a spreading model, which is also evidently a normalized, bimonotone basic sequence.  It is also well-known that if $(e_n)_{n=1}^\infty$ is a $*$-spreading model generated by a  weakly null sequence $(x_n)_{n=1}^\infty$, and if $\lim_n \|x_n\|=1$, then $(e_n)_{n=1}^\infty$ is a spreading model. 

\begin{rem}\upshape In all of our examples, the space $R$ will be a subspace of $c_0^w(X)$, the space of weakly null sequences in $X$, endowed with the norm $\|\cdot\|_R=\|\cdot\|_{\ell_\infty(X)}$.    In this case, we  define $R_0=\{(x_n)_{n=1}^\infty\in B_R: (\forall n\in\nn)(\|x_n\|=1)\}$ and note that any $*$-spreading model generated by a member of $R_0$ is a spreading model. 

In the trivial case in which $R_0=\varnothing$, it must be the case that $R\subset c_0(X)$. In this case, every $*$-spreading model $(e_n)_{n=1}^\infty$ generated by a member of $R$ satisfies $\|\sum_{n=1}^m a_ne_n\|=0$ for all $m\in\nn$ and scalars $(a_n)_{n=1}^m$. Also, $\Gamma(\xi, \varrho)=0$ for any $\xi\leqslant \omega_1$ and $\varrho\in R$.   Thus all of the conditions in Theorem \ref{main2} are trivially satisfied in this case. 

In the case $R_0\neq \varnothing$, by homogeneity and standard perturbation arguments, the conditions that $\Gamma(\omega, \varrho)\leqslant C$ for every $\varrho\in B_R$ and $\Gamma(\omega, \varrho)\leqslant C$ for every $\varrho\in R_0$ are equivalent.  Similarly, for a fixed $C>0$, the hypothesis that for every $\ee>0$, every $*$-spreading model generated by a member of $B_R$ is $C+\ee$-dominated by a spreading model generated by a subsequence of $(g_n)_{n=1}^\infty$ is equivalent to the hypothesis that for every $\ee>0$, every spreading model generated by a member of $R_0$ is $C+\ee$-dominated by a spreading model generated by a subsequence of $(g_n)_{n=1}^\infty$. Therefore in all examples of interest, our hypotheses stated in terms of $B_R$ and $*$-spreading models are equivalent to hypothesis on members $(x_n)_{n=1}^\infty$ of $B_R$ with $\|x_n\|=1$ for all $n\in\nn$ and spreading models.

\end{rem}

If $(g_n)_{n=1}^\infty$ is a normalized, bimonotone, $r$-right dominant basis, then all spreading models generated by $(g_n)_{n=1}^\infty$ are uniformly equivalent. Indeed, if $(e_n)_{n=1}^\infty$ and $(f_n)_{n=1}^\infty$ are spreading models generated by $(g_{l_n})_{n=1}^\infty$ and $(g_{k_n})_{n=1}^\infty$, respectively, we can fix scalars $(a_n)_{n=1}^m$ and $p_0<\ldots <p_m$ and $q_1<\ldots <q_m$ such that $l_{p_{n-1}}<k_{q_n}<l_{p_n}$ for each $1\leqslant n\leqslant m$, and such that $$\|\sum_{n=1}^m a_n g_{l_{p_{n-1}}}\|, \|\sum_{n=1}^m a_n g_{l_{p_n}}\|\approx \|\sum_{n=1}^m a_ne_n\|$$ and $$\|\sum_{n=1}^m a_n g_{k_{q_n}}\|\approx \|\sum_{m=1}^n a_nf_n\|.$$   Then $$\frac{1}{r}\|\sum_{n=1}^m a_ng_{l_{p_{n-1}}} \|\leqslant \|\sum_{n=1}^m a_ng_{k_{q_n}} \|\leqslant r\|\sum_{n=1}^m a_n g_{l_{p_n}}\|.$$   From this it is easy to see that $(e_n)_{n=1}^\infty$ and $(f_n)_{n=1}^\infty$ are $r^2$-equivalent.

In \cite{KO}, the following theorem was shown. 

\begin{theorem} If every spreading model generated by a normalized, weakly null sequence in $X$ is dominated by the canonical $\ell_p$ basis, then there exists a constant $C$ such that every spreading model generated by a normalized, weakly null sequence in $X$ is $C$-dominated by the canonical $\ell_p$ basis. The same holds if we replace $\ell_p$ by $c_0$.

\label{KO2}
\end{theorem}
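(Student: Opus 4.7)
The plan is to derive Theorem \ref{KO2} as a direct specialization of Theorem \ref{ffold}, which has already been proved as a consequence of the transfinite Theorem \ref{good}. First, I would take $(g_n)_{n=1}^\infty$ to be the canonical basis of $\ell_p$ (for the first statement) or of $c_0$ (for the second). Both of these bases are subsymmetric and normalized, hence seminormalized and $1$-right dominant; in fact every subsequence $(g_{l_n})_{n=1}^\infty$ is $1$-equivalent to $(g_n)_{n=1}^\infty$ itself. I would then let $R=c_0^w(X)$, the space of weakly null sequences in $X$, endowed with $\|\cdot\|_R=\|\cdot\|_{\ell_\infty(X)}$. It is immediate from the definition that $(R,\|\cdot\|_R)$ is a subsequential space on $X$, so the setting of Theorem \ref{ffold} is in force.

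Next I would verify the hypothesis of Theorem \ref{ffold}. Because $(g_n)_{n=1}^\infty$ is subsymmetric, any subsequence of $(g_n)_{n=1}^\infty$ generates a unique spreading model which is $1$-equivalent to $(g_n)_{n=1}^\infty$ itself. Consequently, the hypothesis ``every spreading model generated by a normalized, weakly null sequence in $X$ is dominated by the canonical basis of $\ell_p$ (resp. $c_0$)'' is, modulo the passage between $B_R$ and $R_0$ recorded in the remark of Section $4$, identical to the hypothesis of Theorem \ref{ffold}: every $*$-spreading model generated by a member of $R$ is dominated by a spreading model generated by a subsequence of $(g_n)_{n=1}^\infty$. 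This reduction uses only homogeneity and the standard fact that $*$-spreading models generated by sequences in $R_0$ are genuine spreading models.

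Applying Theorem \ref{ffold} then yields a single constant $C$ such that every spreading model generated by a member of $B_R$ is $C$-dominated by every spreading model generated by a subsequence of $(g_n)_{n=1}^\infty$. Specializing to the trivial subsequence, every such spreading model is $C$-dominated by $(g_n)_{n=1}^\infty$ itself; a final appeal to homogeneity transports the conclusion from $B_R$ back to normalized weakly null sequences, producing the statement of Theorem \ref{KO2}.

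The only place where any care is required, once Theorem \ref{ffold} is available, is the reconciliation between the classical formulation in terms of spreading models generated by normalized weakly null sequences and the formulation in terms of $*$-spreading models generated by members of $B_R$. That bookkeeping is precisely what the remark at the opening of Section $4$ is designed to handle, so in the present argument there is no substantive obstacle beyond invoking that remark. No Ramsey-theoretic or ordinal-indexed argument needs to be repeated: all the combinatorial work has already been compressed into Theorem \ref{good} and its specialization Theorem \ref{ffold}.
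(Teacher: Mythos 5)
Your derivation is correct and is precisely the route the paper intends: Theorem \ref{KO2} is the specialization of Theorem \ref{ffold} (itself obtained from Theorem \ref{main2} and the transfinite Theorem \ref{good}) to $(g_n)_{n=1}^\infty$ the canonical $\ell_p$ or $c_0$ basis and $R=c_0^w(X)$, using that these bases are subsymmetric so every spreading model generated by a subsequence is $1$-equivalent to the basis itself, and using the remark at the start of Section $4$ to pass between normalized weakly null sequences and members of $B_R$. Nothing essential is missing.
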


This theorem neither implies nor is implied by Theorem \ref{KO}.  A proof of Theorem \ref{KO2} was given in \cite{KO} to prepare the reader for the more technical proof of Theorem \ref{KO}.   We note that our proof of Theorem \ref{main} is essentially a transfinite version of the proof of Theorem \ref{KO2}, and therefore offers a unified approach to Theorems \ref{KO} and \ref{KO2}.  We make precise the connection between our proof of Theorem \ref{main} and our generalization of Theorem \ref{KO2}. In what follows, $\Gamma$ is still defined as in Section $3$.

\begin{theorem} Let $X$, $R$ be as in the introduction. Let $(g_n)_{n=1}^\infty$ be a normalized, bimonotone, right dominant basic sequence and let $(e_n)_{n=1}^\infty$ be a spreading model generated by a subsequence of $(g_n)_{n=1}^\infty$. The following are equivalent.  \begin{enumerate}[(i)]\item Every $*$-spreading model generated by a member of $R$ is dominated by $(e_n)_{n=1}^\infty$.  \item There exists a constant $C$ such that every $*$-spreading model generated by a member of $B_R$ is $C$-dominated by $(e_n)_{n=1}^\infty$. \item $\Gamma(\omega)<\infty$. \end{enumerate}

\label{main2}
\end{theorem}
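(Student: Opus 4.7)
The plan is to prove the equivalence by the cycle (ii) $\Rightarrow$ (i) $\Rightarrow$ (iii) $\Rightarrow$ (ii). The implication (ii) $\Rightarrow$ (i) is immediate. Both remaining implications translate between the combinatorial condition $\Gamma(\omega)<\infty$ and spreading-model domination via iterated limits, and both invoke the uniform $r^2$-equivalence of all spreading models of subsequences of $(g_n)_{n=1}^\infty$ established in the paragraph immediately preceding the theorem.

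For (iii) $\Rightarrow$ (ii), I would fix $C_0 > \Gamma(\omega)$ and $\varrho=(x_n) \in B_R$ generating a $*$-spreading model $(f_n)$. By the definition of $\Gamma(\omega)$ together with Remark \ref{Am}, pick $M,L \in [\nn]$ with $\{(M(F), L(F)) : F \in \mathcal{F}_\omega\} \subset T(\varrho, C_0)$. Since $\mathcal{F}_\omega$ is spreading, replacing $M,L$ by $M \circ S, L \circ S$ for a common strictly increasing $S$ preserves this inclusion, so I may arrange in addition that $(g_{L(n)})$ generates a spreading model $(e'_n)$, necessarily $r^2$-equivalent to $(e_n)$. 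The subsequence $(x_{M(n)})$ of $\varrho$ still generates $(f_n)$. Then for each $m$ and scalars $(a_j)_{j=1}^m$, applying the $\mathcal{F}_\omega$-inequality to $F = \{l_1<\cdots<l_m\}$ with $l_1 \geqslant m$ and iterating $l_1, \ldots, l_m \to \infty$ yields $\|\sum a_j f_j\| \leqslant C_0 \|\sum a_j e'_j\| \leqslant C_0 r^2 \|\sum a_j e_j\|$, establishing (ii) with $C = C_0 r^2$.

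For (i) $\Rightarrow$ (iii), by Theorem \ref{good} it suffices to show $\Gamma(\omega, \varrho) < \infty$ for each $\varrho \in R$, and homogeneity reduces this to $\varrho=(x_n) \in B_R$. Extract a subsequence of $\varrho$ generating a $*$-spreading model $(f_n)$; hypothesis (i) gives a constant $K$ with $(f_n)$ $K$-dominated by $(e_n)$. A standard diagonal extraction, relying on the iterated-limit definition of $*$-spreading model and compactness of the $\ell_\infty^m$-unit ball to upgrade pointwise convergence to convergence uniform over bounded scalars of each fixed length, produces $M \in [\nn]$ (thinning the chosen subsequence) and $L \in [\nn]$ (with $(g_{L(n)})$ generating $(e_n)$) such that for every $m$, every $F \in \mathcal{F}_\omega$ with $|F|=m$, and every scalar sequence $(a_j)_{j=1}^m$ with $\|(a_j)\|_\infty \leqslant 1$,
\begin{align*}
\Bigl\|\sum_{j=1}^m a_j x_{M(F(j))}\Bigr\| &\leqslant \Bigl\|\sum_{j=1}^m a_j f_j\Bigr\| + 2^{-m},\\
\Bigl\|\sum_{j=1}^m a_j e_j\Bigr\| &\leqslant \Bigl\|\sum_{j=1}^m a_j g_{L(F(j))}\Bigr\| + 2^{-m}.
\end{align*}
Chaining these with the $K$-domination of $(f_n)$ by $(e_n)$, using the bimonotonicity bound $\|\sum a_j g_{L(F(j))}\| \geqslant \|(a_j)\|_\infty$ to absorb the additive errors $(K+1)2^{-m}$ into a bounded multiplicative correction, and then rescaling for general scalars, yields $(x_{M(n)})_{n \in F} \leqslant_{2K+1} (g_{L(n)})_{n \in F}$ for every $F \in \mathcal{F}_\omega$. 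Hence $\Gamma(\omega, \varrho) \leqslant 2K+1 < \infty$.

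The principal technical obstacle is the simultaneous diagonal extraction in (i) $\Rightarrow$ (iii): one must harmonize the approximations of both spreading models so that a single pair $(M,L)$ works uniformly over all $F \in \mathcal{F}_\omega$, whose size is unbounded. Converting the per-length additive errors $2^{-m}$ into a single multiplicative constant that does not blow up with $|F|$ depends essentially on the lower bound $\|\sum a_j g_{L(F(j))}\| \geqslant \|(a_j)\|_\infty$ furnished by the bimonotonicity of $(g_n)$; without such a lower bound the error-absorption step fails.
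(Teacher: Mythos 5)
Your proposal is correct and follows essentially the same route as the paper: the cycle (ii)$\Rightarrow$(i)$\Rightarrow$(iii)$\Rightarrow$(ii), with (i)$\Rightarrow$(iii) reduced via Theorem \ref{good} to showing $\Gamma(\omega,\varrho)<\infty$ by a diagonal approximation of both $*$-spreading models and absorption of the additive errors through the bimonotonicity lower bound $\|\sum a_j g_{L(F(j))}\|\geqslant\|(a_j)\|_\infty$, and (iii)$\Rightarrow$(ii) by passing to a subsequence of $(g_{L(n)})_{n=1}^\infty$ generating a spreading model and taking iterated limits, then invoking the uniform equivalence of spreading models of subsequences of $(g_n)_{n=1}^\infty$. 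The only differences are bookkeeping (per-length errors $2^{-m}$ versus the paper's additive error $1$ and multiplicative $2$, yielding $2K+1$ versus $1+2C$), which do not affect the argument.
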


We now give the proof of Theorem \ref{main2}. 

\begin{proof} We will show that $(ii)\Rightarrow (i)\Rightarrow (iii)\Rightarrow (ii)$

$(ii)\Rightarrow (i)$ This is clear.

$(i)\Rightarrow (iii)$ Assume every spreading model generated by a member of $R$ is dominated by $(e_n)_{n=1}^\infty$.  Recall that for some $q_n\uparrow \omega$, $$\mathcal{F}_\omega=\{F: \exists n\leqslant F\in \mathcal{F}_{q_n}\}.$$ Since $\mathcal{F}_{q_n}$ consists of sets with cardinality not more than $q_n$, it follows that $$\mathcal{F}_\omega=\{\varnothing\}\cup \{F: \varnothing\neq F, |F|\leqslant q_{\min F}\}.$$   

 In order to show $\Gamma(\omega)<\infty$, by Theorem \ref{good}, it suffices to show that for each $\varrho\in R$, $\Gamma(\omega, \varrho)<\infty$.   To that end, fix $\varrho=(x_n)_{n=1}^\infty\in R\subset \ell_\infty(X)$.   As noted above, a standard application of the finite Ramsey Theorem yields the existence of some $M\in[\nn]$ such that for each $m\in\nn$ and each scalar sequence $(a_n)_{n=1}^m$, the iterated limit $$\underset{l_1\to \infty}{\lim} \ldots \underset{l_m\to \infty}{\lim} \|\sum_{n=1}^m a_n x_{M(l_n)}\|$$ exists. We define a seminorm on $c_{00}$, whose natural basis we denote by $(f_n)_{n=1}^\infty$, by $$\|\sum_{n=1}^m a_nf_n\|= \underset{l_1\to \infty}{\lim} \ldots \underset{l_m\to \infty}{\lim} \|\sum_{n=1}^m a_n x_{M(l_n)}\|.$$  Therefore $(x_{M(n)})_{n=1}^\infty$ generates the $*$-spreading model $(f_n)_{n=1}^\infty$ with this seminorm.   By replacing $M$ with a subset thereof and using a finite net argument and a diagonalization argument, we may assume that for each $m\in\nn$, each $m\leqslant l_1<\ldots <l_{q_m}$, and each scalar sequence $(a_n)_{n=1}^{q_m} \in B_{\ell_\infty^{q_m}}$, $$\Bigl|\|\sum_{n=1}^{q_m} a_nf_n\|-\|\sum_{n=1}^{q_m} a_nx_{M(l_n)}\|\Bigr|<1.$$   Similarly, since  $(e_n)_{n=1}^\infty$ is generated by some subsequence of $(g_n)_{n=1}^\infty$, we may choose positive numbers $(\ee_n)_{n=1}^\infty$ and $L\in[\nn]$ such that for each $m\in [\nn]$, each $m\leqslant l_1<\ldots <l_{q_m}$, and each $(a_n)_{n=1}^{q_m} \in B_{\ell_\infty^{q_m}}$, $$\Bigl|\|\sum_{n=1}^{q_m} a_ne_n\|-\|\sum_{n=1}^{q_m} a_ng_{L(l_n)}\|\Bigr|<\ee_m.$$  Since the sequences $(e_n)_{n=1}^\infty$ and $(g_n)_{n=1}^\infty$ are normalized and  bimonotone, it follows that if $\ee_m$ was chosen small enough, then for each $m\in\nn$, $m\leqslant l_1<\ldots <l_{q_m}$, and each scalar sequence $(a_n)_{n=1}^{q_m}$, $$\|\sum_{n=1}^{q_m} a_ne_n\|\leqslant 2\|\sum_{n=1}^{q_m} a_ng_{L(l_n)}\|.$$  

Since $(i)$ is assumed to hold, there exists $C$ such that $(f_n)_{n=1}^\infty \leqslant_C (e_n)_{n=1}^\infty$.  We claim that $$\{(M(F), L(F)):F\in\mathcal{F}_\omega\}\subset T(\varrho, 1+2C),$$ which will show that $\Gamma(\omega, \varrho)<\infty$ and finish $(i)\Rightarrow (iii)$. To that end, fix $\varnothing \neq F\in \mathcal{F}_\omega$ and scalars $(a_n)_{n\in F}$. By extending $F$ to $F\cup G$ for some $F<G$ and letting $a_n=0$ for all $n\in G$ and relabeling if necessary, we can assume that $\min F=m$ and $|F|=q_m$.    Write $F=(l_1, \ldots, l_{q_m})$ and let $b_n=a_{l_n}$ for each $1\leqslant n\leqslant q_m$.     The case $a_n=0$ for all $n\in F$ is trivial, so assume $\max_{n\in F}|a_n|>0$. By scaling and using homogeneity, we can assume $\max_{n\in F}|a_n|=1$.   Since $(g_n)_{n=1}^\infty$ is normalized and bimonotone, $$1\leqslant \|\sum_{n\in F}a_n g_{L(n)}\|.$$   Moreover, by the preceding paragraph, $$\|\sum_{n\in F}a_n x_{M(n)}\|\leqslant 1+\|\sum_{n=1}^{q_m} b_nf_n\|\leqslant 1+C\|\sum_{n=1}^{q_m} b_ne_n\|\leqslant 1+2C\|\sum_{n\in F} a_n g_{L(n)}\|\leqslant (1+2C)\|\sum_{n\in F}a_n g_{L(n)}\|.$$  This gives the inclusion $$\{(M(F), L(F)):F\in\mathcal{F}_\omega\}\subset T(\varrho, 1+2C)$$ and finishes the proof.

$(iii)\Rightarrow (ii)$  Assume $\Gamma(\omega)<\infty$ and fix $\Gamma(\omega)<C<\infty$.   Recall that $\mathcal{F}_\omega$ is defined by first fixing some $q_n\uparrow \omega$ and then letting $$\mathcal{F}_\omega=\{F: \exists n\leqslant F\in \mathcal{F}_{q_n}\}.$$  This is equivalent to saying that a non-empty set $F$ lies in $\mathcal{F}_\omega$ if and only if $|F|\leqslant q_{\min F}$.    Now assume that  $\varrho=(x_n)_{n=1}^\infty\in B_R$  generates the $*$-spreading model $(f_n)_{n=1}^\infty$.   Fix $K,N\in[\nn]$ such that $$\{(K(F), N(F)): F\in\mathcal{F}_\omega\}\subset T(\varrho, C).$$   We may fix $s_1<s_2<\ldots$ such that $(g_{N(s_n)})_{n=1}^\infty$ generates some spreading model, $(e_n')_{n=1}^\infty$. Let $M(n)=K(s_n)$ and $L(n)=N(s_n)$. For any $F\in\mathcal{F}_\omega$, $G:=(s_n: n\in F)$ is a spread of $F$, and therefore also a member of $\mathcal{F}_\omega$. From this it follows that $(M(F), L(F))=(K(G), N(G))\in T(\varrho, C)$.   Fix scalars $(a_n)_{n=1}^m$. Note that for any $m\leqslant l_1<\ldots <l_m$, $(l_1, \ldots, l_m)\in \mathcal{F}_\omega$, since $$|(l_1, \ldots, l_m)|=m \leqslant q_m\leqslant q_{l_1}.$$  Note that $(x_{M(n)})_{n=1}^\infty$ also generates the $*$-spreading model $(f_n)_{n=1}^\infty$.  Therefore \begin{align*} \|\sum_{n=1}^m a_n f_n\| & = \underset{l_1\to \infty}{\lim} \ldots \underset{l_m\to \infty}{\lim} \|\sum_{n=1}^m a_n x_{M(l_n)}\| \\ & \leqslant C\text{\ } \underset{l_1\to \infty}{\lim} \ldots \underset{l_m\to \infty}{\lim} \|\sum_{n=1}^m a_n g_{K(l_n)}\| = C\|\sum_{n=1}^m a_ne_n'\|. \end{align*} 

Since $(e_n)_{n=1}^\infty$ and $(e_n')_{n=1}^\infty$ are $r$-equivalent, $(f_n)_{n=1}^\infty \leqslant_{rC} (e_n)_{n=1}^\infty$.

\end{proof}

Given $X$, $R$, and $(g_n)_{n=1}^\infty$ satisfying our usual conditions, Theorem \ref{main} is equivalent to the statement that if for each $\varrho\in R$, $\Gamma(\omega_1, \varrho)<\omega_1$, $\Gamma(\omega_1)<\infty$. Theorem \ref{ffold} deals with the condition $\Gamma(\omega)<\infty$.   Combined, these two theorems give four conditions: \begin{enumerate}\item For every $\varrho\in R$, $\Gamma(\omega, \varrho)<\infty$. \item For every $\varrho\in R$, $\Gamma(\omega_1, \varrho)<\infty$. \item $\Gamma(\omega)<\infty$. \item $\Gamma(\omega_1)<\infty$. \end{enumerate} It is the content of Theorems \ref{main} and \ref{ffold} that $(1)\Leftrightarrow (3)$ and $(2)\Leftrightarrow (4)$.  As noted in \cite{KO}, neither of Theorems \ref{KO}, \ref{KO2} implies the other. These do not imply each other, because one can choose $(g_n)_{n=1}^\infty$ in such a way that $\Gamma(\omega)<\infty$ and $\Gamma(\omega_1)=\infty$. 

It follows from Corollary  \ref{break} that if $\omega^\gamma\leqslant \xi<\omega^{\gamma+1}$, $\Gamma(\omega^\gamma)<\infty$ if and only if $\Gamma(\xi)<\infty$, so that $\Gamma(\omega^\gamma)<\infty$ and $\Gamma(\xi)<\infty$ are equivalent properties. The previous paragraph states that the property $\Gamma(\omega^1)<\infty$ is strictly weaker than the property $\Gamma(\omega^{\omega_1})<\infty$.   We may ask about intermediate values. For example, are the properties $\Gamma(\omega^1)<\infty$ and $\Gamma(\omega^2)<\infty$ equivalent? We know that the second implies the first by Lemma \ref{work}$(i)$. The next result shows that the answer to this question is no. That is, for each $1\leqslant \gamma_1, \gamma_2\leqslant \omega_1$ with $\gamma_1\neq \gamma_2$, the fact that $$\Gamma(\omega^{\gamma_1}, \varrho)<\infty\text{\ for all\ }\varrho\in R \Leftrightarrow \Gamma(\omega^{\gamma_1})<\infty$$ neither implies nor is implied by $$\Gamma(\omega^{\gamma_2}, \varrho)<\infty\text{\ for all\ }\varrho\in R \Leftrightarrow \Gamma(\omega^{\gamma_2})<\infty.$$

\begin{proposition} For any $1\leqslant \gamma<\omega_1$, there exist $X$, $R$, and $(g_n)_{n=1}^\infty$ such that $$\omega^\gamma=\min \{\xi: \Gamma(\xi)=\infty\}.$$

\label{sc}
\end{proposition}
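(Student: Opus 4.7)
The plan is to explicitly construct, for each $1\leqslant\gamma<\omega_1$, a triple $(X_\gamma,R_\gamma,(g_n))$ based on higher-order Schreier spaces. In all cases set $R_\gamma=c_0^w(X_\gamma)$ equipped with $\|\cdot\|_R=\|\cdot\|_{\ell_\infty(X_\gamma)}$; a routine check confirms this is a subsequential space on $X_\gamma$. For a successor ordinal $\gamma=\delta+1$, take $X_\gamma=S_\gamma$, the $\gamma$-th Schreier space with $1$-unconditional, normalized, weakly null canonical basis $(e_n)$ and norm $\|\sum a_ne_n\|_{S_\gamma}=\sup\{\sum_{n\in F}|a_n|:F\in\mathcal{S}_\gamma\}$, and let $(g_n)$ be the canonical basis of $S_\delta$, which is normalized, bimonotone, and $1$-right dominant by a direct spread argument using that $\mathcal{S}_\delta$ is spreading.

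To verify $\Gamma(\omega^\gamma)=\infty$, I would use $\varrho=(e_n)\in B_{R_\gamma}$ as witness. Applying Theorem \ref{gasp} to the equal-rank families $\mathcal{F}_{\omega^\delta}$ and $\mathcal{S}_\delta$ yields $N\in[\nn]$ on which they coincide. Given arbitrary $M,L\in[\nn]$, using that $\mathcal{F}_{\omega^\gamma}=\mathcal{F}_{\omega^\delta\cdot\omega}$ contains, for each $t$, elements of the form $F=F_1<\ldots<F_t$ with $t=\min F$ and each $F_k\in\mathcal{F}_{\omega^\delta}\upp N$, one obtains $M(F_k)\in\mathcal{S}_\delta$, hence $M(F)\in\mathcal{S}_\gamma$. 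For $a_n=1$, the left norm equals $|F|$ while the right norm is at most the maximal $\mathcal{S}_\delta$-admissible subset of $L(F)$, which is $\leqslant|F|/t$ when the $F_k$ are chosen of equal cardinality. The ratio $\geqslant t=\min F$ is unbounded, so $\Gamma(\omega^\gamma,(e_n))=\infty$ and thus $\Gamma(\omega^\gamma)=\infty$.

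Conversely, for $\xi<\omega^\gamma$, I would show $\Gamma(\xi)<\infty$ via Theorem \ref{good}: it suffices to prove $\Gamma(\xi,\varrho)<\infty$ for each $\varrho\in R_\gamma$. A standard gliding-hump argument produces a subsequence of $\varrho$ that is a small perturbation of a normalized block basis of $(e_n)$, which in the Schreier space is uniformly equivalent to a subsequence $(e_{M(n)})$. Writing $\xi<\omega^\delta\cdot n$ for some finite $n$ and invoking Theorem \ref{gasp} to select $N\in[\nn]$ with $\mathcal{F}_{\omega^\delta}\upp N\subset\mathcal{S}_\delta$, every $F\in\mathcal{F}_\xi$ with $F\subset N$ decomposes into at most $n$ many $\mathcal{S}_\delta$-admissible pieces, giving the bound $\|\sum_{n\in F}a_ne_{M(n)}\|_{S_\gamma}\leqslant n\|\sum_{n\in F}a_ng_{L(n)}\|_{S_\delta}$ for an appropriate choice of $L$.

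For a limit ordinal $\gamma$, fix $\gamma_k\uparrow\gamma$ and take $X_\gamma=(\bigoplus_k X_{\gamma_k+1})_{c_0}$ with $(g_n)$ an interleaved right-dominant ordering assembled from the individual $(g_n^{\gamma_k+1})$. Each summand contributes $\Gamma(\omega^{\gamma_k+1})=\infty$ via its own basis, so $\Gamma(\omega^\gamma)=\sup_k\Gamma(\omega^{\gamma_k+1})=\infty$; and for $\xi<\omega^\gamma$, selecting $k$ with $\xi<\omega^{\gamma_k+1}$ and applying the successor-case bound summand-by-summand gives $\Gamma(\xi)<\infty$. The main technical obstacle is the gliding-hump reduction to block bases in higher-order Schreier spaces and, in the limit case, constructing the interleaved basis so that it remains right-dominant while the summand-wise estimates combine uniformly.
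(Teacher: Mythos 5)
Your construction differs from the paper's (the paper takes $X=\ell_2$ with $(g_n)_{n=1}^\infty$ the basis of the $2$-convexified Schreier space $X_\nu^{(2)}$, so the finiteness direction is nearly free), but the decisive problem is that your argument for $\Gamma(\omega^\gamma)=\infty$ in the successor case is wrong. You claim that for $a_n\equiv 1$ the maximal $\mathcal{S}_\delta$-admissible subset of $L(F)$ has size at most $|F|/t$. This fails because $L$ is \emph{arbitrary} and $L(F)$ is a spread of $F$: since $\mathcal{S}_\delta$ is spreading, a fast-growing $L$ makes far more subsets of $L(F)$ admissible than subsets of $F$. Already for $\delta=1$: if $F=F_1<\dots<F_t$ with $|F|=tm$ and $L(n)\geqslant tm$ for all $n\in F$, then $L(F)$ itself lies in $\mathcal{S}_1$, the right-hand norm equals $|F|$, and your ratio is $1$, not $t$. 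No elementary counting with constant coefficients can work here; what is actually needed is the nontrivial fact (from Argyros--Mercourakis--Tsarpalias, via the repeated averages hierarchy) that the $\mathcal{S}_\delta$-Schreier basis is $(\delta+1)$-weakly null, i.e.\ along \emph{every} subsequence $(g_{L(n)})$ there are $\mathcal{S}_{\delta+1}$-supported convex combinations of arbitrarily small norm. That is exactly the input the paper invokes, and it is the step your proposal replaces with a false estimate. (A smaller issue in the converse direction: normalized block sequences in $S_\gamma$ are \emph{not} uniformly equivalent to subsequences of the basis --- the paper points out these spaces are not block stable --- but they are $1$-dominated by $(e_{\max\supp x_n})$, which is all you need there.)

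The limit case has a second, independent gap. If the interleaved basis $(g_n)_{n=1}^\infty$ contains the target bases of all summands, then a witness sequence living in the $k$-th summand is $1$-dominated, on suitably positioned $\mathcal{F}_{\omega^{\gamma_k+1}}$-sets, by a subsequence of $(g_n)_{n=1}^\infty$ drawn entirely from a higher summand's target basis (every such set spreads into $\mathcal{S}_{\gamma_j}$ for $j$ large, where the lower estimate is the full $\ell_1$ sum). So the claim $\Gamma(\omega^{\gamma_k+1})=\infty$ ``summand by summand'' is not justified and appears false for this construction; the badness of each row is cancelled by the other rows' targets. You would also need to verify right dominance of the interleaving, which is not automatic. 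The paper avoids both problems by using a \emph{single} weighted mixed-Schreier space $G$ with norm $\sup_k\vartheta_k\sup_{F\in\mathcal{S}_{\nu_k}}\sum_{n\in F}|a_n|$ and quoting the known $\gamma$-weak nullity of its basis; some such nontrivial input is unavoidable, for the same reason as in the successor case.
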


\begin{proof} Fix $0\leqslant \nu<\omega_1$. Let $X=\ell_2$, $R=c_0^w(\ell_2)$, and let $(g_n)_{n=1}^\infty$ denote the canonical basis of the $2$-convexification $X^{(2)}_\nu$ of the Schreier space $X_\nu$. We recall that $$\|\sum_{n=1}^\infty a_ng_n\|_{X_\nu}=\sup_{F\in \mathcal{S}_\nu} \sum_{n\in F}|a_n|$$ and $$\|\sum_{n=1}^\infty a_ng_n\|_{X_\nu^{(2)}}=\sup_{F\in \mathcal{S}_\nu} \Bigl(\sum_{n\in F}|a_n|^2\Bigr)^{1/2}.$$

We note that since $\text{rank}(\mathcal{F}_{\omega^\nu})=\omega^\nu+1=\text{rank}(\mathcal{S}_\nu)$, \cite[Proposition $3.1$]{Con} yields the existence of some $P\in[\nn]$ such that for any $F\in\mathcal{F}_{\omega^\nu}$, $P(F)\in \mathcal{S}_\nu$. From this it follows that for any $F\in \mathcal{F}_{\omega^\nu}$, $$\|\sum_{n\in F}a_n g_{P(n)}\|_{X^{(2)}_\nu}= \bigl(\sum_{n\in F}|a_n|^2\bigr)^{1/2}.$$  For any normalized, weakly null sequence $(x_n)_{n=1}^\infty$ in $\ell_2$ and $\ee>0$, there exists $M\in[\nn]$ such that $(x_{M(n)})_{n=1}^\infty$ is $(1+\ee)$-equivalent to the canonical $\ell_2$ basis.  From this it follows that for any $F\in\mathcal{F}_{\omega^\nu}$ and scalars $(a_n)_{n\in F}$, $$\|\sum_{n\in F} a_nx_{M(n)}\|_{\ell_2} \leqslant (1+\ee)\bigl(\sum_{n\in F}|a_n|^2\bigr)^{1/2} =(1+\ee)\|\sum_{n\in F} a_ng_{P(n)}\|_{X_\nu^{(2)}}.$$  Therefore $\Gamma(\omega^\nu)\leqslant 1$. 

However, it is known (\cite{AMT}) that the basis $(g_n)_{n=1}^\infty$ is $\nu+1$-weakly null as a basis for $X_\nu$. This means that for any $Q\in[\nn]$, $$\inf \Bigl\{\|\sum_{n\in F} a_n g_{Q(n)}\|_{X_\nu}: F\in\mathcal{S}_{\nu+1}, \sum_{n\in F}|a_n|=1\Bigr\}=0.$$   From this it follows $\Gamma(\omega^{\nu+1})=\infty$. Indeed, let $\varrho=(x_n)_{n=1}^\infty$ denote the canonical $\ell_2$ basis and, again using \cite[Proposition $3.1$]{Con}, choose $P\in[\nn]$ such that $\{P(F): F\in \mathcal{S}_{\nu+1}\}\subset \mathcal{F}_{\omega^{\nu+1}}$.  Fix $M,L\in[\nn]$ and let $Q(n)=L(P(n))$ for each $n\in\nn$.   Note that there cannot exist $C$ such that $\{(M(F), L(F)): F\in \mathcal{F}_{\omega^{\nu+1}}\}\subset T(\varrho, C)$, since then \begin{align*} \frac{1}{C} & \leqslant \inf\Bigl\{\|\sum_{n \in F} a_ng_{L(n)}\|_{X^{(2)}_\nu} : F\in \mathcal{F}_{\omega^{\nu+1}}, \|\sum_{n\in F}a_nx_{M(n)}\|=1 \Bigr\} \\ & \leqslant \inf\Bigl\{\|\sum_{n \in P(F)} a_ng_{L(n)}\|_{X^{(2)}_\nu} : F\in \mathcal{S}_{\nu+1}, \sum_{n\in F}|a_n|^2=1 \Bigr\} \\ & = \inf\Bigl\{\|\sum_{n \in F} a_ng_{Q(n)}\|_{X^{(2)}_\nu} : F\in \mathcal{S}_{\nu+1}, \sum_{n\in F}|a_n|^2=1\Bigr\} \\ & = \inf\Bigl\{\|\sum_{n \in F} a_ng_{Q(n)}\|_{X_\nu}^{1/2} : F\in \mathcal{S}_{\nu+1}, \sum_{n\in F}|a_n|=1\Bigr\} \\ & =0.   \end{align*}

This yields that $\Gamma(\omega^{\nu+1})=\infty$. This gives the proposition in the case that $\gamma$ is a successor ordinal.

Now fix $1=\vartheta_1>\vartheta_2>\ldots$, $\lim_n \vartheta_n=0$, a limit ordinal $\gamma<\omega_1$, and $\nu_n\uparrow \gamma$.   Let $(g_n)_{n=1}^\infty$ be the basis for the completion $G$ of $c_{00}$ with respect to the norm $$\|\sum_{n=1}^\infty a_ng_n\|_G=\sup_k \vartheta_k \sup_{F\in \mathcal{S}_{\nu_k}} \sum_{n\in F}|a_n|.$$  Let $G^{(2)}$ denote the $2$-convexification of $G$, the norm of which is given by $$\|\sum_{n=1}^\infty a_ng_n\|_{G^{(2)}}=\sup_k \vartheta_k^{1/2} \sup_{F\in \mathcal{S}_{\nu_k}} \bigl(\sum_{n\in F}|a_n|^2\bigr)^{1/2}.$$  For $\nu<\gamma$, if $\nu<\nu_n$, again by \cite[Proposition $3.1$]{Con}, there exists $P\in[\nn]$ such that for any $F\in\mathcal{F}_{\omega^\nu}$, $P(F)\in \mathcal{S}_{\nu_n}$.  As in the preceding case, we deduce that $\Gamma(\omega^\nu)\leqslant 1/\theta_n^{1/2}$. It is known (see \cite{CN}) that the basis $(g_n)_{n=1}^\infty$ is $\gamma$-weakly null, and we deduce that $\Gamma(\omega^\gamma)=\infty$ as in the successor case.

\end{proof}

\section{Freeman's theorem}

Most of our examples of Theorem \ref{main} will be in the case that $R=c_0^w(X)$, the space of weakly null sequences in $X$,  with $\|\cdot\|_R=\|\cdot\|_{\ell_\infty(X)}$.    By homogeneity, the hypothesis that every member of $B_R$ has a subsequence dominated by a subsequence of $(g_n)_{n=1}^\infty$ is equivalent to the hypothesis that every normalized, weakly null sequence in $X$ has a subsequence dominated by a subsequence of $(g_n)_{n=1}^\infty$.   For the moment, we consider this particular choice $R=c_0^w(X)$.   We note that Theorem \ref{main} neither implies nor is implied by Theorem \ref{Freeman}. However, since the bases of $\ell_p$ and $c_0$ are equivalent to all their subsequences, Theorem \ref{Freeman} and Theorem \ref{main} are both generalizations of Theorem \ref{KO}.  In order to motivate Theorem \ref{main}, we provide several examples of spaces $X$ and non-trivial bases $(g_n)_{n=1}^\infty$ satisfying Theorem \ref{main}, but not satisfying Theorem \ref{Freeman} for any choice of $(v_n)_{n=1}^\infty$ except for the trivial case that $(v_n)_{n=1}^\infty$ is equivalent to the canonical $\ell_1$ basis.

We recall that for $1\leqslant \xi<\omega_1$, a sequence $(x_n)_{n=1}^\infty $ in some Banach space $X$ is called an $\ell_1^\xi$-\emph{spreading model} if $(x_n)_{n=1}^\infty$ is bounded and $$0< \inf\Bigl\{\|x\|: F\in \mathcal{S}_\xi, x=\sum_{n\in F}a_nx_n, \sum_{n\in F}|a_n|=1\Bigr\}.$$

Given a Banach space $X$ and a natural number $l$, let $$w_l(X)= \underset{(x_n)_{n=1}^\infty\in B_{c_0^w(X)}}{\sup} \inf\Bigl\{\|\sum_{n\in A} a_n x_n\|: A\subset \nn, |A|=l, \sum_{n\in A}|a_n|=1\Bigr\}.$$  Of course, $w_l(X)\leqslant 1$. By a standard James-type blocking argument, for all $l,m\in\nn$, $w_{lm}(X)\leqslant w_l(X)w_m(X)$.  From this and further standard arguments, it follows that either $w_l(X)=1$ for all $l\in\nn$ or for some $0<\delta$, $w_l(X)=O(l^{-\delta})$.  Furthermore, any Banach space $X$ which admits a weakly null $\ell_1^1$-spreading model satisfies $w_l(X)=1$ for all $l\in\nn$. However, the converse is not true. For example, the space $X_\text{OS}$ of Odell and Schlumprecht \cite{OS} which admits no $\ell_1^1$ spreading model is the completion of $c_{00}$ with respect to the implicitly defined norm  $$\|x\|_{\text{OS}}= \max \Bigl\{\|x\|_{c_0}, \Bigl(\sum_{i=1}^\infty \|x\|_{n_i}^2\Bigr)^{1/2}\Bigr\},$$ where $$\|x\|_{n_i}= \sup\Bigl\{\frac{1}{\log_2(n_i+1)} \sum_{j=1}^{n_i} \|I_jx\|_{\text{OS}}: I_1<\ldots <I_{n_i}\Bigr\}$$ and $(n_i)_{i=1}^\infty$ is a specifically chosen, very fast growing sequence of natural numbers.   Here $Ix$ is the projection of $x$ onto $\text{span}\{e_i: i\in I\}$.   This space $X_{\text{OS}}$ is reflexive and has no $\ell_1$ spreading model. However, it follows from the definition of the norm that for any $k\in\nn$ and any normalized block sequence  $(x_i)_{i=1}^{n_k}$ in $X_\text{OS}$, and any scalars $(a_i)_{i=1}^{n_k}$,  $$\|\sum_{i=1}^{n_k}a_ix_i\|_{\text{OS}} \geqslant \frac{\sum_{i=1}^{n_k} |a_i|}{\log_2(n_k+1)}.$$  Therefore $w_{n_i}(X_\text{OS})\geqslant \frac{1}{\log_2(n_i+1)}$ for all $i\in\nn$.  From this it follows that there cannot exist $0<\delta$ such that $w_l(X_\text{OS})=O(l^{-\delta})$, and $w_l(X_\text{OS})=1$ for all $l\in\nn$.

We now summarize the preceding discussion for later reference, which will be our tool for verifying that the spaces in our later examples only satisfy the hypothesis of Theorem \ref{Freeman} in the trivial case in which $(v_n)_{n=1}^\infty$ is equivalent to the canonical $\ell_1$ basis.

\begin{rem}\upshape Let $(v_n)_{n=1}^\infty$ be a seminormalized Schauder basis.  If $X$ is a Banach space in which $w_l(X)=1$ for all $l\in\nn$ and every normalized, weakly null sequence in $X$ has a subsequence dominated by $(v_n)_{n=1}^\infty$, then $(v_n)_{n=1}^\infty$ is equivalent to the canonical $\ell_1$ basis. Indeed, by Theorem \ref{Freeman}, there exists a constant $C$ such that any $(x_n)_{n=1}^\infty \in B_{c_0^w(X)}$ has a subsequence which is $C$-dominated by $(v_n)_{n=1}^\infty$.    But we can find for each $l\in\nn$ a sequence $(x_n)_{n=1}^\infty \in B_{c_0^w(X)}$ such that for any $A$ with $|A|=l$ and scalars $(a_n)_{n\in A}$ with $\sum_{n\in A}|a_n|=1$, $\|\sum_{n\in A} a_nx_n\|\geqslant 1/2$. Then for some subsequence $(y_n)_{n=1}^\infty$ of $(x_n)_{n=1}^\infty$ which is $C$-dominated by $(v_n)_{n=1}^\infty$ and any scalars $(a_n)_{n=1}^l$ with $\sum_{n=1}^l |a_n|=1$, $$\frac{1}{2C}\leqslant \frac{1}{C}\|\sum_{n=1}^l a_ny_n\| \leqslant \|\sum_{n=1}^l a_nv_n\|.$$  Since this holds for any $l\in\nn$, and since $(v_n)_{n=1}^\infty$ is bounded,  $(v_n)_{n=1}^\infty$ is equivalent to the canonical $\ell_1$ basis. 

\label{tri}

\end{rem}

\begin{example}\upshape For any regular family $\mathcal{F}$ containing all singletons, we define the space $X_\mathcal{F}$ to be the completion of $c_{00}$ with respect to the norm $$\|x\|_{X_\mathcal{F}}=\sup_{F\in \mathcal{F}} \sum_{n\in F}|x(n)|=\sup_{F\in \mathcal{F}}\|Fx\|_{\ell_1}.$$   The canonical $c_{00}$  basis is a normalized, $1$-unconditional, and $1$-right dominant basis for $X_\mathcal{F}$. The $1$-right dominance is a consequence of the spreading property of $\mathcal{F}$.  We also note that the canonical basis is weakly null, which can be seen by isomorphically embedding $X_\mathcal{F}$ into $C(\mathcal{F})$ as $f_x(E)=\sum_{n\in E} a_n$, where $x=\sum_{n=1}^\infty a_ne_n$. Then $x\mapsto f_x$ is an isomorphic embedding of $X_\mathcal{F}$ into $C(\mathcal{F})$ which sends the canonical $X_\mathcal{F}$ basis to a bounded, pointwise null sequence in $C(\mathcal{F})$. Therefore the canonical $X_\mathcal{F}$ basis is weakly null. If $\text{rank}(\mathcal{F})>\omega$, then some subsequence of the basis of $X_\mathcal{F}$ is an $\ell_1^1$-spreading model.  Indeed, if $\text{rank}(\mathcal{F})>\omega$, then for any $l\in\nn$, $\mathcal{F}$ must contain some $F$ with $|F|\geqslant l$. But then by the spreading and hereditary properties of $\mathcal{F}$, $\mathcal{F}$ must contain every $l$-element set whose minimum is at least $\max F$, since any such set is  a subset of a spread of $F$. Therefore there exist $n_1<n_2<\ldots$ such that if $n_l\leqslant F$ and $|F|\leqslant l$, then $F\in \mathcal{F}$.   From this it follows that $(e_{n_l})_{l=1}^\infty$ is an isometric $\ell_1^1$-spreading model.  As noted in Remark \ref{tri}, $X_\mathcal{F}$ cannot satisfy any non-trivial version of Theorem \ref{Freeman}.

However, if $(x_n)_{n=1}^\infty$ is any normalized block sequence in $X_\mathcal{F}$, then $$(x_n)_{n=1}^\infty \leqslant_1 (e_{\max \text{supp}(x_n)})_{n=1}^\infty.$$  Indeed, fix scalars $(a_n)_{n=1}^\infty \in c_{00}$ and $F\in \mathcal{F}$ such that $$\|\sum_{n=1}^\infty a_nx_n\|_{X_\mathcal{F}}=\|F\sum_{n=1}^\infty a_nx_n\|_{\ell_1}.$$  Now if $A=\{n: Fx_n\neq 0\}$ and for each $n\in A$, we fix $p_n\in F\cap \text{supp}(x_n)$, then $G:=(\max \text{supp}(x_n))_{n\in A}$ is a spread of $(p_n)_{n\in A}\subset F$. Furthermore, $$\|F\sum_{n=1}^\infty a_nx_n\|_{\ell_1}\leqslant \sum_{n\in F}|a_n|= \|G\sum_{n=1}^\infty a_ne_{\max \text{supp}(x_n)}\|_{\ell_1}\leqslant \|\sum_{n=1}^\infty a_ne_{\max \text{supp}(x_n)}\|_{X_\mathcal{F}}.$$   

From this and a standard perturbation argument, for any $\ee>0$ and any normalized, weakly null sequence $(x_n)_{n=1}^\infty$ in $X_\mathcal{F}$, there exists a subsequence of $(x_n)_{n=1}^\infty$ which is $1+\ee$-dominated by a subsequence of the canonical basis of $X_\mathcal{F}$. 

\label{schreier}
\end{example}

\begin{example}\upshape Let $\mathcal{F}$, $\mathcal{G}$ be regular families containing all singletons and let $\xi, \zeta<\omega_1$ be such that  $\text{rank}(\mathcal{F})=\xi+1$ and $\text{rank}(\mathcal{G})=\zeta+1$. Assume also that  $\omega\leqslant \xi< \zeta \omega$. From this it follows that there exists $n\in\nn$ such that $\xi\leqslant \zeta n$.   Let $$\mathcal{H}= \{\varnothing\}\cup \Bigl\{\bigcup_{i=1}^t F_i: F_1<\ldots <F_t, \varnothing\neq F_i\in \mathcal{G}, t\leqslant n\Bigr\}.$$  Then $\text{rank}(\mathcal{H})=\zeta n+1\geqslant \text{rank}(\mathcal{F})$.    Now by \cite[Proposition $3.1$]{Con}, there exists $M\in[\nn]$ such that for any $E\in \mathcal{F}$, $M(E)\in \mathcal{H}$.  Therefore the basis $(e_i)_{i=1}^\infty$ of $X_\mathcal{F}$ is $1$-dominated by $(e_{M(i)})_{i=1}^\infty\subset X_\mathcal{H}$, which is $n$-dominated by $(e_{M(i)})_{i=1}^\infty \subset X_\mathcal{G}$.  Combining this with the previous example, any normalized, weakly null sequence in $X_\mathcal{F}$ has a subsequence which is $n+\ee$-dominated by a subsequence of the $X_\mathcal{G}$ basis.  

Taking $\mathcal{G}=\mathcal{S}_\xi$ for some $1\leqslant\xi<\omega_1$ and $\mathcal{F}=\mathcal{H}$ as defined in the preceding paragraph, and using known facts concerning the repeated averages hierarchy, the constant of $n$ as the infimum of $C$ such that every normalized, weakly null sequence in $X_\mathcal{F}$ is $C$-dominated by a subsequence of the $X_\mathcal{G}$ basis is sharp. 

\label{schreier2}
\end{example}

We recall that for $\xi<\omega_1$,  $X_\xi:=X_{\mathcal{S}_\xi}$ was already introduced in the proof of Proposition \ref{sc}. The spaces $X_\xi$, $\xi<\omega_1$, are the \emph{Schreier spaces}.

\begin{example} \upshape For $0\leqslant \xi<\omega_1$ and $1<p<\infty$, the Baernstein space $X_{\xi,p}$ is the completion of $c_{00}$ with respect to the norm $$\|x\|_{\xi,p}=\sup \Bigl\{\Bigl(\sum_{n=1}^\infty \|F_nx\|_{\ell_1}^p\Bigr)^{1/p}: F_1<F_2<\ldots, F_n\in \mathcal{S}_\xi\Bigr\}.$$ The spreading property of $\mathcal{S}_\xi$ and subsymmetry of $\ell_p$ yield that the canonical basis of $X_{\xi,p}$ is $1$-right dominant. For $0<\xi$, the basis of $X_{\xi,p}$ is a weakly null $\ell_1^1$-spreading model, and so $X_{\xi,p}$ cannot satisfy the hypotheses of Theorem \ref{Freeman} except in the trivial case in which $(v_n)_{n=1}^\infty$ is equivalent to the canonical $\ell_1$ basis.   It was shown in \cite{C2} that for any normalized block sequence $(x_n)_{n=1}^\infty$ in $X_{\xi,p}$, $(x_n)_{n=1}^\infty \leqslant_4 (g_{\max \text{supp}(x_n)})_{n=1}^\infty$. Therefore any normalized, weakly null sequence in $X_{\xi,p}$ has a subsequence $4+\ee$-dominated by a subsequence of the $X_{\xi,p}$ basis.

\end{example}

\begin{definition} Let us say a normalized basis $(e_n)_{n=1}^\infty$ for a Banach space $T$ is \emph{block stable} if for any integers  $0=m_0<m_1<\ldots$  and any normalized block sequence $(x_n)_{n=1}^\infty$ in $T$ such that  $\text{supp}(x_n)\subset (m_{n-1}, m_n]$, then $(x_n)_{n=1}^\infty$ is equivalent to $(e_{m_n})_{n=1}^\infty$. A standard gliding hump argument shows that if $(e_n)_{n=1}^\infty$ is a block stable basis for $T$, then there exists a constant $B$ such that for any integers  $0=m_0<m_1<\ldots$  and any normalized block sequence $(x_n)_{n=1}^\infty$ in $T$ such that  $\text{supp}(x_n)\subset (m_{n-1}, m_n]$,  $(x_n)_{n=1}^\infty$ is $B$-equivalent to $(e_{m_n})_{n=1}^\infty$. Clearly any normalized, weakly null sequence in $T$ has a subsequence $B+\ee$-dominated by a subsequence of the basis $(e_n)_{n=1}^\infty$. 

Formally speaking, block stability is a property of a given basis of the space $T$, and not of the space $T$ itself. However, if $T$ has a canonical basis, then we shall say $T$ is block stable if its canonical basis is. 

\end{definition}

\begin{rem}\upshape None of our preceding examples has been block stable.  This is because when $\text{rank}(\mathcal{F})>\omega$, $X_\mathcal{F}$ admits a copy of $c_0$, but the basis admits no subsequence equivalent to the canonical $c_0$ basis. Similarly,  for $0<\xi<\omega_1$, $X_{\xi,p}$ admits a copy of $\ell_p$, but the basis admits no subsequence equivalent to the canonical $\ell_p$ basis. Therefore when $\text{rank}(\mathcal{F})>\omega$ and $0<\xi<\omega_1$, every normalized block sequence in either $X_\mathcal{F}$ or $X_{\xi,p}$  admits a further block sequence which is dominated by some, but not equivalent to any,  subsequence of the basis.

\end{rem}

\begin{example}\upshape For $1\leqslant \xi<\omega_1$ and $0<\vartheta<1$, the Tsirelson space $T_{\xi,\vartheta}$ is the completion of $c_{00}$ with respect to the implicitly defined norm $$\|x\|=\max\Bigl\{\|x\|_{c_0}, \vartheta \sup\bigl\{\sum_{n=1}^t \|I_n x\|:  I_1<\ldots <I_t, (\min I_n)_{n=1}^t\in \mathcal{S}_\xi\bigr\}\Bigr\}.$$  The spreading property of $\mathcal{S}_\xi$ yields that the basis of $T_{\xi,\vartheta}$ is $1$-right dominant.  Furthermore, it was shown in \cite{CJT} that $T_{1,\frac{1}{2}}$ is block stable, and in \cite{LT} it was shown that $T_{\xi, \vartheta}$ for any $1\leqslant \xi<\omega_1$ and $0<\vartheta<1$. An easy duality argument yields that $T_{\xi,\vartheta}^*$ is also block stable. It is easy to see that every normalized block sequence in $T_{\xi, \vartheta}$ is an $\ell_1^1$-spreading model. Indeed, if $(x_n)_{n=1}^\infty$ is a normalized block sequence in $T_{\xi, \vartheta}$ and $I_1<I_2<\ldots$ are intervals such that $\text{supp}(x_n)\subset I_n$, and if $F\in\mathcal{S}_1\subset \mathcal{S}_\xi$, then for any scalars $(a_n)_{n\in F}$, $$\|\sum_{n\in F}a_nx_n\|\geqslant \vartheta \sum_{i\in F}\|I_i \sum_{n\in F} a_n\|\geqslant \vartheta\sum_{n\in F} \|a_nx_n\|=\vartheta\sum_{n\in F}|a_n|.$$     Now block stability implies that every normalized block sequence in $T_{\xi, \vartheta}$ is dominated by (and actually equivalent to) a subsequence of the basis.  Thus the space $X=T_{\xi, \vartheta}$ for $0<\xi<\omega_1$ and $0<\vartheta<1$ with the basis $(g_n)_{n=1}^\infty=(e_n)_{n=1}^\infty$ satisfy Theorem \ref{main} but do not satisfy Theorem \ref{Freeman} for any non-trivial $(v_n)_{n=1}^\infty$. 

We also note that for any $1<p<\infty$, the canonical basis of the $p$-convexification $T^{(p)}_{\xi,\vartheta}$ has the properties of $1$-right dominance and  block stability, from which it follows that  any normalized block sequence in $T_{\xi,\vartheta}^{(p)}$ is dominated by a subsequence of $(e_n)_{n=1}^\infty$. In this case, every normalized block sequence in $T^{(p)}_{\xi,\vartheta}$ is dominated by the $\ell_p$ basis with constant $1$. However, since every subsequence of the $T^{(p)}_{\xi,\vartheta}$ basis is $1$-dominated by and not equivalent to the $\ell_p$ basis, the property that every normalized, weakly null sequence in $T^{(p)}_{\xi, \vartheta}$ has a subsequence dominated by some subsequence of the basis of $T^{(p)}_{\xi, \vartheta}$ is a strictly stronger property than the property that every normalized, weakly null sequence in $T^{(p)}_{\xi, \vartheta}$ has a subsequence dominated by the canonical $\ell_p$ basis.  One can see with this example that, if $(v_n)_{n=1}^\infty$ is any seminormalized basis such that any normalized, weakly null sequence in $T_{\xi, \vartheta}^{(p)}$ has a subsequence dominated by $(v_n)_{n=1}^\infty$ (that is, if $T_{\xi, \vartheta}^{(p)}$ and $(v_n)_{n=1}^\infty$ satisfy the hypotheses of Freeman's theorem), then there exists $C$ such that $\|\sum_{n=1}^\infty a_nv_n\|\geqslant \bigl(\sum_{n=1}^\infty |a_n|^p\bigr)^{1/p}/C$ for all $(a_n)_{n=1}^\infty\in c_{00}$. 

We note that every normalized block sequence in $T_{\xi, \vartheta}^*$ is also dominated by a subsequence of the basis of $T^*_{\xi, \vartheta}$. Since the basis of $T_{\xi, \vartheta}^*$ is $1$-left dominant, any sequence dominated by a subsequence of the $T_{\xi, \vartheta}^*$ basis is also dominated by the $T_{\xi, \vartheta}^*$ basis. The spaces $T_{\xi, \vartheta}^*$ were among the examples given by Freeman to show that Theorem \ref{Freeman} is a genuine extension of Theorem \ref{KO}.

\end{example}

\begin{example}\upshape It was shown in \cite{CN} that if $X$ is a separable Banach space, then the Szlenk index of $X$ fails to exceed $\omega^\xi$ if and only if there exists a Banach space $G$ having Szlenk index not exceeding $\omega^\xi$ and having a normalized, $1$-unconditional, $1$-right dominant basis $(g_n)_{n=1}^\infty$, a constant $C$, a Markushevich basis $(x_n)_{n=1}^\infty$ for $X$, and integers $0=k_0<k_1<\ldots$ such that, with $E_n=\text{span}\{x_i: k_{n-1}<i\leqslant k_n\}$ such that, for any $0=r_0<r_1<\ldots$ and $(u_n)_{n=1}^\infty \in B_X^\nn\cap \prod_{n=1}^\infty \text{span}\{F_i: r_{n-1}<i\leqslant r_n\}$, $(u_n)_{n=1}^\infty\leqslant_C (g_{r_n})_{n=1}^\infty$.     From this it follows that if $X$ is a separable Banach space $X$ with Szlenk index $\omega^\xi$, then with $G$ as above, every normalized, weakly null sequence in $X$ has a subsequence dominated by a subsequence of $(g_n)_{n=1}^\infty$.    If $\xi=1$, then $X$, being separable with Szlenk index $\omega$, must be $p$-asymptotically uniformly smoothable for some $1<p<\infty$, and every normalized, weakly null sequence in $X$ has a subsequence dominated by the $\ell_p$ basis. This yields that in the $\xi=1$ case, such a space $X$ must satisfy the hypothesis of Theorem \ref{Freeman}, and in fact Theorem \ref{KO}, for the basis of some $\ell_p$ space with $1<p<\infty$.  However, for $1<\xi<\omega_1$, and $\xi\notin \{\omega^\eta: \eta\text{\ is a limit ordinal}\}$, there is a Banach space $X$ with Szlenk index $\omega^\xi$ admitting a weakly null $\ell_1^1$-spreaidng model. Therefore for some normalized, bimonotone, $1$-right dominant basis $(g_n)_{n=1}^\infty$ for a space $G$ having Szlenk $\omega^\xi$, $R=c_0^w(X)$ and $(g_n)_{n=1}^\infty$ satisfy the hypotheses of Theorem \ref{main}, but $X$ cannot satisfy the hypotheses of Theorem \ref{main} for any non-trivial basis $(v_n)_{n=1}^\infty$.

\label{Szlenk}
\end{example}

\begin{definition} Given a Banach space $X$, an ordinal $0<\xi<\omega_1$, and a weakly null sequence $(x_n)_{n=1}^\infty$ in $X$, we say $(x_n)_{n=1}^\infty$ is $\xi$-\emph{weakly null} if it does not have a subsequence which is an $\ell_1^\xi$-spreading model.  Given a sequence $(x_n)_{n=1}^\infty$ and $\ee>0$, we let $$\mathfrak{F}_\ee((x_n)_{n=1}^\infty)= \Bigl\{F\in[\nn]^{<\omega}:(\exists x^*\in B_{X^*})(\forall n\in F)(|x^*(x_n)|\geqslant \ee)\Bigr\}.$$   We note that, as was shown in \cite{CN}, a given weakly null sequence $(x_n)_{n=1}^\infty$ is $\xi$-weakly null if and only if for any $M\in[\nn]$ and $\ee>0$, there exists $N\in [M]$ such that the Cantor-Bendixson index of $\mathfrak{F}_\ee((x_n)_{n=1}^\infty) \upp N$ is less than $\omega^\xi$. 

\label{st}

\end{definition}

\begin{proposition} For $1\leqslant \xi<\omega_1$ and $\xi$-weakly null sequence $(x_n)_{n=1}^\infty\subset B_X$ in the Banach space $X$, for any $\ee>0$,  $(x_n)_{n=1}^\infty$ admits a subsequence which is $1+\ee$-dominated by a subsequence of the canonical basis $(g_n)_{n=1}^\infty$ of the Schreier space $X_\xi$.

\label{wn}
\end{proposition}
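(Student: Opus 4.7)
The plan is to combine the $\xi$-weakly null hypothesis (in the CB-index form of Definition \ref{st}) with Gasparis's dichotomy (Theorem \ref{gasp}) to trap every $\ee$-level set of an arbitrary norming functional inside the Schreier family $\mathcal{S}_\xi$, and then read off the domination bound by slicing the action of a Hahn-Banach functional along those level sets.

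First I would fix $\ee>0$ and pick a geometrically decreasing sequence $1=\ee_0>\ee_1>\ee_2>\cdots\downarrow 0$ with $\sum_{k\geqslant 1}\ee_k$ as small as I like compared to $\ee$, then recursively build nested $M_1\supset M_2\supset\cdots$ in $[\nn]$ with $\mathfrak{F}_{\ee_k}((x_n))\uup M_k\subset \mathcal{S}_\xi$. To pass from $M_{k-1}$ to $M_k$, the $\xi$-weakly null hypothesis gives $L_k\in[M_{k-1}]$ on which $\mathfrak{F}_{\ee_k}((x_n))$ has CB-index strictly less than $\omega^\xi$, and then Gasparis's dichotomy applied to the hereditary pair $(\mathfrak{F}_{\ee_k}((x_n)),\mathcal{S}_\xi)$ on $L_k$ yields the desired $M_k\in[L_k]$; the alternative $\mathcal{S}_\xi\uup M_k\subset\mathfrak{F}_{\ee_k}((x_n))$ would force rank at least $\omega^\xi+1$ on $\mathfrak{F}_{\ee_k}((x_n))\uup M_k$ (since the spreading map $F\mapsto M_k(F)$ tree-embeds $\mathcal{S}_\xi$ into $\mathcal{S}_\xi\uup M_k$), contradicting the CB-index bound. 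Diagonalize $n_k\in M_k$ with $n_1<n_2<\cdots$, so that for every $k$ the tail $\{n_j:j\geqslant k\}$ sits inside $M_k$, set $y_n=x_{n_n}$, and propose $(g_{n_n})_{n=1}^\infty$ as the dominating subsequence.

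To verify $(y_n)\leqslant_{1+\ee}(g_{n_n})$, fix a finitely supported $(a_n)$, put $y=\sum a_n y_n$, pick $x^*\in B_{X^*}$ with $x^*(y)=\|y\|$, and slice by the disjoint level sets $A_k=\{n:\ee_k<|x^*(y_n)|\leqslant \ee_{k-1}\}$, $k\geqslant 1$; each $A_k$ lies in $\mathfrak{F}_{\ee_k}((y_n))$. Split $A_k=A_k^g\cup A_k^b$ with $A_k^g=A_k\cap[k,\infty)$. On the good side $\{n_j:j\in A_k^g\}\subset M_k$ lies in $\mathfrak{F}_{\ee_k}((x_n))\uup M_k\subset \mathcal{S}_\xi$, and reading the Schreier norm of the subsequence $(g_{n_n})$ at the witness set $\{n_j:j\in A_k^g\}\in \mathcal{S}_\xi$ gives $\sum_{j\in A_k^g}|a_j|\leqslant \|\sum a_n g_{n_n}\|_{X_\xi}$.

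The main obstacle is the bad part $A_k^b$, which the diagonalization does not control. The decisive observation is that $j\in A_k^b$ forces $k>j$ and hence $|x^*(y_j)|\leqslant \ee_{k-1}\leqslant \ee_j$, so with each $j$ contributing to at most one $A_k^b$,
$$\sum_{k\geqslant 1}\ee_{k-1}\sum_{j\in A_k^b}|a_j|\leqslant \sum_j \ee_j|a_j|\leqslant \Bigl(\sum_{j\geqslant 1}\ee_j\Bigr)\|\sum a_n g_{n_n}\|_{X_\xi},$$
using $\max_j|a_j|\leqslant \|\sum a_n g_{n_n}\|_{X_\xi}$ (which holds since singletons are in $\mathcal{S}_\xi$ and $(g_n)$ is normalized). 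Weighting the good estimate by $\ee_{k-1}$ and using $\sum_{k\geqslant 1}\ee_{k-1}=1+\sum_{k\geqslant 1}\ee_k$ yields
$$\|y\|\leqslant \Bigl(1+2\sum_{k\geqslant 1}\ee_k\Bigr)\|\sum a_n g_{n_n}\|_{X_\xi},$$
and the constant is at most $1+\ee$ provided $\sum_{k\geqslant 1}\ee_k\leqslant \ee/2$.
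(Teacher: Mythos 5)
Your proof is correct and follows essentially the same route as the paper's: Gasparis's dichotomy plus the Cantor--Bendixson characterization of $\xi$-weak nullity to trap $\mathfrak{F}_{\ee_k}\upp M_k$ inside $\mathcal{S}_\xi$, a diagonal subsequence, and then slicing a norming functional by level sets split into a short ``head'' and an $\mathcal{S}_\xi$-controlled ``tail.'' The only (harmless) difference is in the head estimate: the paper bounds the at most $k$ exceptional indices by counting, paying a factor $\sum_k k\phi^{k-1}=(1-\phi)^{-2}$, whereas you absorb them via $|x^*(y_j)|\leqslant\ee_j$, which gives a marginally cleaner constant.
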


\begin{proof} Fix $\ee>0$ and  $0<\phi<1$ such that $(1-\phi)^2(1+\ee)>1$.      Using the note at the end of the definition of $\xi$-weakly null, we may apply Theorem \ref{gasp} to recursively select $M_1\supset M_2\supset \ldots$ such that for each $k\in\nn$, either $\mathcal{S}_\xi\upp M_k\subset \mathfrak{F}_{\phi^k}((x_n)_{n=1}^\infty)$ or $\mathfrak{F}_{\phi^k}((x_n)_{n=1}^\infty)\upp M_k\subset \mathcal{S}_\xi$.   Note that  the Cantor-Bendixson index of $\mathcal{S}_\xi\upp N$ is $\omega^\xi+1$ for all $N\in [\nn]$, from which it follows that for each $k\in\nn$, $\mathfrak{F}_{\phi^k}((x_n)_{n=1}^\infty)\upp M_k \subset \mathcal{S}_\xi$. Otherwise for every $N\in [M_k]$, the Cantor-Bendixson index of $\mathfrak{F}_{\phi^k}((x_n)_{n=1}^\infty)\upp N$ would exceed $\omega^\xi$, contradicting $\xi$-weak nullity of $(x_n)_{n=1}^\infty$.  

Fix $M(1)<M(2)<\ldots$ with $M(k)\in M_k$.    For any $(a_n)_{n=1}^\infty\in c_{00}$, we can fix $x^*\in B_{X^*}$ such that $$\|\sum_{n=1}^\infty a_n x_{M(n)}\|=\text{Re\ }x^*\Bigl(\sum_{n=1}^\infty a_nx_{M(n)}\Bigr).$$   For each $k\in\nn$, let $$A_k=\{k<n: |x^*(x_{M(n)})|\in (\phi^k, \phi^{k-1}]\}$$ and $$B_k=\{k\geqslant n: |x^*(x_{M(n)})|\in (\phi^k, \phi^{k-1}]\}.$$  By our choices above, $(M(n))_{n\in B_k}\in \mathcal{S}_\xi$.     Using the fact that the basis of $X_\xi$ is normalized and $1$-unconditional, \begin{align*} \|\sum_{n=1}^\infty a_n x_{M(n)}\| & = \text{Re\ }x^*\Bigl(\sum_{n=1}^\infty a_nx_{M(n)}\Bigr) \leqslant \sum_{k=1}^\infty \phi^{k-1}\sum_{n\in A_k\cup B_k} |a_n| \\ & \leqslant \sum_{k=1}^\infty \phi^{k-1}\Bigl(|A_k|+\|\sum_{n\in B_k}a_n g_{M(n)}\|_{X_\xi}\Bigr) \\ & \leqslant \sum_{k=1}^\infty \|\sum_{n=1}^\infty a_ng_{M(n)}\|_{X_\xi}  k\phi^{k-1}=\frac{1}{(1-\phi)^2} \|\sum_{n=1}^\infty a_n g_{M(n)}\|_{X_\xi} \\ & \leqslant (1+\ee) \|\sum_{n=1}^\infty a_n g_{M(n)}\|_{X_\xi}. \end{align*}

\end{proof}

\begin{example}\upshape For $1\leqslant \xi<\omega_1$, any Banach space $X$ which admits no weakly null $\ell_1^\xi$-spreading model (that is, any Banach space with the $\xi$-\emph{weak Banach-Saks property}), has the property that  every normalized, weakly null sequence in $X$ has a subsequence $1+\ee$-dominated by the basis of $X_\xi$.  This is an immediate consequence of the preceding proposition, once we note that in any such space, any normalized, weakly null sequence is $\xi$-weakly null.  The Odell-Schlumprecht space $X_\text{OS}$ is $1$-weak Banach-Saks, yielding an example of a space having the property that every normalized, weakly null sequence in $X_\text{OS}$ has a subsequence $1+\ee$-dominated by a subsequence of the $X_1$ basis, but to which Theorem \ref{Freeman} does not apply for any non-trivial $(v_n)_{n=1}^\infty$.

\label{Schreier again}
\end{example}

Now for a Banach space $X$ and $0<\xi<\omega_1$, we let $c_0^\xi(X)$ denote the space of $\xi$-weakly null sequences in $X$. Endowed with the $\ell_\infty(X)$ norm, $c_0^\xi(X)$ is a closed subspace of $c_0^w(X)$, and is therefore a closed subspace of $\ell_\infty(X)$.  To see that $c_0^\xi(X)$ is closed, we simply observe that its complement in $c_0^w(X)$ is open.  To see this, note that if $(x_n)_{n=1}^\infty\in c_0^w(X)\setminus c_0^\xi(X)$, there exist $\ee>0$ and $M\in[\nn]$ such that $$2\ee\leqslant \inf\Bigl\{\|\sum_{n\in F} a_n x_{M(n)}\|: F\in\mathcal{S}_\xi, \sum_{n\in F}|a_n|=1\Bigr\}.$$ By the triangle inequality,  if $(y_n)_{n=1}^\infty \in c_0^w(X)$ is such that  $\|(y_n)_{n=1}^\infty - (x_n)_{n=1}^\infty\|_{\ell_\infty(X)}<\ee$, then $$\ee\leqslant \inf\Bigl\{\|\sum_{n\in F} a_n y_{M(n)}\|: F\in\mathcal{S}_\xi, \sum_{n\in F}|a_n|=1\Bigr\},$$ and $(y_n)_{n=1}^\infty\in c_0^w(X)\setminus c_0^\xi(X)$. 

It follows immediately from Definition \ref{st} that if $R=c_0^\xi(X)$ and $\|\cdot\|_R=\|\cdot\|_{\ell_\infty(X)}$, then $R$ satisfies all of the requirements stated in the introduction required for Theorem \ref{main} to apply.

\begin{example}\upshape The details of this example can be found in \cite{C}, wherein a detailed study of weak nullity of sequences in $X_\xi$ was undertaken.    Fix $0<\xi<\omega_1$ and let $\xi=\omega^{\ee_0}+\omega^{\ee_1}+\ldots +\omega^{\ee_{l-1}}$, where $l\in\nn$ and $\ee_0\geqslant \ldots \geqslant \ee_{l-1}$. Note that if $l=1$, $\xi=\omega^{\ee_0}$.  It is known, by utilizing the Cantor normal form of $\xi$, that $\xi$ admits such a representation. Furthermore, this representation is unique. For $0\leqslant i\leqslant l$, let $$\lambda_i= \omega^{\ee_0}+\ldots +\omega^{\ee_{i-1}}$$ and $$\rho_i=\omega^{\ee_i}+\ldots +\omega^{\ee_{l-1}},$$ where $\lambda_0=\rho_l=0$ by convention.  Note that for each $0\leqslant i\leqslant l$, $\lambda_i+\rho_i=\xi$.    In particular, $\lambda_l=\rho_0=\xi$. 

It was shown in \cite{C} that if $(x_n)_{n=1}^\infty$ is any seminormalized, weakly null sequence in $X_\xi$, then there exists a unique $0\leqslant i\leqslant l$ such that $(x_n)_{n=1}^\infty$ is $\rho_i+1$-weakly null and admits a subsequence equivalent to some subsequence of the canonical basis of $X_{\rho_i}$. Since the basis of $X_{\rho_i}$, and all of its subseqences, are $\ell_1^{\rho_i}$ spreading models, the latter condition means that the sequence $(x_n)_{n=1}^\infty$ is not $\rho_i$-weakly null. This yields that for any seminormalized, weakly null sequence  $(x_n)_{n=1}^\infty$ in $X_\xi$, there exists a unique $0\leqslant i\leqslant l$ such that $(x_n)_{n=1}^\infty$ is $\rho_i+1$-weakly null and not $\rho_i$-weakly null.  Furthermore, for each $0\leqslant i\leqslant l$, there exists a normalized, weakly null sequence $(x_n)_{n=1}^\infty$ in $X_\xi$ equivalent to a subsequence of the $X_{\rho_i}$ basis.  From this it follows that the hierarchy of weakly null sequences in $X_\xi$ is completely given in the list \begin{align*} c_0(X_\xi) & \subsetneq c_0^1(X)= c_0^{\rho_l+1}(X_\xi) = c_0^{\rho_{l-1}}(X_\xi) \subsetneq c_0^{\rho_{l-1}+1}(X_\xi)=c_0^{\rho_{l-2}}(X_\xi)\subsetneq \ldots  \\ &  \subsetneq c_0^{\rho_1+1}(X_\xi)= c_0^\xi(X_\xi)\subsetneq c_0^{\rho_0+1}(X_\xi)=c_0^{\xi+1}(X_\xi)=c_0^w(X_\xi).\end{align*} 

For $0\leqslant i\leqslant l$, let  $(g^i_n)_{n=1}^\infty$ denote the canonical basis of $X_{\rho_i}$. Then $(g^i_n)_{n=1}^\infty$ is normalized, $1$-unconditional, and $1$-right dominant.   Furthermore, with $R=c_0^{\rho_i+1}(X_\xi)$, every member of $R$ admits a subsequence dominated by a subsequence of $(g^i_n)_{n=1}^\infty$. Therefore there exists a constant $C$ such that any member of $B_{c_0^{\rho_i+1}(X_\xi)}(X_\xi)$ has a subsequence $C$-dominated by a subsequence of $(g^i_n)_{n=1}^\infty$. Furthermore, any member $(x_n)_{n=1}^\infty$ of $c_0^{\rho_i+1}(X_\xi)\setminus c_0^{\rho_i}(X_\xi)$ has a subsequence equivalent to the canonical $X_{\rho_i}$ basis. If $i<l$, $(x_n)_{n=1}^\infty$ has a subsequence which is an $\ell_1^1$-spreading model, and therefore cannot satisfy the hypotheses of Theorem \ref{Freeman} except in the trivial case in which $(v_n)_{n=1}^\infty$ is equivalent to the canonical $\ell_1$ basis. Still under the condition $i<l$, $(x_n)_{n=1}^\infty \in c_0^{\rho_i+1}(X_\xi)\setminus c_0^{\rho_i}(X_\xi)$ has a subsequence equivalent to a subsequence of $(g^i_n)_{n=1}^\infty$, and this subsequence does not admit any further subsequence dominated by a subsequence of $(g^j_n)_{n=1}^\infty$ for any $i<j\leqslant l$.

\label{last2}
\end{example}


\begin{thebibliography}{HD}

\normalsize
\baselineskip=17pt

\bibitem{AMT}  S. A. Argyros, S. Mercourakis,  A. Tsarpalias, \emph{ Convex unconditionality and summability
of weakly null sequences}, Israel J. Math. \textbf{107}:157-193, 1998.


\bibitem{C2} R.M. Causey, \emph{Estimation of the Szlenk index of reflexive Banach spaces using generalized Baernstein spaces}, Fund. Math., 228:153-171, 2015. 


\bibitem{Con} R.M. Causey, \emph{Concerning the Szlenk index}, Studia Math., 236:201-244, 2017. 


\bibitem{C} R.M. Causey, \emph{The $\xi,\zeta$-Dunford-Pettis Property}, submitted. 

\bibitem{CN} R.M. Causey, K. Navoyan, \emph{Factorization of Asplund operators}, J. Math. Anal. Appl., DOI 10.1016/j.jmaa.2019.06.081. 

\bibitem{CJT}  P. G. Casazza, W. B. Johnson, and L. Tzafriri, \emph{ On Tsirelson's space},  Israel J. Math., 47(2-3):81-98, 1984.

\bibitem{F} D. Freeman, \emph{Weakly null sequences with upper estimates}, Studia Math., 184(1): 79-102, 2008. 

\bibitem{G} I. Gasparis, \emph{A dichotomy theorem for subsets of the power subsets of
the power set of the natural numbers}, Proc. Amer. Math. Soc., 129:759-764, 2001.


\bibitem{KO2} H. Knaust and E. Odell, \emph{On $c_0$-sequences in Banach spaces}, Isreal J. Math. 67:153-169, 1989.

\bibitem{KO} H. Knaust and E. Odell, \emph{Weakly null sequences with upper $\ell_p$ estimates}, pp.85-107, in:
``Functional Analysis, Proceedings, The University of Texas at Austin, 1987-89,'' E. Odell,
H. Rosenthal (eds.), Lecture Notes in Mathematics 1470, Springer-Verlag, Berlin 1991.

\bibitem{LT}  D. H. Leung and W.K. Tang, \emph{The Bourgain $\ell_1$-index of mixed Tsirelson space},  J.
Funct. Anal., 199(2):301-331, 2003.

\bibitem{OS}  E. Odell, T. Schlumprecht, \emph{On the richness of the set of $p$'s in Krivine's theorem}, Geometric aspects of functional analysis (Israel, 1992-1994), volume 77 of \emph{Oper. Theory Adv. Appl.}, 177-198. Birkh\"{a}user, Basel, (1995).


\bibitem{T} B. S. Tsirelson, \emph{Not every Banach space contains $\ell_p$  or $c_0$}, Funct. Anal. Appl., 8:138-141,
1974




\end{thebibliography}
\end{document}